\theoremstyle{plain}
\newtheorem{thm}{Theorem}[section]
\newtheorem*{thm*}{Theorem}
\newtheorem{lem}[thm]{Lemma}
\newtheorem{prop}[thm]{Proposition}
\newtheorem{cor}[thm]{Corollary}
\newtheorem{conj}[thm]{Conjecture}
\theoremstyle{definition}
\newtheorem{defn}[thm]{Definition}
\newtheorem{ex}[thm]{Example}
\theoremstyle{remark}
\newtheorem{rmk}[thm]{Remark}
\newcommand{\CP}{\mathbb{P}}
\newcommand{\C}{\operatorname{\mathbb{C}}}
\newcommand{\pt}{{\{ \mbox{pt.} \}}}
\newcommand{\Span}[1]{\langle #1 \rangle}
\newcommand{\Sym}{\operatorname{Sym}}
\newcommand{\into}{\hookrightarrow}
\newcommand{\iso}{{\widetilde\longrightarrow}}
\DeclareMathOperator{\Hom}{Hom}
\newcommand{\suml}{\sum\limits}
\newcommand{\prodl}{\prod\limits}
\renewcommand{\dim}{\operatorname{\rm dim}}
\newcommand{\codim}{\operatorname{\rm codim}}
\newcommand{\supp}{\operatorname{\sf supp}}
\newcommand{\id}{\rm id}
\newcommand{\rk}{\operatorname{{\rm rk}}}
\newcommand{\sH}{\mathcal{H}}  
\newcommand{\VD}{\mathcal{D}}  
\newcommand{\Chi}{\ensuremath{ \raisebox{0.4ex}{$\chi$} }}
\newcommand{\abs}[1]{\lvert #1 \rvert}
\newcommand{\csm}{c_{\text{\rm SM}}}
\newcommand{\cm}{c_{\text{\rm M}}}
\newcommand{\CSM}{\csm}
\newcommand{\CM}{\cm}
\newcommand{\XT}{\widetilde{X}}
\newcommand{\cS}{\mathcal{S}}
\newcommand{\GL}{\operatorname{GL}}
\newcommand{\GR}[2]{Gr(#1, \C^{#2})}
\newcommand{\GRb}[2]{Gr(#1, #2)}
\newcommand{\Grass}{\prod_{i=1}^m \GRb{k_i}{V}}
\renewcommand{\P}[2]{P_{#1, #2}}
\newcommand{\Dual}{\operatorname{\mathbb{D}}}
\newcommand{\TDual}{\operatorname{\widetilde{\mathbb{D}}}}
\newcommand{\dab}{d_{\alpha, \beta}}
\newcommand{\xa}{{X_\alpha}}
\newcommand{\xac}{{X^\circ_\alpha}}
\newcommand{\za}{{Z_\alpha}}
\newcommand{\xb}{{X_\beta}}
\newcommand{\xbc}{{X^\circ_\beta}}
\newcommand{\zb}{{Z_\beta}}
\newcommand{\pbs}{(\pi_\beta)_*}
\newcommand{\pb}{\pi_\beta}
\newcommand{\eab}{e_{\alpha, \beta}}
\newcommand{\pas}{(\pi_\alpha)_*}
\newcommand{\pa}{\pi_\alpha}
\newcommand{\zap}{Z_{\alpha, s}}
\newcommand{\pap}{\pi_{\alpha, s}}
\newcommand{\fl}{\operatorname{Fl}}
\newcommand{\pr}{\operatorname{pr}}
\newcommand{\leftclass}[1]{ s_{r_{#1} \times (k_{#1}-l_{#1})}( \bU_{#1-1}/\bU_{#1} ) }
\newcommand{\V}{V}
\newcommand{\sleft}{\mbox{\tiny left}}
\newcommand{\sright}{\mbox{\tiny right}}
\newcommand{\Path}[2]{\xymatrix{ #1 \ar@{-->}[r] & #2}}
\newcommand{\cont}{\ar@{>->}}
\newcommand{\bF}{\underline{F}}
\newcommand{\bC}{\underline{\C}}
\newcommand{\bU}{\underline{U}}
\newcommand{\bS}{\underline{S}}
\newcommand{\bQ}{\underline{Q}}
\newcommand{\bE}{\underline{E}}
\newcommand{\bA}{\underline{A}}
\newcommand{\bB}{\underline{B}}
\newcommand{\bV}{\underline{V}}
\newcommand{\blW}{\underline{W}^{\mbox{\tiny \sleft}}}
\newcommand{\brW}{\underline{W}^{\mbox{\tiny \sright}}}
\newcommand{\lW}{{W}^{\mbox{\tiny \sleft}}}
\newcommand{\rW}{{W}^{\mbox{\tiny \sright}}}
\newcommand{\sF}{\mathcal{F}^\bullet}
\newcommand{\IC}[1]{\mathcal{IC}^\bullet_{#1}}
\newcommand{\one}{1\hskip-4pt1}
\newcommand{\tCSM}{Chern-Schwartz-MacPherson }
\newcommand{\tCM}{Chern-Mather }
\begin{document}

\title{SINGULAR CHERN CLASSES OF SCHUBERT \\
VARIETIES VIA SMALL RESOLUTION}            

\author[Benjamin F Jones]{Benjamin F Jones}          
\address{Department of Mathematics, University of Georgia, Athens, GA 30602-3704}
\email{bjones@math.uga.edu}
\subjclass{primary 14C17, secondary 14M15}
\keywords{Schubert variety, singular Chern class, small resolution}

\begin{abstract}
  We discuss a method for calculating the \tCSM (CSM) class of a
  Schubert variety in the Grassmannian using small resolutions
  introduced by Zelevinsky. As a consequence, we show how to compute
  the \tCM class and local Euler obstructions using small resolutions
  instead of the Nash blowup. The algorithm obtained for CSM classes
  also allows us to prove new cases of a positivity conjecture of
  Aluffi and Mihalcea. 
\end{abstract}

\maketitle

\tableofcontents

\section{Introduction}
\label{sec:intro}

\subsection{Schubert varieties and singular Chern classes}
\label{subsec:intro-schub-and-csm}

In this paper we consider Schubert varieties in the Grassmannian and
their singular Chern classes. A Schubert variety in the Grassmannian
$\GRb{k}{V}$ parametrizes the $k$-dimensional subspaces of $V \cong \C^n$ which satisfy certain intersection conditions with a
fixed complete flag in $V$.  These varieties play an important role in
many areas of mathematics from intersection theory and enumerative
geometry to representation theory and combinatorics. The singular
Chern classes we consider are the \tCSM (CSM) class, constructed by
MacPherson in \cite{MacP}, and the \tCM class. Each of these classes
is an element of the Chow group of a complex algebraic variety. For a
non-singular variety, the Chern classes considered coincide with the
total (homology) Chern class of the tangent bundle.

Recently, the study of CSM classes of Schubert varieties in the
Grassmannian was initiated by Aluffi and Mihalcea in \cite{AM}.  In
particular, the explicit calculation of coefficients in the Schubert
basis of CSM classes of Schubert cells is completed in \cite{AM}. The
motivation for the present paper was to study the questions raised in
\cite{AM} from the point of view of small resolutions of
singularities. In particular, we answer the questions of computing the
Chern-Mather classes and local Euler obstructions for Schubert
varieties. The latter turn out to coincide with invariants coming from 
intersection cohomology. 

The main idea of our approach to understanding singular Chern classes is to use 
knowledge about explicit resolutions combined with the good functorial behavior of CSM classes. In particular, when small resolutions exist, the intimate relationship between the geometry and topology of the variety and its resolution lead to results relating the (singular) Chern classes. For example, an interesting consequence of the approach
above is the identification of the \tCM class and the stringy Chern class (cf. \cite{deFernex, Alu05}) in certain cases, see Remark \ref{rmk:stringy}.

The methods presented here provide a different approach than exists in the literature to
understanding and explicitly computing CSM classes. The approach, especially the existence of multiple 
distinct (Zelevinsky) resolutions, is useful for deriving pushforward formulas and exploring the positivity conjecture
of Aluffi and Mihalcae in some new cases.

\subsection{Statement of results}
\label{subsec:statement-of-results}

The first part of the paper deals with computing \tCSM and \tCM classes for general varieties using 
the tangent bundles of resolutions of singularities. In \S\ref{subsec:csm-by-resolution} we use the functorial properties of the \tCSM class to derive a formula for the 
class of a given complex variety. When one has explicit information about the resolution
of singularities involved, this approach leads to explicit computations. We 
carry out these computations in the case of Schubert cells and varieties $X$ in the Grassmannian in \S\ref{subsec:explicit}.


The first main result is Theorem \ref{thm:cm-xa}. We
prove that for certain varieties $X$, the \tCM class of $X$ coincides with the
pushforward of the CSM class of any small resolution. The Theorem allows us to 
compute the \tCM class without having to understand the Nash Blowup of $X$ explicitly.

\begin{thm*} 
	\label{thm:main-thm-3} 
	Let $X$ be a
complex algebraic variety which admits a small resolution and whose 
characteristic cycle is irreducible.
	Then, the \tCM class of a
  $X$ equals the push-forward of the Chern
  class of any small resolution $\pi : Z \to X$:
  \[ \CM( X ) = (\pi)_*( c(TZ) \cap [Z] ) .\]
\end{thm*}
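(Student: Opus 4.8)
The plan is to reduce, via the functoriality of MacPherson's transformation, to a pointwise identity of constructible functions, and then to prove that identity using the microlocal content of smallness (intersection cohomology and characteristic cycles).

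First I would recall that $\CM(X) = c_*(\operatorname{Eu}_X)$, where $c_*$ denotes MacPherson's natural transformation from constructible functions to the Chow group and $\operatorname{Eu}_X$ is the local Euler obstruction. Since $\pi\colon Z\to X$ is proper and $Z$ is smooth, naturality of $c_*$ together with $c_*(\one_Z)=c(TZ)\cap[Z]$ gives $\pi_*\bigl(c(TZ)\cap[Z]\bigr)=c_*\bigl(\pi_*\one_Z\bigr)$, and the pushforward constructible function $\pi_*\one_Z$ is $x\mapsto\chi\bigl(\pi^{-1}(x)\bigr)$. So the whole statement comes down to the pointwise identity $\chi\bigl(\pi^{-1}(x)\bigr)=\operatorname{Eu}_X(x)$; granting it, applying $c_*$ and invoking $\CM(X)=c_*(\operatorname{Eu}_X)$ finishes the proof.

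For that identity I would pass through intersection cohomology. Put $n=\dim X=\dim Z$ and fix (Zariski-locally, e.g.\ inside $\GRb{k}{V}$ in the application) a closed embedding $X\hookrightarrow M$ into a smooth variety, so that characteristic cycles live in $T^{*}M$; we may assume $X$, hence $Z$, irreducible. Smallness of the birational map $\pi$ is exactly the statement that $R\pi_{*}\bC_Z[n]\cong\IC{X}$, normalized so $\IC{X}$ restricts to $\bC[n]$ on the smooth locus; by proper base change the stalkwise Euler characteristic of $\IC{X}$ at $x$ then equals $(-1)^{n}\chi\bigl(\pi^{-1}(x)\bigr)$. On the microlocal side, the characteristic cycle $\operatorname{CC}(\IC{X})$ always contains the conormal variety $T^{*}_{X}M$ with multiplicity one, and each of its remaining components is the conormal variety of a proper closed subvariety of $X$; the hypothesis that $\operatorname{CC}(\IC{X})$ is irreducible therefore forces $\operatorname{CC}(\IC{X})=T^{*}_{X}M$. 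Finally, the characteristic-cycle map is an isomorphism from constructible functions on $X$ onto Lagrangian cycles supported over $X$, it carries a complex to the characteristic cycle attached to its stalkwise Euler characteristic function, and it carries $(-1)^{n}\operatorname{Eu}_X$ precisely to $T^{*}_{X}M$; comparing the two computations gives $(-1)^{n}\chi(\pi^{-1}(-))=(-1)^{n}\operatorname{Eu}_X$, as needed.

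The routine part is the functoriality reduction. The real work — and the place where the irreducibility hypothesis is exactly what is required — is the microlocal identification: the fact that $T^{*}_{X}M$ occurs in $\operatorname{CC}(\IC{X})$ with multiplicity precisely one while every other component is a conormal to a smaller stratum, and the careful matching of normalizations (the degree shift in $\IC{X}$, the convention for $\operatorname{CC}$ on complexes versus on constructible functions, and the microlocal characterization of $\operatorname{Eu}_X$ as the function whose characteristic cycle is the conormal cycle of $X$). These are standard in microlocal sheaf theory once the sign conventions are pinned down consistently; without the irreducibility assumption one would instead pick up correction terms indexed by the lower-dimensional components of $\operatorname{CC}(\IC{X})$. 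A variant that avoids the Lagrangian-cycle formalism is to use that $\operatorname{CC}$ commutes with proper pushforward and to compute $\operatorname{CC}(R\pi_{*}\bC_Z[n])$ directly, but this relocates rather than removes the bookkeeping.
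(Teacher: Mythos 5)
Your proof is correct, and its microlocal core coincides with the paper's: both arguments come down to the pointwise identity $\chi(\pi^{-1}(x)) = Eu_X(x)$, obtained by combining the Goresky--MacPherson theorem ($R\pi_*\C_{Z}[2\dim X]\cong\IC{X}$ for a small resolution, so stalk Euler characteristics of $\IC{X}$ compute fiber Euler characteristics, Proposition \ref{prop:decomp}) with the Dubson--Kashiwara microlocal input --- your statement that the characteristic-cycle map is injective on constructible functions and sends $(-1)^{\dim X}Eu_X$ to the conormal cycle of $X$ is an equivalent repackaging of the index formula (Theorem \ref{thm:microlocal-index}) that the paper invokes, and in both treatments the irreducibility hypothesis is exactly what kills the correction terms coming from smaller strata (this is the content of Proposition \ref{prop:dab-eu}). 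Where you genuinely diverge is the assembly step. The paper expands the CSM class of the open stratum in two ways --- via MacPherson's definition \eqref{eq:csm-formula-macp} and via a family of compatible resolutions of \emph{all} the strata closures, formula \eqref{eq:csm-formula-general} --- and then compares the two upper-triangular matrix identities $C=ME$ and $C=ZE$ to conclude $M=Z$. You instead apply naturality of $c_*$ once, to the single given small resolution, writing $\pi_*(c(TZ)\cap[Z])=c_*(\pi_*\one_Z)$ with $\pi_*\one_Z(x)=\chi(\pi^{-1}(x))$, and finish with $\CM(X)=c_*(Eu_X)$; this is shorter, needs no auxiliary resolutions of the boundary strata and no matrix inversion, whereas the paper's bookkeeping has the side benefit of living inside the computational framework of \S\ref{subsec:csm-by-resolution} whose matrices $d_{i,j}$, $e_{i,j}$ are reused for the explicit Schubert calculations. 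Two minor points, neither affecting correctness: smallness implies, but is not equivalent to, $R\pi_*\C_{Z}[\,\cdot\,]\cong\IC{X}$; and your identity $\CM(X)=c_*(Eu_X)$ should be checked against the paper's formulation of $c_*$, where it amounts precisely to the fact that the matrix $(f_{i,j})$ is inverse to the Euler-obstruction matrix $(e_{i,j})$.
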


The class on the right hand side coincides with the CSM (and the CM)
class of $Z$ since $Z$ is non-singular.  The theorem follows by
identifying the local Euler obstruction with an invariant coming from
the theory of intersection cohomology and using a theorem of Dubson and 
Kashiwara \cite{Dubson:index, Kash}.
The Schubert varieties in a Grassmannian satisfy the hypotheses above. 
In \S\ref{subsec:cm-computation} we compute the \tCM classes of all Schubert varieties in
$\GR{3}{6}$.  The identification of this class and the explicit calculations seem
to be new. The form of the identification also suggests that the
geometry of the Nash blowup of a Schubert variety is closely related
to the geometry of its small resolutions.


In order to carry out calculations in the context of the Grassmannian,
we need to understand the tangent bundle on a resolution of singularities of
a Schubert variety. 
We focus on the resolutions defined by Zelevinsky in \cite{Zel}.
The second main result is a formula for the CSM
class of a Zelevinsky resolution. Such a resolution $Z$ is
defined as a subvariety of a product of Grassmannians \[ Z \subset
\prodl_{i=1}^m \GRb{k_i}{V} = X.\] The universal sub-bundle $\bU_i$ on
each factor $\GRb{k_i}{V}$ pulls back to $X$ and also $Z$. The
incidence relations defining $Z$ as a subvariety of $X$ give rise to 
corresponding incidence
relations involving certain vector bundles on $Z$: 
\[ \blW_i \subset \bU_i \subset \brW_i \quad \text{for} \quad i=1,
\ldots, m\] (see
\S~\ref{subsec:zel-reso} for definitions). The following is a
restatement of Theorem \ref{thm:c-tangent-bundle}.
\begin{thm*}
\label{thm:main-thm-1}
  The CSM class of $Z$ is
  \[ \csm(Z) = \left( \prodl_{i=1}^m c( (\bU_i/\blW_i)^\vee \otimes
  (\brW_i/\bU_i) ) \right) \cap [Z] .\]
\end{thm*}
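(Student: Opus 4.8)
The plan is to compute $\csm(Z)$ by identifying $Z$ as a nonsingular variety (a Zelevinsky resolution), so that its CSM class is simply the total Chern class of its tangent bundle capped with the fundamental class, $\csm(Z) = c(TZ) \cap [Z]$. Thus the theorem reduces to the geometric task of computing $c(TZ)$ explicitly in terms of the tautological bundles $\bU_i$ and the incidence bundles $\blW_i, \brW_i$ on $Z$. The key is to realize $Z$ as an iterated tower of relative Grassmannian (or flag) bundles: the incidence conditions $\blW_i \subset \bU_i \subset \brW_i$ mean that, fibered over the appropriate base, the $i$-th factor of the data amounts to choosing a $(k_i - \rk\blW_i)$-dimensional subspace $\bU_i/\blW_i$ inside the quotient bundle $\brW_i / \blW_i$. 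I would make this tower structure precise from Zelevinsky's construction in \S\ref{subsec:zel-reso}, so that $Z$ is built up by a sequence of Grassmann-bundle projections $Z = Z_m \to Z_{m-1} \to \cdots \to Z_0 = \pt$.

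Once the tower is set up, I would use the standard short exact sequence for the relative tangent bundle of a Grassmann bundle. If $G \to B$ is the Grassmann bundle of rank-$r$ subbundles of a bundle $\mathcal{E}$ of rank $N$, with tautological sub- and quotient bundles $\mathcal{S} \subset \pi^*\mathcal{E} \twoheadrightarrow \mathcal{Q}$, then $T_{G/B} \cong \mathcal{S}^\vee \otimes \mathcal{Q} = \Hom(\mathcal{S}, \mathcal{Q})$. Applying this at stage $i$, where the relevant sub-object is $\bU_i/\blW_i$ sitting inside $\brW_i/\blW_i$ with quotient $\brW_i/\bU_i$, gives the relative tangent bundle $(\bU_i/\blW_i)^\vee \otimes (\brW_i/\bU_i)$ — exactly the $i$-th factor appearing in the claimed product. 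Since the tangent bundle of a tower is filtered by the relative tangent bundles of the successive projections, $c(TZ) = \prod_{i=1}^m c\bigl((\bU_i/\blW_i)^\vee \otimes (\brW_i/\bU_i)\bigr)$ by the Whitney formula, and capping with $[Z]$ finishes the proof.

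**The main obstacle** I anticipate is verifying that the tower structure genuinely exhibits stage $i$ as a \emph{Grassmann bundle} with the claimed tautological sub- and quotient bundles — i.e. that the bundles $\blW_i$ and $\brW_i$ are already well-defined on $Z_{i-1}$ (pulled back), that they form a subbundle chain $\blW_i \subset \brW_i$ of \emph{locally constant rank} there, and that $Z_i \to Z_{i-1}$ really is the full relative Grassmannian $\GRb{k_i - \rk\blW_i}{\brW_i/\blW_i}$ rather than a proper subvariety. This is where the specific combinatorics of Zelevinsky's resolution — the nesting pattern of the incidence conditions and the order in which the factors are introduced — must be invoked: one needs the condition defining the $i$-th factor, given all earlier ones, to involve only the two bundles $\blW_i \subset \brW_i$ and nothing else. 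I would organize this as a lemma establishing the iterated-Grassmann-bundle structure, with the rank computations $\dim Z = \sum_i \rk\bigl((\bU_i/\blW_i)^\vee \otimes (\brW_i/\bU_i)\bigr)$ serving as a consistency check against the known dimension of the Schubert variety. A secondary, purely bookkeeping point is to confirm that the bundles $\bU_i, \blW_i, \brW_i$ named in the statement are the pullbacks to all of $Z$ of the stagewise bundles, so that the product formula is an identity of Chern classes on $Z$ itself.
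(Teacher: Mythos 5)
Your proposal is correct and follows essentially the same route as the paper: the paper proves Theorem \ref{thm:c-tangent-bundle} by induction along the tower $Z^{(m)} = Z \to Z^{(m-1)} \to \cdots \to Z^{(1)}$, using the relative tangent bundle $\Hom(\bS,\bQ)$ of a Grassmann bundle (Proposition \ref{prop:vertical-bundle}) and the Whitney formula. The ``main obstacle'' you identify --- that each stage is genuinely the full relative Grassmannian $\GRb{l_j}{\brW_j/\blW_j}$ with $\blW_j,\brW_j$ already defined on the previous stage --- is exactly the content of the paper's Proposition \ref{prop:Z-fiber-bundle}, which follows from Zelevinsky's inductive construction guaranteeing $\lW_j \subsetneq \rW_j$ and that these spaces are either earlier $U_i$'s or members of the fixed partial flag.
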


In section \ref{sec:csm-reso} we discuss the pushforward of the class
$\csm(Z) = c(TZ) \cap [Z]$ to the ambient Grassmannian of the Schubert
variety being resolved. This computes the main ingredient of
our formula. We give two methods for
computing the pushforward. The first method uses the localization
theorem for equivariant Chow groups. We present calculations of the
CSM classes of all Schubert cells in $\GR{3}{6}$ using this
method. The second method, described in \S\ref{subsec:direct-pushforward}, is based on an explicit description of the
Gysin map for Grassmannian bundles given by Fulton and Pragacz and exploits the
combinatorial simplicity of certain Zelevinsky resolutions which exist for any Schubert variety. 


In the last section we explore the positivity conjecture of Aluffi and Mihalcae from \cite{AM}. The conjecture states that the CSM class of any Schubert cell is positive. This amounts to the statement that the coefficients of the classes of Schubert varieties contained in the boundary of the closure of a given cell are non-negative. By additivity of the CSM class, the conjecture implies a weaker positivity conjecture concerning the CSM classes of Schubert varieties. We prove this weaker version in an infinite family of special cases in \S\ref{subsec:weak_conj}. For Schubert cells, proving positivity is more difficult. In \S\ref{subsec:codimension-one}, we show how the methods presented here may be used to prove that coefficients with small codimension in the CSM class of an arbitrary Schubert cell are non-negative. In particular, we give a new combinatorial interpretation of codimension one coefficients.


\section{CSM Classes and Resolution of Singularities}
\label{sec:csm}

\subsection{Chern-Schwartz-MacPherson Classes}
\label{subsec:csm-defn}

In \cite{MacP}, MacPherson defines two classes for each complex
algebraic variety $X$: the \tCM (CM) class, denoted $\cm(X)$, and the
\tCSM (CSM) class, denoted $\csm(X)$.  Both classes satisfy the
property that if $X$ is smooth, then the class coincides with $c(TX)
\cap [X]$, the total homology Chern class of $X$. The task then is to
define a class for varieties which are not smooth. In this case the
tangent bundle $TX$ is not available. We briefly recall how the CM and
CSM classes are defined and refer the reader to \cite{MacP} for
details.

Let $X$ be an irreducible complex algebraic variety and $\nu : \XT \to X$ be the Nash
blowup with tautological Nash bundle $\bS$. The class $\cm(X) :=
\nu_*( c(\bS) \cap [\XT] ) \in A_* X$ is called the \emph{\tCM class}
of $X$. It follows from properties of the Nash blowup that $\cm(X) \in [X] +
\sum_{i<\dim X}A_i X$.

On the other hand, the CSM class is defined in terms of the CM class
and an invariant called the local Euler obstruction which we define.
Let $p \in X$ and let $S(\nu^{-1}(p), \XT )$ denote the Segre class of the
normal cone to $\nu^{-1}(p)$ in $\XT$ (see \cite{Ful}, \S~4.2).  The
local Euler obstruction of $X$ at $p$ is
\begin{equation} \label{eq:euler-obs}
 Eu_X(p) := \int_{\nu^{-1}(p)} c( \bS\lvert_{\nu^{-1}(p)} ) \cap
S(\nu^{-1}(p), \XT )
\end{equation}
This formula is due to Gonzalez-Sprinberg and Verdier, MacPherson's
original definition is topological. The function $Eu_X : X \to
\mathbb{Z}$ is constructible with respect to any Whitney
stratification of $X$ (cf. \cite{MacP}, Lemma 2).

The integers defined by \eqref{eq:euler-obs}
determine a group automorphism of $F_* X$, the group of constructible
functions on $X$, as follows. For $W \subset X$ a closed subset, let
$\one_W$ denote the characteristic function of $W$. Then, 
$$Eu(\one_W)(p) = \left\{ \begin{array}{ll} Eu_W(p) & \text{if} \,\, p
    \in W \\ 0 & \text{otherwise} \end{array} \right. .$$ This extends
by linearity to an automorphism of $F_* X$, see \cite[Lemma 2]{MacP}
and \cite[Proposition \S6]{Gonz}.

Now let $X = \cup_{i \in I} S_i$ be a finite Whitney stratification of $X$
into smooth, locally closed subsets and fix a total ordering on $I$
compatible with the partial order given by inclusion of
closures of the $S_i$. The CSM class of the closure of a stratum is
defined below. This definition suffices to define the CSM class of
$X$ since it is the closure of the unique open stratum.

By \cite[Proposition \S6]{Gonz} there are integers $f_{i,j}$ such
that $$ Eu^{-1}(\one_{S_i}) = \suml_j f_{i,j} \one_{\overline{S_j}}
.$$ Then $f_{i,j}$ are entries of the matrix which is inverse to
the matrix with entries $e_{i,j} = Eu_{\overline{S_i}}(p_j)$ (for arbitrary points $p_j \in S_j$). We remark that the matrix with
entries $f_{i,j}$ ($e_{i,j}$ respectively) is upper triangular with
$1$'s along the diagonal. Upper triangularity follows from the
definition and the statement about the diagonal follows from the fact
that $Eu_X(p) = 1$ whenever $p$ is a smooth point of $X$
(cf. \cite{MacP}, p.426).

The following map defines a morphism from the group of functions which are
constructible with respect to the stratification $\cup_i S_i$ to the
Chow group of $X$:
\begin{equation} \label{eq:csm-formula-macp}
	c_*( \one_{\overline{S_i}} ) = \suml_j f_{i,j} \cdot \cm( \overline{S_j} ) .
\end{equation}
This suffices to define a map $c_* : F_* X \to A_* X$ since there always exists a stratification of $X$ which includes a given closed
subset as the closure of some stratum. We have defined $c_*$
to take values in the Chow group of $X$ rather than homology, as is done in
\cite{MacP}. See \cite{Ful}, \S 19.1.7 for comments about this refinement.

For $f: X \to Y$ a proper morphism, there is an induced pushforward
map $f_* : F_* X \to F_* Y$. Let $W \subset X$ be locally closed and $y
\in Y$, then $f_*(\one_W)(y) = \Chi(f^{-1}(y) \cap W)$ where $\Chi$ denotes
the topological Euler characteristic.

The following theorem is the main result of \cite{MacP} stated in
terms of the Chow group.

\begin{thm} \label{thm:macp-thm}
(MacPherson) The transformation $c_* \colon
  F_* \to A_*$ is natural. That is, if $f \colon Y
  \to X$ is a proper map, the following diagram commutes:
  \[ \xymatrix{
    F_* Y \ar[r]^{c_*} \ar[d]_{f_*} & A_* Y \ar[d]^{f_*} \\
    F_* X \ar[r]^{c_*} & A_* X .} \]
\end{thm}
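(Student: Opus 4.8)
\medskip
\noindent\textbf{Proof proposal.}
This is MacPherson's naturality theorem, so the plan is to follow the strategy of \cite{MacP}, which hinges on a single geometric input handled by the ``graph construction.'' Since $c_*$ and $f_*$ are additive and $F_*Y$ is spanned by the characteristic functions $\one_V$ of closed subvarieties $V\subseteq Y$, it suffices to prove $f_*c_*(\one_V)=c_*(f_*\one_V)$ for each such $V$. Factoring $f|_V$ through the closed embedding $V\hookrightarrow Y$, for which naturality is immediate because the Chern--Mather class $\cm(W)$ of a subvariety $W$ is intrinsic to $W$, we reduce to the case $V=Y$: one must show $f_*c_*(\one_Y)=c_*(f_*\one_Y)$, where $f_*\one_Y$ is the constructible function $x\mapsto\Chi(f^{-1}(x))$.

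The essential point is the following \emph{Key Lemma}: if $M$ is \emph{nonsingular} and $g\colon M\to X$ is proper, then $g_*\bigl(c(TM)\cap[M]\bigr)=c_*(g_*\one_M)$. (When $X$ is a point this is the classical Gauss--Bonnet identity $\int_M c(TM)\cap[M]=\Chi(M)$, so the Key Lemma is its relative form.) Granting it, the theorem follows by Noetherian induction on $\dim Y$: choose a resolution $\sigma\colon\widetilde Y\to Y$ that is an isomorphism over the smooth locus, so $\sigma_*\one_{\widetilde Y}=\one_Y+\theta$ with $\theta$ supported on a closed subset $Y'\subsetneq Y$ of strictly smaller dimension; since $\widetilde Y$ is smooth one has $c_*(\one_{\widetilde Y})=c(T\widetilde Y)\cap[\widetilde Y]$, and applying the Key Lemma to $\sigma$ and to $f\circ\sigma$ gives $f_*c_*(\one_Y)-c_*(f_*\one_Y)=\pm\bigl(f_*c_*(\theta)-c_*(f_*\theta)\bigr)$, which vanishes by the inductive hypothesis applied to the proper maps out of $Y'$; the base case $\dim Y=0$ is itself an instance of the Key Lemma. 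It thus remains to prove the Key Lemma, and since $M$ is quasi-projective $g$ factors as a closed embedding $M\hookrightarrow\CP^N\times X$ over $X$ followed by the projection $\pr_X$ (in general one invokes Chow's lemma first), so we may assume $M\subseteq\CP^N\times X$ is smooth and $g=\pr_X$.

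To handle this last case I would run MacPherson's graph construction. From the tangent bundle of the smooth variety $M$ one builds, by interpolation, a one-parameter family of cycles inside a Grassmann bundle over $\CP^N\times X$, and passes to the limit as the parameter tends to $\infty$; the resulting limit cycle $\sum_i m_i[Z_i]$ has a principal part which, after a Chern-class computation on the Grassmann bundle, accounts for $g_*\bigl(c(TM)\cap[M]\bigr)$, plus further components $[Z_i]$ supported over the critical values of $g$. It then remains to check that (a) the multiplicities $m_i$ equal the local Euler obstructions of \eqref{eq:euler-obs} --- this is where the formula of Gonzalez-Sprinberg and Verdier recalled above must be matched against a continuity statement for Segre classes of normal cones in the family --- and (b) assembling (a) with the Chern-class computation gives $g_*\bigl(c(TM)\cap[M]\bigr)=\suml_j a_j\,\cm(\overline{S_j})$ for the stratification with $g_*\one_M=\suml_j a_j\one_{\overline{S_j}}$, which is exactly $c_*(g_*\one_M)$ by \eqref{eq:csm-formula-macp}.

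I expect step (a) to be the main obstacle: it is the point at which the topological content of the local Euler obstruction is matched against an algebraic degeneration of cycles, and it is the reason the theorem lies this deep. Finally, two remarks. The passage from MacPherson's original homological statement to the Chow-group refinement used here is routine, as every construction above is algebraic; compare \cite[\S19.1.7]{Ful}. And an alternative route factors $c_*$ through the characteristic-cycle isomorphism between $F_*X$ and conic Lagrangian cycles in $T^*X$ together with a Chern-class map on such cycles; naturality then becomes the functoriality of characteristic cycles under proper pushforward, which relocates --- but does not remove --- the essential difficulty.
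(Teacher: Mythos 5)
The paper does not prove this statement: it is quoted as the main result of \cite{MacP}, with the passage from homology to the Chow group delegated to \cite{Ful}, \S 19.1.7. So there is no in-paper argument to compare yours with; the comparison must be with MacPherson's original proof.

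As an outline of that proof, your reductions are sound: additivity reduces naturality to the functions $\one_V$; the closed-embedding case is indeed immediate from the intrinsic nature of the Chern--Mather class and of the local Euler obstruction; and your Noetherian induction via a resolution $\sigma$ with $\sigma_*\one_{\widetilde Y}=\one_Y+\theta$ correctly reduces everything to the Key Lemma $g_*\bigl(c(TM)\cap[M]\bigr)=c_*(g_*\one_M)$ for $M$ smooth and $g$ proper. The gap is that this Key Lemma is not a lemma --- it is the entire content of the theorem. Your treatment of it (``run the graph construction,'' then ``check that the multiplicities equal the local Euler obstructions'') names the two claims that constitute MacPherson's actual contribution without establishing either; in particular, step (a) cannot be settled by ``matching'' the Gonzalez-Sprinberg--Verdier formula \eqref{eq:euler-obs} against a continuity statement for Segre classes, since in \cite{MacP} the identification of the limit-cycle multiplicities with Euler obstructions is a genuine obstruction-theoretic computation (the algebraic formula of Gonzalez-Sprinberg and Verdier came later and would itself need proof). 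So the proposal is a correct roadmap, not a proof. One structural remark: MacPherson does not reduce to a smooth source by resolution of singularities; he factors the map through $\CP^N\times X$ and applies the graph construction to a bundle map over the Nash blowup of the possibly singular subvariety, the Euler obstructions appearing as multiplicities there. Your resolution-plus-induction variant is legitimate and occurs in later expositions, but, as you yourself note at the end, it relocates rather than removes the essential difficulty.
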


For $W \subset X$ closed, we set the \emph{\tCSM class} of $W$ to be 
\begin{equation} \label{eq:csm-defn}
\csm(W) := c_*( \one_W ) .
\end{equation}
Since the matrix with entries $f_{i,j}$ is upper triangular with $1$'s
along the diagonal and $\cm(X) \in [X] + \sum_{i<\dim X}A_i X$ it
follows that $\csm(X) \in [X] + \sum_{i<\dim X}A_i X$ as well. 

Theorem \ref{thm:macp-thm} allows us to define a CSM class
for each locally closed subset of a fixed ambient variety. Let $S
\subset M$ be locally closed, $S = X \setminus Y$ with $X, Y$
closed in $M$. Then we set
\begin{equation} \label{eq:csm-cell-defn}
\csm(S) = \csm(X) - \csm(Y)
\end{equation}
and regard the class $\csm(S)$ as an element of $A_* M$.  Theorem
\ref{thm:macp-thm} implies that $\csm(S)$, as defined above, is
independent of the choice of $X$ and $Y$. However, in general it
depends on the embedding $S \into M$. If $i : M \to N$ is a proper
morphism and $i_*$ is injective (e.g. if $N$ has a cellular
decomposition and $M$ is the closure of a cell in $N$), we abuse
notation and write $\csm(S)$ for both the class in $A_* M$ and the
class $i_* \csm(S) \in A_* N$.

\subsection{Chern Classes by Resolution of Singularities}
\label{subsec:csm-by-resolution}

The \tCM class is difficult to calculate from the definition because
the Nash blowup is poorly understood in general. Also, its construction
is not functorial in the sense of Theorem \ref{thm:macp-thm}. This is a
major advantage to studying the CSM class. We will use
functoriality along with resolution of singularities to calculate
CSM classes. As a consequence of this
approach we obtain an effective algorithm for calculating the \tCM
class for varieties satisfying hypotheses on their resolutions and characteristic cycles. We now describe how to compute the CSM class using a family of
resolutions. In this paper a \emph{resolution of singularities} means a map of
algebraic varieties $\pi : Z \to X$ which is proper and birational with
$Z$ non-singular.

Suppose we embed a variety $X$ into a smooth variety $M$ which is
stratified compatibly with $X$. Let $M = \cup_{i \in I} S_i$ with $X =
\overline{S_{i_0}}$ for some index $i_0 \in I$. Also, let the index set $I$
have a total ordering such that $i \le j$ if $S_i \subset
\overline{S_j}$. Suppose there exist resolutions of
singularities  $Z_i \overset{\pi_i}{\longrightarrow} \overline{S_i}$
(for each $i \in I$) which are compatible with the stratification of
$M$ in the sense that the restriction of $\pi_i$ over any
stratum $S_j$ with $S_j \subset \overline{S_i}$ is a fiber bundle in the complex topology.

For $i, j \in I$ define $d_{i, j} := \Chi( \pi_i^{-1}(p) )$ for any $p
\in S_j$.  Naturality of the transformation $c_*$ implies
\begin{equation} \label{eq:pistar-CSM-Z} 
	(\pi_i)_* \CSM(Z_i) = (\pi_i)_* c_* (\one_{Z_i}) = c_* (\pi_i)_*
(\one_{Z_i}) = \sum_{j \le i} d_{i, j} c_* (\one_{S_j}) .
\end{equation}
The last equality follows from the definition of pushforward for
constructible functions, see \cite{MacP}.

We may view the system of
equations above as a matrix equation. The matrix with entries $d_{i,j}$
is an upper triangular, integer matrix with 1's on the diagonal. Hence
it is invertible. Solving for $\CSM(S_i)$ yields:
\begin{equation}
  \label{eq:csm-formula-general}
  \CSM(S_i) = \sum_{j \le i} e_{i, j} \cdot (\pi_j)_* \CSM(Z_j) .
\end{equation}
where the integers $e_{i,j}$ are entries of the matrix which is
inverse to the matrix with entries $d_{i,j}$.

By additivity, the CSM class of $X = \overline{S_{i_0}}$ is
the sum of the CSM classes of strata contained in $X$:
\begin{equation} \label{eq:csm-formula-X}
\csm(X) = \sum_{S_i \subset X} \csm(S_i)
\end{equation}
Since the $Z_i$ are smooth, the previous two
formulae give a concrete method of calculating CSM classes once the
tangent bundles of the resolutions and the Euler characteristics of
their fibers are understood. 


\section{The \tCM Class }
\label{sec:micro-local}

After recalling some basic results from microlocal geometry, we show how the approach to understanding \tCSM classes described in the Section \ref{sec:csm}
 allows us to identify (and explicitly calculate) both
MacPherson's local Euler obstructions and the \tCM classes of certain complex algebraic varieties including the Schubert varieties in a Grassmannian.

\subsection{Perverse Sheaves} 
\label{subsec:perverse-sheaves}

Let $X \subset Y$ be irreducible varieties.
Corresponding to $X$ there is a complex of 
sheaves of $\C$-vector spaces on $Y$ called the 
Deligne-Goresky-MacPherson sheaf (or intersection cohomology sheaf) 
of $X$. We denote this sheaf by $\IC{X}$. It is constructible with respect to any Whitney stratification of $Y$.

The sheaf $\IC{X}$ is a perverse sheaf on $Y$
with respect to the middle perversity function $p : 2 \mathbb{N} \to
\mathbb{Z}$ given by $p(2k) = -2k$ (cf. \cite{Dimca}, Ch. 5). We use
the convention that a sheaf $\sF$ on a variety $Y$ is
perverse if for all integers $k \ge 0$:
\begin{enumerate}
\item $\dim( \supp \sH^m( \sF ) ) < k$ for all $m > -2k$
\item $\dim( \supp \sH^m( \VD\sF ) ) < k$ for all $m > -2k$, where $\VD$ is
  the functor of Verdier duality (cf. \cite{Dimca}, Ch. 3).
\end{enumerate}

Let $d = \dim_{\C} Y$. The conditions above imply that a perverse sheaf on $Y$
has vanishing cohomology sheaves $\sH^m( \sF )$ when $m$ is outside the range $-2d \le m \le 0$ (\cite{Dimca}, Remark 5.1.19).  We denote the constant sheaf
in degree zero with stalk $\C$ at all points of $Y$ by $\C_{Y}$. If $Y$ is
smooth then $\IC{Y} = \C_{Y}[2d]$. 

Suppose that $f : Z \to X$ is a resolution of singularities with the property that for all $i>0$,
\[ \codim \{ x \in X \mid \dim f^{-1}(x) \ge i \} > 2 i .\] 
Such a map is called a \emph{small resolution} since the locus over which fibers can be large is ``small''.

The following proposition connects the topology of $X$ and $Z$ 
with $\IC{X}$. For a proof see \cite{GM2}, Corollary 6.2.

\begin{prop} (Goresky-MacPherson) \label{prop:decomp}
	Let $X$ be an irreducible complex algebraic variety of dimension $s$ and 
	let $f~:~Z~\to~X$ be a small resolution of singularities. Then
	$Rf_* \C_{Z}[2s] \cong \IC{X}$. In particular, for $x \in X$:
	\[ \Chi_x( \IC{X} ) = \sum_i (-1)^{i} \dim H^i( f^{-1}(x); \C ) =
	\Chi( f^{-1}(x) ), \] where $\Chi_x$ denotes the stalk Euler 
	characteristic at the point $x \in X$. 
\end{prop}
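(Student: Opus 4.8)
The plan is to deduce the isomorphism $Rf_*\C_Z[2s] \cong \IC{X}$ by checking that the left-hand side satisfies the Goresky--MacPherson characterization of the intersection cohomology complex --- the normalization condition over the open stratum together with the support and cosupport conditions --- and then to read off the stalk Euler-characteristic identity from proper base change. The whole argument is local on $X$ and uses nothing beyond the standard formalism of constructible derived categories.

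First I would fix a Whitney stratification $X = \bigsqcup_k S_k$, with $S_k$ of complex codimension $k$, compatible with $f$ (so that $f$ restricts to a topological fiber bundle over each $S_k$) and fine enough that $Rf_*\C_Z[2s]$ is constructible with respect to it; such a stratification exists since $f$ is proper and algebraic. Because $f$ is a resolution it is an isomorphism over a dense Zariski-open $U \subseteq X$ contained in the smooth locus, so $Rf_*\C_Z[2s]|_U \cong \C_U[2s]$, which is the normalization axiom. By proper base change, for $x \in X$,
\[ \sH^m(Rf_*\C_Z[2s])_x \cong H^{m+2s}(f^{-1}(x);\C), \]
which vanishes unless $0 \le m + 2s \le 2\dim_{\C} f^{-1}(x)$.

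Next I would verify the support condition. Setting $Z_i := \{\,x \in X : \dim f^{-1}(x) \ge i\,\}$, the vanishing above shows $\supp \sH^m(Rf_*\C_Z[2s]) \subseteq Z_i$ with $i = \lceil (m+2s)/2 \rceil$; for $m > -2s$ this forces $i \ge 1$, and the smallness hypothesis $\codim Z_i > 2i$ then gives $\dim \supp \sH^m(Rf_*\C_Z[2s]) \le \dim Z_i < s - 2i$, which is strictly below the bound demanded of $\IC{X}$ on the strata in $X \setminus U$. The cosupport condition is checked by the symmetric argument: since $f$ is proper, $Rf_* \cong Rf_!$ commutes with Verdier duality, and on the smooth variety $Z$ the complex $\VD(\C_Z[2s])$ is the constant sheaf placed in a single degree, so $\VD(Rf_*\C_Z[2s]) \cong Rf_*\C_Z[\ell]$ for a suitable $\ell$, whose cohomology sheaves are again supported in the $Z_i$ and controlled by smallness exactly as above. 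With normalization, support, and cosupport verified --- and with the strict inequalities on $X \setminus U$ that pin down the intermediate extension --- the uniqueness half of the Goresky--MacPherson characterization (\cite{GM2}) gives $Rf_*\C_Z[2s] \cong \IC{X}$.

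Finally, combining this isomorphism with the proper base change computation yields
\[ \Chi_x(\IC{X}) = \sum_m (-1)^m \dim_\C \sH^m(Rf_*\C_Z[2s])_x = \sum_m (-1)^m \dim_\C H^{m+2s}(f^{-1}(x);\C) = \sum_i (-1)^i \dim_\C H^i(f^{-1}(x);\C), \]
the last equality reindexing by $i = m+2s$ and using $(-1)^{-2s}=1$, and the right side being the Euler characteristic of the compact space $f^{-1}(x)$. The main obstacle I anticipate is bookkeeping rather than conceptual: one must pin down the shift and perversity conventions so that ``small'' translates exactly into the \emph{strict} support and cosupport bounds characterizing $\IC{X}$ (a crude count only gives the non-strict perversity bound, which would not by itself exclude, say, a summand supported on a lower stratum), and keep the normalization of the dualizing complex consistent with the convention $\IC{Y} = \C_{Y}[2d]$ fixed above. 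Everything else is routine.
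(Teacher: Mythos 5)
Your argument is correct and is essentially the standard Goresky--MacPherson proof: the paper gives no proof of Proposition \ref{prop:decomp}, deferring to \cite{GM2}, Corollary 6.2, and your route --- normalization over the locus where $f$ is an isomorphism, proper base change plus smallness yielding exactly the strict support bound $\dim \supp \sH^m(Rf_*\C_Z[2s]) \le -m-s-1$ for $m>-2s$, the cosupport bound via $Rf_*\cong Rf_!$ and duality on the smooth $Z$, and then the uniqueness part of the axiomatic characterization of $\IC{X}$ --- is the argument of that reference. The only caveat is the convention bookkeeping you already flag (in the paper's doubled-shift convention $\C_Z[2s]$ is Verdier self-dual only up to an even shift, and your bound is not ``strictly below'' but exactly the strict bound in the axioms), none of which affects the support estimates or the signs in the Euler-characteristic identity.
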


Now let $Y$ be a smooth variety equipped with a stratification 
$Y = \cup_{i \in \cS} U_i$.
To each sheaf $\sF$ on $Y$, constructible with respect to the $U_i$, there is an associated cycle,
called the \emph{characteristic cycle}, in the total space of the
cotangent bundle $T^*Y$. The characteristic cycle may be expressed as
$$CC(\sF) = \suml_{i \in \cS} c_i(\sF) \cdot \left[ \, \overline{
      T^*_{U^{\phantom{\circ}}_i} Y} \, \right] .$$
where $c_i(\sF)$ is an integer called the
\emph{microlocal multiplicity} of $\sF$ along $U_i$. 
The cycle may be defined using the
category of holonomic $\mathcal{D}$-modules on $Y$ via the Riemann-Hilbert
correspondence, or explicitly in terms of the complex link spaces of pairs of strata,
see \cite{Dimca}, \S~4.1 .

\subsection{Microlocal Index Formula}

To any constructible sheaf $\sF$ on a smooth stratified variety
$Y=\cup_i U_i$ we described in the previous section two invariants defined for each stratum $U_i$:
$\Chi_i(\sF)$, and $c_i( \sF )$. 
The following theorem, due to  Dubson (\cite{Dubson:index}, 
Th\'{e}or\`{e}me 3) and Kashiwara 
(\cite{Kash}, Theorem 6.3.1), relates these two invariants to the local
Euler obstruction.

\begin{thm}[Microlocal Index Formula]
  \label{thm:microlocal-index}
  For any $j \in \cS$ and
  $\cS$-constructible sheaf $\sF$ on $Y$:
  \[ \Chi_j( \sF ) = \suml_{i \in \cS}
  Eu_{\overline{U_i}}(u_j) \cdot c_i( \sF ) .\]
\end{thm}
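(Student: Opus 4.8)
The plan is to reduce the identity to a single geometric input --- the characteristic cycle attached to MacPherson's local Euler obstruction function --- after which everything is linear algebra over $\Z$. Fix $j \in \cS$. The key observation is that both sides of the asserted equality depend on an $\cS$-constructible complex $\sF$ only through its stalk Euler characteristic function $\mathrm{eu}(\sF) := \big(y \mapsto \Chi_y(\sF)\big)$: the left side is literally $\mathrm{eu}(\sF)(u_j)$, while each microlocal multiplicity $c_i(\sF)$ on the right is computed by a (relative) Euler characteristic of $\sF$ along a generic normal slice to the stratum $U_i$ --- the complex-link description of $CC$ in \cite[\S4.1]{Dimca} --- which again only involves $\mathrm{eu}(\sF)$. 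Hence the characteristic cycle descends to a homomorphism $CC \colon CF_\cS(Y) \to L_\cS(T^*Y)$ from the group of $\cS$-constructible functions to the free abelian group on the conormal cycles $[\,\overline{T^*_{U_i}Y}\,]$, and since $CC(\one_{\overline{U_i}}) = [\,\overline{T^*_{U_i}Y}\,] + (\text{terms supported on smaller strata})$, this map is an isomorphism of free $\Z$-modules of rank $\lvert\cS\rvert$.

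Next I would bring in two facts from MacPherson's theory. First, the Euler obstruction functions $\{Eu_{\overline{U_i}}\}_{i\in\cS}$ form a $\Z$-basis of $CF_\cS(Y)$, being unitriangular in the basis $\{\one_{\overline{U_i}}\}$ (cf. \cite[Lemma 2]{MacP} and the discussion of the matrix $(e_{i,j})$ above). Second --- and this is the crux, and the step I expect to be the main obstacle --- the dictionary
\[ CC\big(Eu_{\overline{U_i}}\big) = \left[\, \overline{T^*_{U_i}Y} \,\right] \qquad (i\in\cS), \]
that is, the Euler obstruction of a stratum closure is exactly the constructible function whose characteristic cycle is the single reduced conormal variety of that stratum. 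This identification is the content of the theorems of Dubson \cite{Dubson:index} and Kashiwara \cite{Kash}. I would establish it by matching the Gonzalez--Sprinberg--Verdier Segre-class formula \eqref{eq:euler-obs} for $Eu_{\overline{U_i}}$ against the normal-slice description of the coefficients of $CC$: both can be written through the same local Euler characteristics of generic linear sections of $\overline{U_i}$, and the point is that all ``lower-stratum'' corrections cancel, leaving only the conormal of $U_i$. (Equivalently one may go through the Riemann--Hilbert correspondence and compare $CC$ of the regular holonomic module underlying $\IC{\overline{U_i}}$ with the index-theoretic expression for $Eu$, which is the route taken in \cite{Kash}.)

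Granting the dictionary, the theorem is purely formal. Write the stalk Euler characteristic of $\sF$ in the Euler-obstruction basis, $\mathrm{eu}(\sF) = \suml_{i\in\cS} b_i\, Eu_{\overline{U_i}}$ with $b_i\in\Z$. Applying the homomorphism $CC$ yields $CC(\sF) = \suml_{i\in\cS} b_i\left[\,\overline{T^*_{U_i}Y}\,\right]$, so by the very definition of the microlocal multiplicities, $c_i(\sF) = b_i$ for all $i$. Evaluating the function $\mathrm{eu}(\sF)$ at the chosen point $u_j\in U_j$ then gives
\[ \Chi_j(\sF) = \mathrm{eu}(\sF)(u_j) = \suml_{i\in\cS} b_i\, Eu_{\overline{U_i}}(u_j) = \suml_{i\in\cS} Eu_{\overline{U_i}}(u_j)\, c_i(\sF), \]
which is the asserted formula. (The sign-free shape is consistent with the conventions fixed above, in which cohomological shifts are by twice the complex dimension, so that $CC(\C_{Y}[2\dim Y])$ is the zero section and, in the normalization used here, $CC(\one_Y) = [\,\overline{T^*_{Y}Y}\,]$.)
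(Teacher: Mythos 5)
The paper does not prove this statement at all: it is quoted verbatim from the literature (Dubson, Th\'eor\`eme 3, and Kashiwara, Theorem 6.3.1), so there is no ``paper proof'' for your argument to parallel, and your attempt has to stand on its own. Its formal skeleton is fine: $CC$ is additive on distinguished triangles and therefore factors through the stalkwise Euler characteristic map to $\cS$-constructible functions, the functions $Eu_{\overline{U_i}}$ are unitriangular in the $\one_{\overline{U_i}}$ basis and hence a $\Z$-basis, and once one knows $CC\big(Eu_{\overline{U_i}}\big) = \big[\,\overline{T^*_{U_i}Y}\,\big]$ the displayed identity follows by the change-of-basis computation you give.

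The genuine gap is that this ``dictionary'' is not a lemma on the way to the Microlocal Index Formula --- it \emph{is} the Microlocal Index Formula, restated. The two assertions are equivalent under exactly the unitriangular linear algebra you perform (apply the theorem to complexes whose $CC$ is a single conormal cycle to recover the dictionary, and conversely), so deducing the theorem from it is circular unless the dictionary is proven independently. Your proposed justification --- match the Gonzalez--Sprinberg--Verdier Segre-class formula \eqref{eq:euler-obs} for $Eu_{\overline{U_i}}$ against the complex-link description of the multiplicities of $CC$ and observe that ``all lower-stratum corrections cancel'' --- is an assertion of precisely the hard content, not an argument. Those corrections are governed by Euler characteristics of complex links of pairs of strata and by multiplicities of polar varieties, and showing they match the Nash-blowup/Segre-class expression is the substance of Dubson's and Kashiwara's theorems (via L\^e--Teissier-type polar multiplicity formulas, or via the index theorem for regular holonomic $\mathcal{D}$-modules through Riemann--Hilbert); none of that computation appears in your sketch. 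There is also a normalization point you wave at but do not check: with the usual conventions $CC(Eu_{\overline{U_i}})$ acquires a sign $(-1)^{\dim U_i}$ relative to the conormal cycle, and one must verify that the paper's doubled-shift conventions for perversity and for the multiplicities $c_i$ are exactly the ones that make the formula sign-free before the final display is legitimate. As written, then, the proposal reduces the theorem to an equivalent unproved statement and defers the real work back to the very sources the paper cites.
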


The situation we are interested in is when the pair $X \subset Y$ satisfy two conditions. Without loss of generality we can assume that $X$ is the closure of a stratum $U_{i_0}$. First, suppose that $X$ admits a small resolution $f: Z \to X$. Second, suppose that the characteristic cycle of $\IC{X}$ is irreducible, that is:
\[ CC(\IC{X}) = \left[ \, \overline{
      T^*_{U^{\phantom{\circ}}_{i_0}} Y} \, \right] .\]

\begin{prop} \label{prop:dab-eu} Let $U_j \subset X$.
Under the two hypotheses above, the integer $d_{i_0, j}$ from \S\ref{subsec:csm-by-resolution}
equals the local Euler obstruction $Eu_{X}(u_j)$ where $u_j$ is any point in $U_j$.
\end{prop}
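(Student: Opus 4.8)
The plan is to chain together the two named inputs: Proposition~\ref{prop:decomp} (Goresky--MacPherson), which identifies the fiber Euler characteristic of a small resolution with the stalk Euler characteristic of $\IC{X}$, and Theorem~\ref{thm:microlocal-index} (the microlocal index formula), which expresses stalk Euler characteristics in terms of local Euler obstructions weighted by microlocal multiplicities. The irreducibility hypothesis on $CC(\IC{X})$ is exactly what collapses the sum in the index formula to a single term.

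Here is the order I would carry it out. First, recall that by definition $d_{i_0,j} = \Chi(f^{-1}(u_j))$ for any $u_j \in U_j$, where $f : Z \to X$ is the given small resolution; this is the definition from \S\ref{subsec:csm-by-resolution}, with $i = i_0$ since $X = \overline{U_{i_0}}$. Second, apply Proposition~\ref{prop:decomp} with $s = \dim X$: since $f$ is small, $Rf_*\C_Z[2s] \cong \IC{X}$, and hence $\Chi(f^{-1}(u_j)) = \Chi_{u_j}(\IC{X})$, the stalk Euler characteristic of the intersection cohomology sheaf at $u_j$. Third, apply the microlocal index formula (Theorem~\ref{thm:microlocal-index}) to the sheaf $\sF = \IC{X}$ and the stratum $U_j$:
\[ \Chi_j(\IC{X}) = \suml_{i \in \cS} Eu_{\overline{U_i}}(u_j) \cdot c_i(\IC{X}) . \]
Fourth, invoke the second hypothesis: $CC(\IC{X}) = [\,\overline{T^*_{U_{i_0}}Y}\,]$, which means $c_{i_0}(\IC{X}) = 1$ and $c_i(\IC{X}) = 0$ for all $i \neq i_0$. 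The sum therefore collapses to the single term $Eu_{\overline{U_{i_0}}}(u_j) = Eu_X(u_j)$. Stringing the equalities together gives $d_{i_0,j} = \Chi(f^{-1}(u_j)) = \Chi_{u_j}(\IC{X}) = Eu_X(u_j)$, as claimed.

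A couple of small points need attention rather than genuine obstacles. One must check that the hypothesis $U_j \subset X$ guarantees $u_j$ lies in the support of $\IC{X}$, so that the stalk and the index formula are being applied at a legitimate point; this is immediate since $\supp \IC{X} = X$. One should also make sure the stratification $Y = \cup_{i \in \cS} U_i$ used for the characteristic cycle can be taken to be the same stratification (or a refinement) with respect to which $f$ is a fiber bundle over each stratum and $\IC{X}$ is constructible — a common refinement of the relevant Whitney stratifications works, and $\IC{X}$ is constructible with respect to any Whitney stratification. The only place where anything substantive is hidden is in the two cited theorems themselves; granting those, the proposition is essentially a two-line deduction, so I do not expect a real obstacle — the "hard part," such as it is, is merely being careful that the indexing convention $i_0 \leftrightarrow X$ is consistent across \S\ref{subsec:csm-by-resolution}, Proposition~\ref{prop:decomp}, and Theorem~\ref{thm:microlocal-index}.
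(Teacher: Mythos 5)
Your proposal is correct and follows the same route as the paper: identify $d_{i_0,j}$ with the stalk Euler characteristic $\Chi_j(\IC{X})$ via Proposition~\ref{prop:decomp}, apply the microlocal index formula of Theorem~\ref{thm:microlocal-index}, and use irreducibility of $CC(\IC{X})$ to reduce the sum to the single term $Eu_X(u_j)$. Your additional remarks about the support of $\IC{X}$ and the compatibility of stratifications are sensible housekeeping that the paper leaves implicit.
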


\begin{proof}
	Proposition \ref{prop:decomp} implies that
	\begin{equation} \label{eq:dab-chiIC} 
		\Chi_{j}( \IC{X}) = d_{i_0, j}. 
	\end{equation}
  	On the other hand, Theorem \ref{thm:microlocal-index} gives
	\begin{equation}
    	\label{eq:microlocal-grass}
    	\Chi_j( \IC{X} ) =  
    	\sum_i Eu_{\overline{U_i}}(u_j) \cdot c_i( \IC{X} ) .
	\end{equation} 
	Irreducibility of the characteristic cycle implies
	\[ c_i( \IC{X} ) = \left\{
    \begin{array}{ll}
      1 & \mbox{if} \,\, i = i_0 \\
      0 & \mbox{otherwise}
    \end{array} \right. .\] 
  Hence there is a single non-zero term on the right hand side of
  \eqref{eq:microlocal-grass}. 
  Combining \eqref{eq:dab-chiIC} and \eqref{eq:microlocal-grass} we have:
	\begin{equation}
    	\label{eq:dab-eu}
    	d_{i_0, j}  = Eu_{X}( u_j ).
	\end{equation}
\end{proof}

\subsection{Identification of the \tCM Class via Small Resolution} 
\label{subsec:cm-class}

Proposition \ref{prop:dab-eu} leads to the following result
which we derive by comparing formula \eqref{eq:csm-formula-macp}
to formula \eqref{eq:csm-formula-general}. Let $X \subset Y$ and $Z$ satisfy the hypotheses of the last section: $f:Z \to X$ is a small resolution and $\IC{X}$ has irreducible characteristic cycle. We retain the stratification of $Y$ with $X = \overline{U_{i_0}}$.

\begin{thm} 
	\label{thm:cm-xa}
	The \tCM Class of $X$ equals the push-forward of the total Chern
  class of $Z$:
  \[ \CM( X ) = f_* c( TZ ) = f_* \CSM( Z ) .\]
\end{thm}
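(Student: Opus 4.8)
The plan is to compare the two expressions we have for the \tCSM class of $X$: MacPherson's defining formula \eqref{eq:csm-formula-macp}, which is phrased in terms of the local Euler obstruction matrix $e_{i,j}$, and the resolution formula \eqref{eq:csm-formula-general}, which is phrased in terms of the fiber Euler characteristic matrix $d_{i,j}$ coming from any family of stratification-compatible resolutions. The key observation, already packaged in Proposition \ref{prop:dab-eu}, is that under our two hypotheses these two matrices agree in the row indexed by $i_0$: namely $d_{i_0, j} = Eu_X(u_j) = e_{i_0, j}$ for all $j$ with $U_j \subset X$. So the strategy is to isolate what \eqref{eq:csm-formula-macp} and \eqref{eq:csm-formula-general} each say about the closure $X = \overline{U_{i_0}}$ specifically, and read off the equality of \tCM and pushed-forward Chern classes.

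First I would set up a family of resolutions $\pi_j : Z_j \to \overline{U_j}$ compatible with the stratification of $Y$ in the sense of \S\ref{subsec:csm-by-resolution}, with the crucial choice that $\pi_{i_0} = f$, the given small resolution of $X$; for the other strata any resolution works. Then \eqref{eq:csm-formula-general} applied to $S_{i_0}$ reads $\csm(U_{i_0}) = \sum_{j \le i_0} e_{i_0,j}\, (\pi_j)_* \csm(Z_j)$, where here $e_{i_0,j}$ are entries of the inverse of the $d$-matrix. On the other hand, additivity \eqref{eq:csm-formula-X} gives $\csm(X) = \sum_{U_j \subset X} \csm(U_j)$, and combining this with the triangular change-of-basis between $\{\csm(U_j)\}$ and $\{\csm(\overline{U_j})\}$ I can re-expand $\csm(X)$ in terms of the classes $(\pi_j)_* \csm(Z_j)$. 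Equivalently, and more cleanly, I would just apply the matrix $d$ to both sides: \eqref{eq:pistar-CSM-Z} says $(\pi_{i_0})_* \csm(Z_{i_0}) = \sum_{j \le i_0} d_{i_0, j}\, \csm(U_j)$. Since $\pi_{i_0} = f$ and $Z_{i_0} = Z$ is smooth, the left side is $f_* (c(TZ) \cap [Z]) = f_* \csm(Z)$.

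Next I would bring in MacPherson's side. Formula \eqref{eq:csm-formula-macp} with $i = i_0$ gives $\csm(X) = \csm(\overline{U_{i_0}}) = c_*(\one_{\overline{U_{i_0}}}) = \sum_j f_{i_0, j}\, \cm(\overline{U_j})$, where $f_{i_0,j}$ are entries of the inverse of the Euler obstruction matrix $e_{i,j}$ (MacPherson's $e$, i.e. $e_{i,j} = Eu_{\overline{U_i}}(u_j)$). Inverting, this says $\cm(X) = \cm(\overline{U_{i_0}}) = \sum_j e_{i_0,j}\, \csm(\overline{U_j})$ — that is, applying the Euler-obstruction matrix to the vector of \tCSM classes recovers the vector of \tCM classes, row by row. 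Now Proposition \ref{prop:dab-eu} tells us $d_{i_0,j} = Eu_X(u_j) = Eu_{\overline{U_{i_0}}}(u_j) = e_{i_0,j}$ for all relevant $j$, so the row-$i_0$ combination $\sum_j d_{i_0,j}\, \csm(U_j)$ that computes $f_*\csm(Z)$ via \eqref{eq:pistar-CSM-Z} is literally the same integer combination (after accounting for the elementary triangular passage between $\csm(U_j)$ and $\csm(\overline{U_j})$, which is the same on both sides since it only involves inclusions of strata, not Euler obstructions) as the one that computes $\cm(X)$ via the inverted MacPherson formula. Hence $\cm(X) = f_*\csm(Z) = f_*(c(TZ) \cap [Z])$, and since $Z$ is smooth the right-hand side also equals $f_* c(TZ)$ in the sense of the statement.

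The step I expect to require the most care is the bookkeeping reconciling the two bases $\{\csm(U_j)\}$ (open strata) and $\{\csm(\overline{U_j})\}$ (closed strata, as used in \eqref{eq:csm-formula-macp}), and making sure that "row $i_0$ of the Euler-obstruction matrix equals row $i_0$ of the $d$-matrix" really does force the resulting \tCM class and pushforward to coincide, rather than merely some weaker relation. The cleanest way to handle this is to observe that both \eqref{eq:pistar-CSM-Z}/\eqref{eq:csm-formula-general} and \eqref{eq:csm-formula-macp}/its inverse express the passage between the \tCSM classes of the $\overline{U_j}$ and a second family of classes ($(\pi_j)_*\csm(Z_j)$ on one side, $\cm(\overline{U_j})$ on the other) via an upper-triangular integer matrix that is \emph{the Euler obstruction matrix in its $i_0$-row by Proposition \ref{prop:dab-eu}}; matching the $i_0$-rows and using that both matrices are upper triangular with $1$'s on the diagonal (so the $i_0$-row determines the $i_0$-coordinate of the output) yields the identity. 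I would also remark that the same comparison identifies $Eu_X(u_j)$ itself with the fiber Euler characteristic $\Chi(f^{-1}(u_j))$, which is the content already extracted in Proposition \ref{prop:dab-eu} and is worth restating as the conceptual heart of the theorem.
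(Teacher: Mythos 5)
Your strategy is essentially the paper's: compare MacPherson's formula \eqref{eq:csm-formula-macp} with the resolution formulas \eqref{eq:pistar-CSM-Z}/\eqref{eq:csm-formula-general} and feed in Proposition \ref{prop:dab-eu}, after arranging $\pi_{i_0}=f$. You run it in the ``forward'' direction (apply row $i_0$ of the $d$-matrix to the vector of cell CSM classes) instead of inverting both matrices as the paper does; this is if anything tighter, since it uses only the row-$i_0$ identity $d_{i_0,j}=Eu_X(u_j)$ that Proposition \ref{prop:dab-eu} actually supplies, whereas the paper's manipulation $Z=C\cdot E^{-1}=M$ tacitly treats the full matrices as equal.

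However, one displayed formula in your argument is wrong, and it is exactly the point you flagged as delicate. Inverting MacPherson's relation does not give $\cm(X)=\sum_j e_{i_0,j}\,\csm(\overline{U_j})$. Since $Eu(\one_{\overline{U_{i_0}}})=\sum_j Eu_{\overline{U_{i_0}}}(u_j)\,\one_{U_j}$ as constructible functions (indicator functions of the \emph{open} strata), applying $c_*$ gives
\[ \cm(X)=\cm(\overline{U_{i_0}})=\sum_j Eu_{\overline{U_{i_0}}}(u_j)\,\csm(U_j), \]
with the CSM classes of the open cells on the right (note that \eqref{eq:csm-formula-macp} is used in this form, with the open cell $U_{i_0}$ on the left-hand side, in the paper's own proof). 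This has exactly the same shape as $f_*\csm(Z)=\sum_j d_{i_0,j}\,\csm(U_j)$ from \eqref{eq:pistar-CSM-Z}, so Proposition \ref{prop:dab-eu} finishes the proof at once; the open-versus-closed ``reconciliation'' in your last paragraph is unnecessary, and as phrased (``the same triangular passage on both sides'') it does not make sense, since only one of your two expressions involves closures. With that single formula corrected, your argument is complete and arguably cleaner than the paper's matrix computation.
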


\begin{proof}
	MacPherson's definition of the \tCSM class as expressed in equation
	\eqref{eq:csm-formula-macp} is
	\begin{equation} \label{eq:macp-eq}
		\CSM( U_{i_0} ) = \suml_{j} f_{i_0, j} \cdot \CM( \overline{U_j} ) .
	\end{equation}
	On the other hand, equation \eqref{eq:csm-formula-general} 
	in our context is
	\begin{equation} \label{eq:my-eq} 
          \CSM( X ) = \suml_{j}
          e_{i_0, j} \cdot (\pi_j)_* \CSM( Z_j ) ,
	\end{equation}
	where $\pi_j : Z_j \to \overline{U_j}$ are compatible resolutions in the sense of \ref{subsec:csm-by-resolution}.
	
	Recall from \S~\ref{subsec:csm-defn} that the integers
        $f_{i, j}$ are entries of the matrix inverse to that
        with entries $Eu_{\overline{U_i}}(u_j)$. Also recall from 
        \S~\ref{subsec:csm-by-resolution}
        that the integers $e_{i, j}$ are entries of the
        matrix inverse to that with entries $d_{i, j}$. Now, proposition
        \ref{prop:dab-eu} implies that $f_{i_0, j} =
        e_{i_0, j}$. It follows by linear algebra that
	\[ \CM(X) = (\pi)_* \CSM(Z) .\]
	Indeed, if we define matrices 
	\begin{enumerate}
		\item $C$ with $(i, j)$ entry the coefficient of
	$[\overline{U_j}]$ in $\CSM(U_i)$
		\item $M$  with $(i, j)$ entry the coefficient of
	$[\overline{U_j}]$ in $\CM(\overline{U_i})$
		\item  $Z$ with $(i, j)$ entry the coefficient of
	$[\overline{U_j}]$ in $(\pi_i)_* \CSM(Z_i)$
	    \item $E = (e_{\alpha, \beta}) = (f_{\alpha, \beta})$
	\end{enumerate}
	then \eqref{eq:macp-eq} is equivalent to
	\[ C = M \cdot E \]
	and \eqref{eq:my-eq} is equivalent to
	\[ C = Z \cdot E .\] The matrix $E$ is invertible since it is
        upper triangular with 1's along the diagonal.  Solving, we have
        $Z = C \cdot E^{-1} = M$ which is equivalent to the statement of
        the proposition.
\end{proof}

\begin{rmk} \label{rmk:stringy}
Suppose that $X$ admits a small crepant resolution and has irreducible characteristic cycle. It is known that the pushforward of the Chern class of a crepant resolution is independant of the resolution and the class obtained is called the \emph{stringy Chern class} of $X$, see \cite{Alu05, deFernex}. In this situation, Theorem \ref{thm:cm-xa} implies that the stringy Chern class and the \tCM class of $X$ coincide.
\end{rmk}

  
\section{Application to Schubert Varieties}
\label{sec:reso}

\subsection{The Grassmannian}
\label{subsec:schub}

We consider the complex Grassmannian
$\GR{k}{n}$, which is the homogeneous space $\GL_n(\C)/P$, where $P$ is
the standard parabolic subgroup consisting of block $k \times k$,
$(n-k) \times (n-k)$ upper triangular matrices. A Schubert cell is an orbit in
$\GR{k}{n}$ of the Borel subgroup $B \subset \GL_n(\C)$ of upper triangular matrices. A
Schubert variety is the closure of a Schubert cell in the Zariski
topology. Schubert cells and varieties in $\GR{k}{n}$ are parametrized
by partitions $\alpha = (\alpha_1 \ge \ldots \ge  \alpha_k)$ such that
$\alpha_1 \le n-k$. We use the notation $\P{k}{n-k}$ to denote the
set of all such partitions. Given a partition $\alpha \in \P{k}{n-k}$,
we denote the corresponding Schubert cell by $\xac$ and its
closure by $\xa$.

Schubert cells partition the Grassmannian and provide a natural
basis for the Chow group (or homology) of $\GR{k}{n}$:
$$ A_* \GR{k}{n} \cong \bigoplus_{\alpha} \mathbb{Z} \cdot [X_\alpha] .$$
In this decomposition, the basis element $[\xa]$ lives in $A_{\abs{\alpha}} \GR{k}{n}$ where $\abs{\alpha} = \sum_i \alpha_i$.
The CSM class of a Schubert variety may be
expressed in this basis as
\begin{equation}
\label{eq:a-priori-csm}
\csm(\xa) = [X_\alpha] + \sum_{\beta < \alpha} \gamma_{\alpha,
  \beta} [X_\beta] .
\end{equation} 
In these terms, the main goal of the remainder of the paper is to understand the
coefficients $\gamma_{\alpha,\beta}$.

In the Grassmannian, Equation \eqref{eq:csm-formula-general} from \S\ref{subsec:csm-by-resolution} is interpreted as
follows. The stratum $S_i$ is a Schubert cell $\xac$. Zelevinsky's
construction, which is recalled in \S~\ref{sec:reso}, provides the
family of resolutions $\pa : \za \to \xa$ and the integers $d_{i,j}$
become $\dab$, the Euler characteristic of the fiber of $Z_\alpha$
over a point in $\xbc$. These integers are calculated using
Zelevinsky's formula which is stated in Theorem
\ref{thm:zel-formula}. Similarly, the matrix inverse to the matrix of
integers $\dab$ has entries denotes $\eab$. With this notation,
\eqref{eq:csm-formula-general} becomes
\begin{equation} \label{eq:csm-formula-grassmannian}
  \CSM(\xac) = \sum_{\beta \le \alpha} \eab \cdot \pas \CSM(\zb) .
\end{equation}
In Section \ref{subsec:csm-reso} we describe $T\zb$ as a vector bundle in terms of pullbacks of
universal bundles and use this to compute
$\csm(\zb) = c(T\za) \cap [\zb]$. 

\subsection{Resolution of Singularities}
\label{sec:reso}
\label{subsec:zel-reso}

We recall Zelevinsky's resolution of singularities for
a Schubert variety in the Grassmannian. In \S~\ref{subsec:csm-reso} we
describe the tangent bundle of a Zelevinsky resolution inductively and
give two formulas for its total Chern class in terms of pullbacks of
universal bundles from Grassmannians. Since the resolution is smooth,
this immediately gives a formula for its CSM class. The reader may refer to \cite{Zel} and \cite{BFL} for details on the construction and proofs of the main results below.

Let $V \cong \C^n$ and $V_\bullet = (V_0 \subset V_1 \subset \cdots
\subset V_n)$ be a fixed complete flag in $V$.  Let $X_\alpha \subset
\GRb{k}{V}$ be a Schubert variety relative to $V_\bullet$ and indexed
by the partition $\alpha$. 

We use peak notation for a partition $\alpha = (\alpha_1 \ge \cdots
\ge \alpha_k)$.
Let $a_i$ be the multiplicity of the $i$-th distinct part of $\alpha$
ordered from smallest to largest. The $b_i$ are the unique positive
integers such that the $i$-th distinct part of $\alpha$ is $b_0 +
\cdots + b_{i-1}$. In peak notation we write $\alpha = [a_1, \ldots,
  a_m ; b_0, \ldots, b_{m-1}]$. For $0 \le i \le m$, let $V^i = V_{a_1
  + \cdots + a_i + b_0 + \cdots + b_{i-1}}$. The Schubert variety
$X_\alpha$ is then defined by intersection conditions:
$$X_\alpha = \left\{ U \in \GR{k}{n} \mid \dim U \cap V^i \ge a_1 +
\cdots + a_i \quad \text{for} \quad 0 \le i \le m \right\} .$$
It is common to visualize $\alpha$ in peak notation by a piece-wise
linear function $f_\alpha$ defined on the real line, see Figure
\ref{fig:peak-diagram}. The local maxima of $f_\alpha$ are called
\emph{peaks} (labeled $P_i$) and the local minima are called
\emph{depressions} (labeled $Q_i$), see \cite{Jones:Thesis}, Ch. 5 for a
thourough discussion with examples. 

\begin{figure}[ht]
\includegraphics{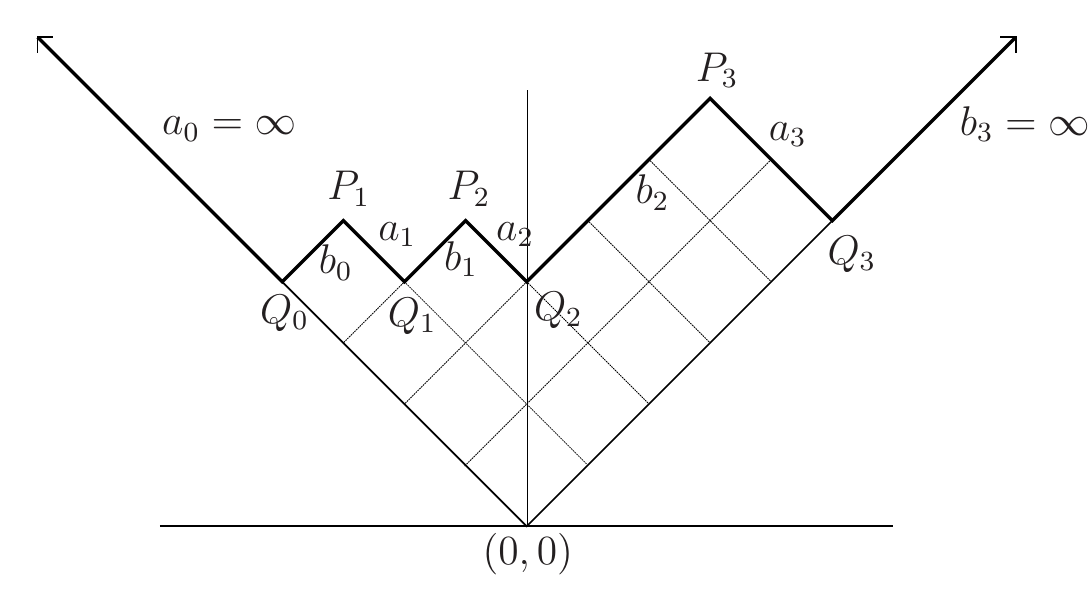}
\caption{Peak diagram for $\alpha = [1,1,2; 1,1,3] = (5,5,2,1)$.}
\label{fig:peak-diagram}
\end{figure}

Let $s$ be an ordering of the peaks of $\alpha$, i.e. $s$ is a permutation of
$\{1, \ldots, m\}$. For each choice of permutation, Zelevinsky has
constructed a variety over $X_\alpha$:
$$ \zap \overset{\pap}{\longrightarrow} X_\alpha
.$$
$\zap$ is a subset of a certain product of
Grassmannians:
$$ \zap \subset \GRb{k_1}{V} \times \GRb{k_2}{V} \times \cdots \times
\GRb{k_m}{V} ,$$ where $k_m = k$. The map $\pap$ is the restriction to
$\zap$ of projection to the last factor. For each $i \in \{1, \ldots,
m\}$, two subspaces are defined which constrain the points in the
$\GRb{k_i}{V}$ factor. 
\begin{equation} \label{eq:zel-defn}
 \zap = \left\{ U_\bullet \in \prod_i \GRb{k_i}{V} \mid \lW_i \subset U_i \subset \rW_i
\right\} .
\end{equation}

The definition of $\lW_i$ and $\rW_i$ is inductive. Let $j_1 = s(1)$,
$k_1 = \dim V^{j_1-1} + a_{j_1}$, and $U_1$ denote a point in the
Grassmannian $\GRb{k_1}{V}$ with the property that
$$V^{j_1-1} \subset U_1 \subset V^{j_1} .$$ 
Set $\lW_1 = V^{j_1-1}$ and $\rW_1 = V^{j_1}$. For such a $U_1$,
consider the new partial flag $^1V^\bullet = ( V^0 \subset \cdots
\subset V^{j_1-2} \subset U_1 \subset V^{j_1+1} \subset \cdots \subset
V^m)$. For $i \ge 2$, $\GRb{k_i}{V}$, $\lW_i$, and $\rW_i$
are defined inductively using the flag  $^1V^\bullet$, a new partition $\alpha'$ 
which has $m-1$ peaks, and the permutation $s'$ of $\{ 1, \ldots, m-1
\}$ induced by $s$ and the order preserving bijection $\{ 1, \ldots, \hat{j_1},
\ldots, m \} \iso \{ 1, \ldots, m-1 \}$. The partition $\alpha'$ is obtained by removing peak
$P_{j_1}$ from $\alpha$ in the following way. We replace the
pair $a_{j-1}, a_{j}$ (in peak notation) by their sum $a_{j-1} + a_{j}$ and similarly replace
$b_{j-1},b_{j}$ by $b_{j-1} + b_{j}$. For the extreme cases, $j=1$ or
$m$, we use the convention that $a_0 = b_m = \infty$ and drop these
from the notation. For example, removing the 1st peak from $[1,1,2; 1,
1, 3]$ results in the partition $[1,2; 2,3]$, while removing the 2nd
peak results in the partition $[2, 2; 1, 4]$.

The result of Zelevinsky's construction is a closed subvariety of
$\prod_i \GRb{k_i}{V}$ defined by incidence relations $\lW_i \subset
U_i \subset \rW_i$. By construction, $\lW_i$ and $\rW_i$ are either
points $U_j \in \GRb{k_j}{V}$ for $j < i$ or they are subspaces in the
fixed partial flag $V^\bullet$ determined by $\alpha$.

\begin{ex} \label{ex:zel-reso-21}
  Let $V = \C^4$ and the fixed flag be $V_\bullet = (0 \subset \C^1 \subset \C^2
  \subset \cdots \subset \C^4)$. For $\alpha = (2,1)$, the partial
  sub-flag is $V^\bullet = (0 \subset \C^2 \subset \C^4)$. The results of
  Zelevinsky's construction when $s = \id$ (the identity permutation)
  and $s = w_0$ (the order reversing permutation) are:
$$
\begin{array}{l}
Z_{(2,1), \id} = \left\{ (U_1, U_2) \in \GR{1}{4} \times \GR{2}{4}
  \mid 0 \subset U_1 \subset \C^2, \,\, U_1 \subset U_2 \subset \C^4
\right\} \\
\\
Z_{(2,1), w_0} = \left\{ (U_1, U_2) \in \GR{3}{4} \times \GR{2}{4}
  \mid \C^2 \subset U_1 \subset \C^4, \,\, 0 \subset U_2 \subset U_1
\right\} .
\end{array}
$$
\end{ex}

The group $\GL(V)$ acts on each $\GRb{k_i}{V}$ and diagonally on the
product $\prod_i \GRb{k_i}{V}$. It is clear from description above
that the Borel subgroup $B$  which fixes $V_\bullet$ leaves
$\zap$ stable. Also, the map $\pap$ is clearly $B$-equivariant.

We now summarize the main theorems of \cite{Zel}.

\begin{thm} \label{thm:zel-thm} (Zelevinsky)
\begin{enumerate}
\item For any $\alpha$ and  $s$, $\zap$ is a smooth projective variety.
\item For a fixed $\alpha$ and any $s$, $\pap : \zap \to \GR{k}{n}$ is a proper map with
  image $\xa$ and is an isomorphism over $\xac$, in particular it
  is a resolution of singularities.
\item For all $\alpha$ there exists a permutation $s$ such that 
$\pap : \zap \to \xa$ is a small resolution.
\end{enumerate}
\end{thm}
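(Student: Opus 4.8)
The plan is to exhibit an explicit choice of the permutation $s$ for which the fiber dimension of $\pap : \zap \to \xa$ satisfies the smallness inequality
\[ \codim \{ x \in \xa \mid \dim \pap^{-1}(x) \ge i \} > 2i \qquad (i > 0), \]
and to verify this inequality stratum-by-stratum using Zelevinsky's explicit fiber-dimension formula (the one quoted below as Theorem \ref{thm:zel-formula}). First I would recall that the map $\pap$ is $B$-equivariant and $\xa$ is a union of finitely many Schubert cells $\xbc$ with $\beta \le \alpha$, so the fiber of $\pap$ over every point of a given cell $\xbc$ has the same dimension $f_{\alpha,\beta}(s)$, a combinatorial quantity depending only on $\alpha$, $\beta$, and the ordering $s$. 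Thus the smallness condition becomes the purely combinatorial requirement that for every $\beta < \alpha$,
\[ 2\, f_{\alpha,\beta}(s) < \codim_{\xa} \xbc = \abs{\alpha} - \abs{\beta}. \]

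The main step is then to produce a good $s$. Following Zelevinsky, I would take $s$ to be the ordering of the peaks that always removes an \emph{extreme} peak at each stage of the inductive construction (e.g. the ``zig-zag'' or ``staircase'' order in which one peels off the leftmost- or rightmost-remaining peak, chosen according to which of $a_1,\dots,a_m$ or the tail is being absorbed). The point of such an $s$ is that at each inductive stage the new Grassmannian factor $\GRb{k_i}{V}$ is cut out by the two subspaces $\lW_i, \rW_i$ which are \emph{as tight as possible} — one of them is a previously chosen $U_j$ and the other is adjacent in the flag $V^\bullet$ — so the fibers of the successive projections are themselves small sub-Grassmannians, and the codimension of the locus where a fiber of the composite $\pap$ jumps grows faster than twice the jump. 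Concretely, one feeds this $s$ into the fiber-dimension formula and estimates: each unit of fiber dimension over $\xbc$ forces the partition $\beta$ to differ from $\alpha$ in at least two boxes' worth of the relevant sub-diagram, giving the strict inequality $2 f_{\alpha,\beta}(s) < \abs{\alpha}-\abs{\beta}$. I would organize this as an induction on the number $m$ of peaks: removing the extreme peak $P_{j_1}$ identifies the fiber of $\pap$ over a point of $\xbc$ with (a Grassmannian bundle over) a fiber of the Zelevinsky map $\pi_{\alpha',s'}$ for the smaller partition $\alpha'$, and the codimension count splits as the codimension contributed by the $\GRb{k_1}{V}$ factor plus the codimension inherited from the $\alpha'$-stratum; applying the inductive hypothesis to the latter and a direct estimate to the former closes the argument.

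The hard part will be the bookkeeping in the codimension estimate: one must match up, via the peak diagram of $\alpha$, exactly how a drop in fiber dimension over $\xbc$ is ``paid for'' by the difference $\abs{\alpha}-\abs{\beta}$, and in particular check that the extreme-peak ordering never produces a fiber that is too large relative to its stratum's codimension (the inequality is tight-ish, so a clumsier ordering genuinely fails). This is where I would lean most heavily on Zelevinsky's combinatorics and on the description of $\lW_i \subset U_i \subset \rW_i$: the key structural fact to extract and use is that for the extreme-peak $s$, the fiber of each projection $\zap \cap (\text{first } i \text{ factors}) \to \zap \cap (\text{first } i-1 \text{ factors})$, restricted over a point of $\xbc$, is a Grassmannian of dimension at most half the codimension it contributes, with equality only in degenerate cases that do not accumulate. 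Parts (1) and (2) are quoted from \cite{Zel} and require no new argument here.
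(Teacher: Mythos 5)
Your reduction of smallness to the cell-by-cell inequality $2 f_{\alpha,\beta}(s) < \abs{\alpha}-\abs{\beta}$ is fine (and note the paper itself offers no proof to compare against: Theorem \ref{thm:zel-thm} is quoted from \cite{Zel}). But as a proof of part (3) your sketch has a genuine gap exactly where the content lies: the choice of $s$ and the verification of the inequality. First, ``always peel off the leftmost- or rightmost-remaining peak'' is not Zelevinsky's rule and, as stated, is either too vague to check (``chosen according to which of $a_1,\dots,a_m$ or the tail is being absorbed'' is not a criterion) or false. Which extreme you remove genuinely matters: the paper's own computations in \S\ref{subsec:cm-computation} record that for $\alpha=(3,1)$ and $(3,3,1)$ in $\GR{3}{6}$ the order-reversing choice $s=w_0$ is \emph{not} small while $s=\id$ is. A local count over the stratum of relative depth one at an interior depression shows the binding condition for removing a peak first is a comparison of the adjacent parameters $a_j$ and $b_j$ (e.g.\ for two peaks, removing $P_1$ first is small iff $a_1\le b_1$, removing $P_2$ first iff $b_1\le a_1$), and for a partition such as $(4,2,1,1)=[2,1,1;1,1,2]$ both extreme-peak starts violate their conditions, so any admissible ordering must begin with the \emph{interior} peak. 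So the heart of the argument --- which peak to remove at each stage, and why the resulting ordering always exists --- is missing, and your proposed rule would have to be replaced by the correct one.

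Second, you lean on Theorem \ref{thm:zel-formula} as a ``fiber-dimension formula,'' but it computes the Euler characteristics $\Chi(\pap^{-1}(U))$, not $\dim \pap^{-1}(U)$; an Euler characteristic does not bound the dimension, so it cannot verify $2 f_{\alpha,\beta}(s) < \abs{\alpha}-\abs{\beta}$. One instead needs the explicit description of the fibers as towers of Schubert-type subvarieties of Grassmannians (coming from the incidence relations $\lW_i\subset U_i\subset \rW_i$) and a matching computation of the codimensions $\abs{\alpha}-\abs{\beta}$ in terms of the depth vector $(c_0,\dots,c_m)$. You defer precisely this bookkeeping (``the hard part will be\dots''), so what you have is a plausible plan, not a proof; if the intent is simply to invoke \cite{Zel}, as the paper does for all three parts, then no reconstruction is needed, but the reconstruction as written would not go through.
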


For $\alpha, s$ as above, consider a partition $\beta \le \alpha$. 
Let $U \in \xbc \subset \GRb{k}{V}$. The intersection conditions that
a point $W \in \xa$ satisfies with respect to $V^\bullet$ are $\dim (W \cap
V^i) \ge a_1 + \cdots + a_i$ for all $i=0, \ldots, m$. In particular,
$U$ satisfies these conditions.
Define a sequence of integers $(c_0, c_1, \ldots, c_m)$ , 
called the relative depth vector for the pair
$(\alpha, \beta)$, by $c_i = \dim ( U \cap V^i ) - (a_1 + \ldots +
a_i)$. The following formula calculates
$d_{\alpha,s}(\beta) := \Chi( \pap^{-1}(U) )$ in terms of $\alpha$, $s$, and
the vector $(c_0, \ldots, c_m)$.

\begin{thm} \label{thm:zel-formula} (Zelevinsky) Let $\alpha = [a_i ;
  b_i]$ and let $j = s(1)$, the
  index of the first peak to be removed from $\alpha$ in the
  construction of $\zap$. Let $\alpha'$ be the partition which results
  from this peak removal and $s'$ be the peak ordering for $\alpha'$
  induced by $s$. Then,
$$ d_{\alpha,s}(\beta)  = 
\suml_{d \ge 0} \binom{a_j+c_j-c_{j-1}}{c_j - d}
\binom{b_{j-1} - c_j + c_{j-1}}{c_{j-1}-d} \cdot d_{\alpha', s'}(\gamma(d)) $$
where $\gamma(d)$ is any partition such that $\gamma(d) \le \alpha'$ and
the pair $(\alpha', \gamma(d))$ has relative depth vector $(c_0, \ldots,
c_{j-2}, d, c_{j+1}, \ldots, c_m)$. 
\end{thm}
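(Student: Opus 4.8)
The plan is to prove the formula by directly analyzing the fiber $\pap^{-1}(U)$ for a point $U \in \xbc$, exploiting the inductive structure of Zelevinsky's construction and the multiplicativity of the Euler characteristic in fiber bundles. Recall that $\zap$ sits inside $\prod_{i=1}^m \GRb{k_i}{V}$, with the last coordinate being $\pap$. Fixing $U_m = U$, the fiber $\pap^{-1}(U)$ is the locus of tuples $(U_1, \ldots, U_{m-1})$ satisfying all the incidence relations $\lW_i \subset U_i \subset \rW_i$. The key structural fact I would use is that the very first constraint only involves $U_1$ and the ambient fixed flag: it asks $V^{j-1} \subset U_1 \subset V^j$ where $j = s(1)$. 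Projecting $\pap^{-1}(U)$ onto the $U_1$-coordinate gives a map whose image is the set of admissible $U_1$'s and whose fibers are copies of (a fiber of) the Zelevinsky resolution $Z_{\alpha', s'}$ over $U$ — since, once $U_1$ is chosen, the remaining constraints are exactly those defining $Z_{\alpha', s'}$ built from the modified flag ${}^1V^\bullet$, the modified partition $\alpha'$, and the induced ordering $s'$.

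The central computation is then twofold. First I would identify the space of admissible $U_1$: the condition $V^{j-1} \subset U_1 \subset V^j$ together with the requirement that $U$ be reachable forces $\dim(U_1 \cap U)$ to take a definite value, and one must count how many $d_1$-dimensional flags of the appropriate type sit between $U \cap V^{j-1}$-type data and $V^j$. Running through the combinatorics of the depth vector, the admissible $U_1$'s compatible with a fixed "intermediate depth" $d$ at level $j-1$ form a product of two Grassmannians (one governing $U_1 \cap V^{j-1}$ inside a space of dimension controlled by $a_j, c_j, c_{j-1}$, the other governing the image of $U_1$ in $V^j / V^{j-1}$), and the two binomial coefficients $\binom{a_j + c_j - c_{j-1}}{c_j - d}$ and $\binom{b_{j-1} - c_j + c_{j-1}}{c_{j-1} - d}$ are exactly the Euler characteristics (= numbers of $\C$-cells, i.e. binomial coefficients) of those two Grassmannians. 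Second, once $U_1$ with intermediate depth $d$ is fixed, the relative depth vector of the pair $(\alpha', \gamma)$ seen by the remaining factors is precisely $(c_0, \ldots, c_{j-2}, d, c_{j+1}, \ldots, c_m)$, so the residual fiber has Euler characteristic $d_{\alpha', s'}(\gamma(d))$ by the inductive hypothesis (the base case $m = 1$ being trivial, as $\zap \to \xa$ is then an isomorphism and both sides equal $1$). Summing over the stratification of the space of admissible $U_1$ by the value of $d$, and using that $\Chi$ is additive over a locally closed stratification and multiplicative in locally trivial fibrations (in the complex topology), yields
\[
\Chi(\pap^{-1}(U)) = \suml_{d \ge 0} \binom{a_j + c_j - c_{j-1}}{c_j - d} \binom{b_{j-1} - c_j + c_{j-1}}{c_{j-1} - d} \cdot d_{\alpha', s'}(\gamma(d)),
\]
which is the claim.

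The main obstacle, I expect, is the bookkeeping in the first half of the central computation: correctly tracking how the linear-algebra data of $U$ (encoded in the depth vector $(c_0, \ldots, c_m)$) interacts with the choice of $U_1$ and with the passage from $V^\bullet$ to ${}^1V^\bullet$, and verifying that the count of admissible $U_1$ at a fixed intermediate depth $d$ really is the product of the two stated binomial coefficients rather than something off by a shift in the indices. One must be careful about the edge conventions $a_0 = b_m = \infty$ when $j = 1$ or $j = m$, and about the fact that $\gamma(d)$ is only well-defined up to the equivalence "has the same relative depth vector with respect to $\alpha'$" — so one needs that $d_{\alpha', s'}(\gamma)$ depends only on that depth vector, which follows from Zelevinsky's formula itself applied inductively (or can be taken as part of the inductive statement). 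A clean way to organize this is to phrase the induction on $m$ with the statement "$\Chi(\pap^{-1}(U))$ depends only on $(c_0, \ldots, c_m)$ and is given by the displayed sum", so that the well-definedness of $\gamma(d)$ is subsumed into the inductive hypothesis rather than requiring a separate argument. Since this theorem is quoted from \cite{Zel}, in the paper itself I would simply cite it; the above is the argument one would reconstruct.
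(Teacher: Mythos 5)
The paper offers no proof of this statement to compare against: Theorem \ref{thm:zel-formula} is quoted from \cite{Zel} without argument, exactly as you anticipate in your closing sentence. Your reconstruction follows the same line as Zelevinsky's original proof: induct on the number of peaks, project the fiber $\pap^{-1}(U)$ to the $U_1$-coordinate, stratify the admissible $U_1$ (those with $V^{j-1}\subset U_1\subset V^{j}$) by the normalized intersection dimension $d=\dim(U_1\cap U)-(a_1+\cdots+a_j)$, observe that over each stratum the residual constraints are exactly those defining $Z_{\alpha',s'}$ for the modified flag ${}^1V^\bullet$, and conclude by additivity of $\Chi$ over the strata together with constancy of the residual fiber's Euler characteristic on each stratum (which, as you say, is best packaged by strengthening the induction to ``$\Chi(\pap^{-1}(U))$ depends only on the depth vector and equals the displayed sum''). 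This is the right approach and it does go through.

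One small imprecision worth flagging, since it is the ``bookkeeping'' you defer: the stratum of admissible $U_1$ at fixed $d$ is not literally a product of two Grassmannians. Passing to $\bar U_1=U_1/V^{j-1}\in\GRb{a_j}{V^j/V^{j-1}}$ and setting $A=((U\cap V^j)+V^{j-1})/V^{j-1}$, one has $\dim A=a_j+c_j-c_{j-1}$ and the stratum is the locally closed locus $\{\bar U_1:\dim(\bar U_1\cap A)=a_j+d-c_{j-1}\}$, which is an affine bundle over $\GRb{a_j+d-c_{j-1}}{A}\times\GRb{c_{j-1}-d}{(V^j/V^{j-1})/A}$; its Euler characteristic is therefore $\binom{a_j+c_j-c_{j-1}}{a_j+d-c_{j-1}}\binom{b_{j-1}-c_j+c_{j-1}}{c_{j-1}-d}=\binom{a_j+c_j-c_{j-1}}{c_j-d}\binom{b_{j-1}-c_j+c_{j-1}}{c_{j-1}-d}$, confirming the stated coefficients (the first identity uses the symmetry of the binomial coefficient). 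With that count made explicit, and the check that the depth coordinates at positions other than the merged one are unaffected by replacing $V^{j-1}$ with $U_1$, your plan is a complete proof; as a citation of \cite{Zel} it is in any case all the paper requires.
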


\begin{rmk}
From the observations made in Proposition \ref{prop:dab-eu} and \S~\ref{subsec:cm-computation} it follows that  if $\pap$ is a small
resolution, then 
$d_{\alpha,s}(\beta)$, regarded as a constructible function on $\xa$,
coincides with the local Euler obstruction. 
\end{rmk}

\begin{prop} \label{prop:zel-chi-fiber}
Let $\pap : \zap \to \xa$ be a small resolution. Then $U \in \xbc
\subset \xa$ is a smooth point of $\xa$ if and only if any of the following
equivalent conditions hold:
\begin{enumerate}
\item $\pap^{-1}(U)$ is a point
\item $d_{\alpha, s}(\beta) = 1$
\item The relative depth vector for the pair $(\alpha, \beta)$ is
  $(0,0,\ldots,0)$.
\end{enumerate}
\end{prop}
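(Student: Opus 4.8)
The plan is to establish the chain of equivalences $(3) \Rightarrow (2) \Rightarrow (1) \Rightarrow$ "$U$ is smooth", and then close the loop by showing "$U$ smooth" $\Rightarrow (3)$. The microlocal machinery of \S\ref{subsec:cm-class} does most of the work once we are in the situation where $\pap$ is small and — crucially — the characteristic cycle of $\IC{\xa}$ is irreducible. The irreducibility of $CC(\IC{\xa})$ for Schubert varieties in the Grassmannian is a known fact (it holds precisely because Schubert varieties in $\GR{k}{n}$ admit small resolutions, or alternatively by the results on Kazhdan--Lusztig / local intersection cohomology of Grassmannian Schubert varieties); I would either cite this or deduce it from Proposition \ref{prop:decomp} together with the existence of a small resolution. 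Granting this, Proposition \ref{prop:dab-eu} applies and tells us $d_{\alpha,s}(\beta) = Eu_{\xa}(u_j) = Eu_{\xa}(U)$.

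First I would do the easy implications. For $(3) \Rightarrow (2)$: if the relative depth vector $(c_0, \ldots, c_m)$ is identically zero, then plugging $c_j = c_{j-1} = 0$ into Zelevinsky's formula (Theorem \ref{thm:zel-formula}) the binomial factors $\binom{a_j}{-d}\binom{b_{j-1}}{-d}$ force $d = 0$, leaving $d_{\alpha,s}(\beta) = d_{\alpha',s'}(\gamma(0))$ where $(\alpha',\gamma(0))$ again has zero relative depth vector; by induction on the number of peaks $m$ (base case $m=1$, where $Z_{\alpha,s} \iso \xa$ and the formula is trivial) this equals $1$. For $(2) \Rightarrow (1)$: if $d_{\alpha,s}(\beta) = \Chi(\pap^{-1}(U)) = 1$, then since $\pap^{-1}(U)$ is a projective variety and each Zelevinsky fiber is itself (by the recursive structure, iterated Grassmannian-bundle-like) connected with no odd cohomology, Euler characteristic $1$ forces it to be a point — or more cleanly, invoke Proposition \ref{prop:decomp}, which gives $\Chi(\pap^{-1}(U)) = \sum_i (-1)^i \dim H^i(\pap^{-1}(U)) = 1$ with all contributions non-negative and $H^0 = \C$, hence all higher cohomology vanishes and the fiber is a point. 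For $(1) \Rightarrow$ smooth: $\pap$ is an isomorphism over $\xac$ by Theorem \ref{thm:zel-thm}(2), so the locus where $\pap$ fails to be an isomorphism is contained in $\xa \setminus \xac$; if additionally $\pap^{-1}(U)$ is a single point, then a small resolution with a point fiber over $U$ is a local isomorphism near $U$ (here one uses that $Rf_*\C_Z[2s] \cong \IC{\xa}$, so the stalk of $\IC{\xa}$ at $U$ is concentrated as if $\xa$ were smooth, which by the characterization of the smooth locus via $\IC$ forces $U$ to be a smooth point) — alternatively, and most efficiently, use the microlocal route directly.

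The microlocal route gives a uniform argument: by Proposition \ref{prop:dab-eu}, $d_{\alpha,s}(\beta) = Eu_{\xa}(U)$, and it is a standard fact (MacPherson, p.\ 426, already cited in \S\ref{subsec:csm-defn}) that $Eu_{\xa}(U) = 1$ whenever $U$ is a smooth point; conversely, for Schubert varieties one has the criterion that $U$ is smooth \emph{iff} $Eu_{\xa}(U) = 1$ — this is exactly where irreducibility of the characteristic cycle is used, since it forces $\IC{\xa}$ to be (a shift of) the constant sheaf on a neighborhood of $U$ precisely when $Eu_{\xa}(U)=1$, and by Kashiwara's theorem (or by the Goresky--MacPherson characterization of smoothness via local intersection cohomology) a point where $\IC{\xa}$ is the constant sheaf is a smooth point. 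So $(2) \Leftrightarrow$ smooth, which together with $(3) \Rightarrow (2) \Rightarrow (1) \Rightarrow$ smooth (above) and the trivial smooth $\Rightarrow (3)$ — the relative depth vector records precisely the "extra" incidences $\dim(U \cap V^i) - (a_1 + \cdots + a_i)$, and a smooth Schubert point of $\xbc$ inside $\xa$ has no such extra incidences — closes all the equivalences.

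\textbf{Main obstacle.} The genuinely delicate point is the converse direction: that $d_{\alpha,s}(\beta) = 1$ (equivalently $Eu_{\xa}(U) = 1$) actually \emph{implies} $U$ is a smooth point. That $Eu = 1$ at smooth points is classical, but $Eu = 1 \Rightarrow$ smooth is false for general varieties and true here only because of the irreducibility of $CC(\IC{\xa})$, which I must either cite as a known property of Grassmannian Schubert varieties or derive. Once irreducibility is in hand the converse is immediate from Kashiwara's index theorem, so the proof hinges on properly invoking that input; I would state it as a lemma with a reference (e.g.\ to the literature on $\IC$ of Schubert varieties, or bootstrap it from Theorem \ref{thm:zel-thm}(3) and Proposition \ref{prop:decomp}) rather than reprove it.
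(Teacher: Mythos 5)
Your microlocal reduction $d_{\alpha,s}(\beta)=Eu_{\xa}(U)=\Chi_U(\IC{\xa})$ is legitimate (irreducibility of $CC(\IC{\xa})$ is indeed available, by Bressler--Finkelberg--Lunts), but the pivotal step --- passing from $\Chi_U(\IC{\xa})=1$, equivalently $Eu_{\xa}(U)=1$, to the conclusion that $U$ is a \emph{smooth} point --- is not justified, and the justification you offer is not a theorem. Triviality of the local intersection cohomology is the definition of \emph{rational} smoothness; there is no ``Goresky--MacPherson characterization of smoothness via local intersection cohomology,'' and in general a point where the intersection cohomology sheaf is the shifted constant sheaf need not be smooth (e.g.\ the surface singularity $\C^2/\{\pm 1\}$ is a rational homology manifold with constant $\IC{}$; rationally smooth but singular Schubert varieties exist in non-simply-laced groups). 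Irreducibility of the characteristic cycle does not repair this: it only identifies $d_{\alpha,s}(\beta)$ with $Eu_{\xa}(U)$ and $\Chi_U(\IC{\xa})$, which again yields only rational smoothness. The paper closes exactly this gap with an input you never invoke: the equivalence of smoothness and rational smoothness for Schubert varieties in simply-laced groups (the ADE theorem of Carrell--Kuttler, together with Deodhar), applied to $\GL_n(\C)$. Your implication (1) $\Rightarrow$ smooth rests on the same false claim as written; it could be salvaged differently, by Zariski's main theorem plus normality of Schubert varieties (a proper birational map onto a normal variety with finite fiber over $U$ is an isomorphism near $U$), but that is not the argument you gave.

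There is a second gap: to close the cycle you need smooth $\Rightarrow$ (3) (equivalently (2) $\Rightarrow$ (3)), and you dismiss it as immediate because ``a smooth point has no extra incidences.'' That is a restatement of the claim, not a proof, and it is precisely the direction the paper works for: by Theorem \ref{thm:zel-formula}, $d_{\alpha,s}(\beta)=1$ forces the sum to consist of the $d=0$ term alone, forces one of $c_{j-1},c_j$ to vanish, and then the binomial factors being $1$ together with $a_j,b_{j-1}>0$ force both to vanish, after which one inducts on the number of peaks. You prove only the easy direction (3) $\Rightarrow$ (2). Finally, your (2) $\Rightarrow$ (1) needs vanishing of odd-dimensional cohomology of the fibers (the paper cites Zelevinsky for this); your alternative via Proposition \ref{prop:decomp}, asserting ``all contributions non-negative,'' assumes that vanishing rather than establishing it.
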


\begin{proof}
  First, $U$ is a smooth point if and only if $U$ is a rationally
  smooth point by the equivalence of smoothness and rational
  smoothness for Schubert varieties in simply laced groups (in
  particular for $\GL_n(\C)$), see \cite{Carrell-Kuttler:SmoothPts}
  (ADE-Theorem) and \cite{Deodhar:LocalPoincare}. Rational smoothness
  in turn is equivalent to the statement that $\Chi_U(\IC{\xa})$, the
  stalk Euler characteristic of the intersection cohomology sheaf of
  $\xa$ at $U$, equals 1. Indeed, the odd dimensional intersection
  cohomology vanishes, so $\Chi_U(\IC{\xa}) = 1$ is equivalent to
  having 1 dimensional cohomology only in the smallest possible degree
  (see the conventions in \S~\ref{sec:micro-local}) which means that
  the point is rationally smooth by definition.

  Since the resolution is small, $\Chi_U(\IC{\xa}) = \Chi(
  \pap^{-1}(U) )$, see Proposition \ref{prop:decomp}. Fibers over points in a
  Zelevinsky resolution have vanishing odd dimensional cohomology (see
  \cite{Zel}, Theorem 2), hence $\Chi( \pap^{-1}(U) ) = 1$ if and only
  if $\pap^{-1}(U) = \pt$. We have thus established that the statement is
  equivalent to both (1) and (2). 

  To see that (2) and (3) are equivalent, we argue using the formula
  of Theorem \ref{thm:zel-formula}. First, suppose that
  $d_{\alpha,s}(\beta) = 1$. Then there can be only a single
  term in the formula. It follows that the only term appearing is the
  $d=0$ term (if a $d>0$ term is non-zero, then terms for smaller $d$
  will also be non-zero). It also follows that either $c_j =0 $ or
  $c_{j-1} = 0$ since if they were both non-zero there would be at
  least a $d=0$ and $d=1$ term. 

  We are left with the statement that $1 =
  \binom{a_j+c_j-c_{j-1}}{c_j} \cdot
  \binom{b_{j-1}-c_j+c_{j-1}}{c_{j-1}} d_{\alpha', s'}(\gamma(0))$.
  So each factor on the right hand side equals $1$. By induction, we may assume that
  $d_{\alpha',s'}(\gamma(0)) = 1$ implies $$(c_0, c_1, \ldots, c_{j-2},
  d, c_{j+1}, \ldots, c_m) = (0, \ldots, 0) .$$ It remains to show that
  $c_{j-1} = c_j = 0$. Since $\binom{a_j+c_j-c_{j-1}}{c_j} = 1$ we
  have that either $a_j-c_{j-1}=0$ or $c_j = 0$. Similarly we have
  that either $b_{j-1}-c_j=0$ or $c_{j-1}=0$. But now since we know
  that one of the two integers $c_j, c_{j-1}$ is zero, it follows that
  the other is zero because $a_j, b_{j-1} > 0$. Thus we have
  established that (2) implies (3). The converse is a simple exercise
  and left to the reader.
\end{proof}

It is convenient to encode the incidence relations which define $\zap$
using a directed graph. Each relation is of the form $\lW_i \subset
U_i \subset \rW_i$ where $\lW_i, \rW_i$ denote either arbitrary points
in one of the factors $\GRb{k_j}{V}$ or fixed subspaces from the
partial flag $V^\bullet$. Consider the directed graph whose vertices
are the $U_i$ along with the $V^i$ and which has a directed edge
corresponding to each inclusion relation which holds among points in
the factors of $\zap$.

For example the directed graph 
		\[ 
		\xygraph{
		!{<0cm,0cm>;<1cm,0cm>:<0cm,-1cm>::} 
		!{(0,0) }*+{\bullet_{V^0}}="c0" 
		!{(2,0) }*+{\bullet_{V^1}}="c1" 
		!{(4,0) }*+{\bullet_{V^2}}="c2" 
		!{(3,1)}*+{\bullet_{U_1}}="n1"
		!{(2,2)}*+{\bullet_{U_2}}="n2"
		"c0":"c1" "c1":"c2"
		"c1":"n1" "n1":"c2"
		"c0":"n2" "n2":"n1"
		} .\]
represents the incidence relations occurring in the definition of
$\zap$ where $\alpha$ has two peaks and $s = w_0$ is the order
reversing permutation in $S_2$. See Example \ref{ex:zel-reso-21} for
comparison. If we also attach a non-negative integer to each vertex
giving the dimension of the abstract subspace it represents then this
weighted, directed graph completely determines $\zap$.

\subsection{CSM Class of a Resolution}
\label{subsec:csm-reso}

We give a formula for the total Chern class of the tangent bundle of a
Zelevinsky resolution in terms of the pullbacks of universal
bundles. Since $\zap$ is smooth, this gives a formula for the CSM
class $\csm(\zap)$. We retain the assumptions and notation from
\S~\ref{subsec:zel-reso}. For the remainder of the section we fix
$\alpha$ and $s$ and let $Z = \zap$ and $X = \Grass$.

Let $\pr_i: X \to \GRb{k_i}{\V}$ be the  projection map.
Let $\bV^i$ be the trivial rank $i$ bundle on $X$ whose fiber over
any point is identified with $\V^i$ from $\V^\bullet$. Let
$\bU_i$ be the universal sub-bundle of $\bV$ on $\GRb{k_i}{\V}$
as well as its pull-back under $\pr_i$ to $X$ and $Z$ by abuse
of notation.

For each $i \in \{1, \ldots, m\}$, we define two vector bundles
$\blW_i, \brW_i$ on $X$ by letting the fiber of $\blW_i, \brW_i$
over a point $U_\bullet$ be $\lW_i$, $\rW_i$
respectively. Thus each of $\blW_i$ and $\brW_i$ is either a universal
sub-bundle $\bU_j$, or a trivial bundle $\bV^j$.  Then for each
incidence relation $\lW_i \subset U_i \subset \rW_i$ defining the
variety $Z$, there corresponds an incidence relation among vector
bundles on $Z$: $\blW_i \subset \bU_i \subset \brW_i$.

Let $X^{(j)} = \prod_{i=1}^{j} \GRb{k_i}{V}$ and let $\rho_j : X \to
X^{(j)}$ denote the projection away from the last $m-j$ factors. Thus
$\rho_j(U_1, \ldots, U_m) = (U_1, \ldots, U_{j})$. Similarly, let
$Z^{(j)} = \rho_j(Z)$.  There are
natural projection maps $Z^{(j)} \to Z^{(l)}$ whenever $j >
l$. Finally, let $l_i := k_i - \dim \lW_i$.

\begin{prop} \label{prop:Z-fiber-bundle}
For $j \in \{2, \ldots, m\}$, the natural map $Z^{(j)} \to
Z^{(j-1)}$ is a Grassmannian bundle whose fiber may be identified
with $\GRb{l_{j}}{ \rW_j / \lW_j }$. Moreover, $Z^{(1)}$ 
is isomorphic to the Grassmannian $\GRb{l_1}{ \rW_1 / \lW_1 }$.
\end{prop}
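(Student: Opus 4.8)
The plan is to induct on $j$, realizing $Z^{(j)}$ as the relative Grassmannian of $l_j$-planes in the quotient bundle $\brW_j/\blW_j$ over $Z^{(j-1)}$. Throughout I would use the structural features of Zelevinsky's construction recalled in \S\ref{subsec:zel-reso}: for each $i$ the subspaces $\lW_i$ and $\rW_i$ have fixed dimensions, they depend only on the coordinates $U_1, \dots, U_{i-1}$ together with fixed subspaces of $V^\bullet$, and (since $\zap$ is a nonempty variety, Theorem \ref{thm:zel-thm}) one has $\dim \lW_i \le k_i \le \dim \rW_i$.

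The first step is to identify $Z^{(j)}$ with the locus in $X^{(j)}$ cut out by the first $j$ incidence relations:
\[ Z^{(j)} = \{ (U_1, \dots, U_j) \in X^{(j)} \mid \lW_i \subset U_i \subset \rW_i,\ 1 \le i \le j \}. \]
The inclusion $\subseteq$ is immediate, since the image $\rho_j(Z)$ satisfies these relations (each relation of index $i \le j$ involves only the coordinates $U_1, \dots, U_i$ and fixed subspaces). For the reverse inclusion one must check that any tuple satisfying the first $j$ relations lifts along $\rho_j$ to a point of $Z$; this follows by adjoining $U_{j+1}, \dots, U_m$ one coordinate at a time, at each stage choosing a $k_i$-plane between $\lW_i$ and $\rW_i$, which is possible precisely because $\dim \lW_i \le k_i \le \dim \rW_i$.

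Granting this, the second step is purely formal. Over $Z^{(j-1)}$ the bundles $\blW_j$ and $\brW_j$ of \S\ref{subsec:csm-reso} (each either a pullback of some $\bU_l$ with $l<j$ or a trivial bundle $\bV^l$) have constant rank, and $\blW_j \subset \brW_j$ is a subbundle because the fiberwise inclusion $\lW_j \subseteq \rW_j$ holds everywhere on $Z$; hence $\brW_j/\blW_j$ is a vector bundle of rank $\dim \rW_j - \dim \lW_j$. By the description of $Z^{(j)}$ above, the fiber of $Z^{(j)} \to Z^{(j-1)}$ over a point is the set of $k_j$-planes $U_j$ with $\lW_j \subseteq U_j \subseteq \rW_j$, which the assignment $U_j \mapsto U_j/\lW_j$ identifies with $\GRb{l_j}{\rW_j/\lW_j}$, where $l_j = k_j - \dim \lW_j$; these identifications vary algebraically, so $Z^{(j)} \to Z^{(j-1)}$ is the Grassmann bundle $\GRb{l_j}{\brW_j/\blW_j}$ in the usual sense (cf. \cite{Ful}, Appendix B). The case $j=1$ is the same computation, now with $\lW_1 = V^{j_1-1}$ and $\rW_1 = V^{j_1}$ both fixed, giving $Z^{(1)} = \{U_1 \mid V^{j_1-1} \subseteq U_1 \subseteq V^{j_1}\} \cong \GRb{l_1}{\rW_1/\lW_1}$.

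The main obstacle is the reverse inclusion in the first step: showing that $\rho_j$ surjects onto the locus of the first $j$ relations is where one genuinely uses the combinatorics of Zelevinsky's construction, namely that the remaining incidence conditions $\lW_i \subset U_i \subset \rW_i$ for $i>j$ are always solvable. Once that surjectivity (and the fact that dimensions are constant along $Z$) is in place, the bundle structure is a routine relative-Grassmannian argument and the induction closes.
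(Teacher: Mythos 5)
Your proposal is correct and follows essentially the same route as the paper's proof: identify $Z^{(j)}$ with the locus in $X^{(j)}$ cut out by the first $j$ incidence relations, note the fiber over a point of $Z^{(j-1)}$ is the set of $k_j$-planes between $\lW_j$ and $\rW_j$ (non-empty since Zelevinsky's construction gives $\lW_j \subsetneq \rW_j$), and conclude the Grassmannian-bundle structure on $\GRb{l_j}{\brW_j/\blW_j}$, with the $j=1$ case handled by $\lW_1,\rW_1$ being fixed members of $V^\bullet$. You simply spell out in more detail the surjectivity of $\rho_j$ onto that locus (the lifting of $U_{j+1},\dots,U_m$) and the subbundle verification, which the paper takes as implicit in the construction.
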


\begin{proof} The variety $Z^{(j)}$ is identified with
$$Z^{(j)} = \left\{ (U_1, \ldots, U_{j}) \mid \lW_i \subset U_i
  \subset \rW_i \right\} $$
and similarly for $Z^{(j-1)}$. If we fix a point $(U_1, \ldots,
U_{j-1}) \in Z^{(j-1)}$ then the fiber may be identified with the
set
$$\left\{ U_{j} \in \GRb{k_{j}}{V} \mid \lW_{j} \subset
  U_{j} \subset \rW_{j} \right\} .$$ Zelevinsky's construction
guarantees that $\lW_{j} \subsetneq \rW_{j}$, hence the fibers are all
non-empty and naturally isomorphic to a Grassmannian. It follows that
$Z^{(j)}$ is naturally the Grassmannian bundle over $Z^{(j-1)}$
of $l_{j}$ dimensional subspaces of the fibers of the
vector bundle $\brW_{j} / \blW_{j}$.

The second part of the proposition follows since $\lW_1$ and
$\rW_1$ are both elements of the fixed partial flag $V^\bullet$.
\end{proof}

Let $Y$ be a smooth variety equipped with a vector bundle $\bE \to
Y$. Then $\GRb{k}{\bE}$ denotes the Grassmannian bundle of
$k$-dimensional subspaces of the fibers of $\bE$ over $Y$ and $\pi :
\GRb{k}{\bE} \to Y$ is the map to $Y$. $\GRb{k}{\bE}$
has three natural vector bundles over it. 
There is the pullback $\pi^{-1} \bE$, the
universal sub-bundle $\bS$ whose fiber over a point
$(y, V)$ is the subspace $V$ of $\bE(y)$, and there is the universal 
quotient bundle $\bQ$ whose fiber over $(y, V)$ is $\bE(y) / V$. Thus
there is an exact sequence of vector bundles on $\GRb{k}{\bE}$:
\[ 0 \to \bS \to \pi^{-1} \bE \to \bQ \to 0 .\]

The map $\pi : \GRb{k}{\bE}
\to Y$ is a fiber bundle with smooth fibers. Consequently, $\GRb{k}{\bE}$
is non-singular and its tangent bundle sits in an exact sequence
\[ 0 \to \pi^{-1} TY \to T\GRb{k}{\bE} \to T_{\GRb{k}{\bE}/Y} \to 0 .\]

\begin{prop} \label{prop:vertical-bundle} (\cite{Ful}, B.5.8) The
  relative tangent bundle $T_{\GRb{k}{\bE}/Y}$ is canonically
  isomorphic to
	\[ \Hom(\bS, \bQ) \cong  \bS^\vee \otimes \bQ .\]
\end{prop}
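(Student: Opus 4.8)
The plan is to compute the tangent bundle fiberwise over a point $y \in Y$ and then check that the identification is canonical, i.e.\ bundle-theoretic rather than merely pointwise. Fix a point $(y,V) \in \GRb{k}{\bE}$, so $V \subset E := \bE(y)$ is a $k$-dimensional subspace. The fiber $\pi^{-1}(y)$ is the ordinary Grassmannian $\GRb{k}{E}$, and its tangent space at $V$ is the one needed for the relative tangent bundle. First I would recall the standard local chart on a Grassmannian: choosing a complement $W$ with $E = V \oplus W$, the affine open set of subspaces $V'$ projecting isomorphically onto $V$ along $W$ is parametrized by $\Hom(V,W)$, via $\varphi \mapsto \{v + \varphi(v) \mid v \in V\}$, with $V$ itself corresponding to $\varphi = 0$. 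Differentiating at $\varphi = 0$ identifies $T_V \GRb{k}{E} \cong \Hom(V,W)$. Since $W \cong E/V$ canonically once we quotient (the choice of complement disappears after composing with the projection $E \to E/V$), this gives $T_V \GRb{k}{E} \cong \Hom(V, E/V)$, which is exactly $\Hom(\bS,\bQ)(y,V) = (\bS^\vee \otimes \bQ)(y,V)$.

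The key steps, in order: (1) set up the affine chart and the parametrization by $\Hom(V,W)$ as above; (2) differentiate to get the pointwise isomorphism $T_V\GRb{k}{E} \cong \Hom(V,W)$; (3) observe that composing the "velocity" $\varphi$ with the quotient map $E \twoheadrightarrow E/V$ yields an element of $\Hom(V,E/V)$ that is independent of the chosen complement $W$, giving a canonical pointwise isomorphism $T_V\GRb{k}{E} \cong \Hom(V,E/V)$; (4) verify this identification varies algebraically in $(y,V)$ and hence globalizes to a bundle isomorphism $T_{\GRb{k}{\bE}/Y} \cong \Hom(\bS,\bQ)$; (5) invoke $\Hom(\bS,\bQ) \cong \bS^\vee \otimes \bQ$, which holds because $\bS$ is locally free. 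An alternative for steps (1)--(3) is to differentiate the two maps in the tautological exact sequence $0 \to \bS \to \pi^{-1}\bE \to \bQ \to 0$ restricted to the fiber: since $\pi^{-1}\bE$ is trivial along the fiber, a tangent vector moves $\bS$ inside a fixed space, and the derivative lands naturally in $\Hom(\bS, \pi^{-1}\bE/\bS) = \Hom(\bS,\bQ)$.

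The main obstacle is step (4): making precise that the pointwise isomorphism is actually an isomorphism of coherent sheaves (a morphism of vector bundles that is fiberwise an isomorphism). In practice this is handled by noting that the construction in steps (1)--(3) can be carried out in families over any affine open $U \subset Y$ where $\bE$ trivializes and where one can choose a local sub-bundle complement to $\bS$; the parametrization then exhibits a neighborhood of the zero section of $\Hom(\bS,\bQ)|_U$ as an open subscheme of $\GRb{k}{\bE}|_U$ in a way compatible with the projection to $U$, and differentiating along the fibers gives the bundle map directly. Gluing over a cover of $Y$ and checking independence of choices (already verified pointwise in step (3)) yields the global canonical isomorphism. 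Since the statement is quoted as \cite{Ful}, B.5.8, it is legitimate to present this as a sketch and refer to Fulton for the verification that the identification is natural in the scheme-theoretic sense.
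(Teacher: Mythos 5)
Your sketch is correct: it is the standard chart/deformation argument identifying the relative tangent space at $(y,V)$ with $\Hom(V,\bE(y)/V)$ and then globalizing. The paper gives no proof of its own here -- it simply quotes the statement from \cite{Ful}, B.5.8 -- and your argument is essentially the one found in that reference, so there is nothing to compare beyond noting the match.
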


We now describe the tangent bundle of $Z$ in terms of universal
bundles on $X$.

\begin{thm}
\label{thm:c-tangent-bundle}
  \[ c(TZ) = \prodl_{i=1}^m c( (\bU_i/\blW_i)^\vee \otimes
  (\brW_i/\bU_i) ) \]
\end{thm}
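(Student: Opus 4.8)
The plan is to compute $TZ$ inductively along the tower of Grassmannian bundles $Z^{(1)} \leftarrow Z^{(2)} \leftarrow \cdots \leftarrow Z^{(m)} = Z$ supplied by Proposition \ref{prop:Z-fiber-bundle}, using the Whitney formula to reduce the total Chern class to a product of the relative tangent bundle contributions. First I would set up the base case: by Proposition \ref{prop:Z-fiber-bundle}, $Z^{(1)} \cong \GRb{l_1}{\rW_1/\lW_1}$ is an honest Grassmannian (a point, since $\lW_1, \rW_1$ are fixed flag members), so its tangent bundle is the relative tangent bundle over a point. Then for each step $j = 2, \ldots, m$, the map $Z^{(j)} \to Z^{(j-1)}$ is a Grassmannian bundle with fiber $\GRb{l_j}{\rW_j/\lW_j}$, so there is an exact sequence $0 \to \rho^{-1} TZ^{(j-1)} \to TZ^{(j)} \to T_{Z^{(j)}/Z^{(j-1)}} \to 0$, and Proposition \ref{prop:vertical-bundle} identifies the relative tangent bundle as $\bS_j^\vee \otimes \bQ_j$ where $\bS_j$ is the universal subbundle and $\bQ_j$ the universal quotient of the bundle $\brW_j/\blW_j$ on $Z^{(j)}$.

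The key identifications are: $\bS_j$, the universal subbundle of $\brW_j/\blW_j$, is exactly $\bU_j/\blW_j$ (the subspace $U_j/\lW_j$ inside $\rW_j/\lW_j$), and $\bQ_j = (\brW_j/\blW_j)/(\bU_j/\blW_j) \cong \brW_j/\bU_j$. Hence $T_{Z^{(j)}/Z^{(j-1)}} \cong (\bU_j/\blW_j)^\vee \otimes (\brW_j/\bU_j)$, which is precisely the $i=j$ factor in the claimed product. Applying the Whitney sum formula up the tower gives
\[ c(TZ) = \prodl_{j=1}^m c\big( T_{Z^{(j)}/Z^{(j-1)}} \big) = \prodl_{j=1}^m c\big( (\bU_j/\blW_j)^\vee \otimes (\brW_j/\bU_j) \big), \]
with the convention that $Z^{(0)}$ is a point so the $j=1$ term is $c(TZ^{(1)})$, matching the base case. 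All these bundles are pulled back to $Z$ via the projections $Z \to Z^{(j)}$, and since Chern classes commute with pullback the identity holds on $Z$ as stated. Capping with $[Z]$ then yields $\csm(Z)$, since $Z$ is smooth.

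The main obstacle is bookkeeping rather than conceptual: one must carefully check that the universal subbundle of the relative Grassmannian bundle $\GRb{l_j}{\brW_j/\blW_j}$ over $Z^{(j-1)}$ genuinely pulls back/agrees with $\bU_j/\blW_j$ as defined on the full product $X$ restricted to $Z$ — i.e., that the "abstract" subspace attached to the vertex $U_j$ in the incidence graph is compatibly realized at every level of the tower — and likewise that $\brW_j$ and $\blW_j$, which may be \emph{either} trivial flag bundles $\bV^l$ \emph{or} earlier universal bundles $\bU_l$ with $l<j$, are already defined on $Z^{(j-1)}$ so that the Grassmannian bundle construction of Proposition \ref{prop:Z-fiber-bundle} applies with these as the ambient bundle. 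Both points follow directly from the inductive definition of $\lW_i, \rW_i$ in \S\ref{subsec:zel-reso} (each is indexed by something strictly earlier in the ordering), but they are the places where the argument could go wrong if the indexing were mishandled. A secondary routine point is that tensoring by a line-bundle-free expression and dualizing behave correctly under the Whitney formula, which is immediate.
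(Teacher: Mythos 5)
Your proposal is correct and follows essentially the same route as the paper's proof: induction up the tower $Z^{(1)} \leftarrow \cdots \leftarrow Z^{(m)}=Z$ of Grassmannian bundles from Proposition \ref{prop:Z-fiber-bundle}, identifying each relative tangent bundle via Proposition \ref{prop:vertical-bundle} as $(\bU_j/\blW_j)^\vee \otimes (\brW_j/\bU_j)$ and applying the Whitney formula. The compatibility points you flag (that $\blW_j, \brW_j$ are already defined at level $j-1$ and that the universal sub-bundle of $\GRb{l_j}{\brW_j/\blW_j}$ restricts to $\bU_j/\blW_j$) are exactly the identifications the paper uses implicitly in its inductive step.
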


\begin{proof}
We proceed by showing that
 \[ c(TZ^{(j)}) = \prodl_{i=1}^{j} c( (\bU_i/\blW_i)^\vee \otimes
  (\brW_i/\bU_i) ) \]
by induction on $j$.

If $j = 1$, then $Z^{(j)}$ is a sub-Grassmannian of $\GRb{k_1}{V}$ defined
by a single incidence relation $\lW_1 \subset U_1 \subset \rW_1$.
The universal sub-bundle on $Z^{(1)}$ is $\bU_1/\blW_1$ and the quotient bundle
is $\brW_1/\bU_1$. Proposition \ref{prop:vertical-bundle}  implies that
we have a canonical isomorphism of vector bundles:
\[ TZ^{(1)} \cong (\bU_1/\blW_1)^\vee \otimes (\brW_1/\bU_1) .\] Taking total
Chern classes of both sides the proposition follows.

Suppose that $1 < j \le m$.  The previous proposition says that $Z^{(j)}$ is a
Grassmannian bundle over $Z^{(j-1)}$, specifically
\[ Z^{(j)} \cong \GRb{l_j}{\brW_j/\blW_j} \]
and the bundle map $\GRb{l_j}{\brW_j/\blW_j} \to Z^{(j-1)}$ is the
restriction of the projection map $\phi: X^{(j)} \to X^{(j-1)}$.
The universal sub-bundle on $\GRb{l_j}{\brW_j/\blW_j}$ is 
$\bU_j/\blW_j$ and the quotient bundle is $\brW_j/\bU_j$.
We have an exact sequence of vector bundles on $Z^{(j)}$,
\[ 0 \to \phi^{-1} TZ^{(j-1)} \to TZ^{(j)} \to T_{Z^{(j)}/Z^{(j-1)}} \to 0 ,\]
and by Proposition \ref{prop:vertical-bundle} there is a canonical
isomorphism of vector bundles
\[ T_{Z^{(j)}/Z^{(j-1)}} \cong (\bU_j/\blW_j)^\vee \otimes (\brW_j/\bU_j) .\]
The standard properties of Chern classes (\cite{Ful}, Theorem 3.2)
and the inductive hypothesis imply that
\[ c(TZ^{(j)}) = \prodl_{i=1}^{j} c( (\bU_i/\blW_i)^\vee \otimes
  (\brW_i/\bU_i) ) .\]
\end{proof}

For explicit calculations it is convenient to reformulate Theorem
\ref{thm:c-tangent-bundle} in terms of Schur determinants in Segre classes
of the bundles $\bU_i$, $\blW_i$, and $\brW_i$. 

\begin{defn} \label{defn:D-lambda-mu} Given two partitions $\lambda,
  \mu$ with exactly $N$ parts (the parts are allowed to be zero here), define
\[ D^N_{\lambda, \mu} = \det \left( \binom{\lambda_i + N - i}{\mu_j + N - j} 
\right)_{1 \le i,j \le N} \] 
where $\binom{\lambda_i + N - i}{\mu_j + N - j} = 0$ by convention if
$\lambda_i + N - i < \mu_j + N - j$.
\end{defn}

\begin{rmk} \label{rmk:D-lambda-mu-pos}
	The integer $D^N_{\lambda, \mu}$ defined above is always non-negative 
	(cf. \cite{MacD}, p. 30 - 31). It may be interpreted as the number in a certain
	family of non-intersecting lattice paths via the Lindstrom-Gessel-Viennot Theorem 
	(cf. \cite{Krat}, Theorem 6). Also, $D^N_{\lambda,\mu}$ depends on $N$ in the
        sense that for $\lambda, \mu$ with exactly $N$ parts, $D^N_{\lambda,
          \mu}$ is generally not equal to $D^{N+1}_{(\lambda,0), (\mu, 0)}$.
\end{rmk}

For positive integers $a,b$
let $a \times b$ denote the partition $(b^a) = (b, \ldots, b)$.
For a partition $\lambda \in \P{k}{N}$, let
$\widetilde{\mathbb{D}}(\lambda)$ be the conjugate partition to
$\Dual_{k \times N}(\lambda)$, where the latter notation indicates the
dual of $\lambda$ in  $k \times N$: $$\Dual_{k \times N}(\lambda) = (N-\lambda_k,
N-\lambda_{k-1}, \ldots, N-\lambda_1) .$$ In the formula below,
$s_\lambda(\bB)$ is the Schur determinant corresponding to $\lambda$
evaluated at the Segre classes of $\bB$. See \cite{Ful}, \S~14.5 and
note that we are following the sign convention of \cite{FP} regarding
Segre classes.

\begin{lem} \label{lem:schur-ctz} (\cite{Ful}, Example 14.5.2)
	Let $\bE, \bF$ be vector bundles over a variety $X$ 
	of rank $e, f$ respectively.  Then
	\[ c(\bE^\vee \otimes \bF) = 
	\suml_{\mu \subset \lambda \subset e \times f} D^e_{\lambda, \mu} \,
	s_\mu(\bE^\vee) \, s_{\widetilde{\mathbb{D}}(\lambda)}(\bF)
	 .\]
\end{lem}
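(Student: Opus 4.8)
The statement is classical (it is cited as Example 14.5.2 in Fulton), so the plan is to give a clean derivation via the splitting principle reducing everything to an identity of symmetric functions. The goal is the Chern class $c(\bE^\vee\otimes\bF)$ expanded in the $\csm$-friendly basis $s_\mu(\bE^\vee)\,s_{\TDual(\lambda)}(\bF)$. First I would invoke the splitting principle: assume $\bE$ has Chern roots $x_1,\dots,x_e$ and $\bF$ has Chern roots $y_1,\dots,y_f$, so that $c(\bE^\vee\otimes\bF)=\prod_{i=1}^{e}\prod_{j=1}^{f}(1-x_i+y_j)$. The entire identity then becomes a universal polynomial identity in the $x_i,y_j$, and it suffices to prove it after this substitution.

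\textbf{Key steps.} The core is the following symmetric-function identity: writing $\bE^\vee$ for the bundle with roots $-x_i$ and using that $s_\mu(\bE^\vee)$ is the Schur polynomial in the $-x_i$ (equivalently the corresponding Segre-class determinant, per the sign convention of \cite{FP}), one has
\[
\prod_{i=1}^{e}\prod_{j=1}^{f}(1-x_i+y_j) \;=\; \suml_{\mu\subset\lambda\subset e\times f} D^e_{\lambda,\mu}\, s_\mu(-x_\bullet)\, s_{\TDual(\lambda)}(y_\bullet).
\]
To see this I would expand $\prod_{i,j}(1-x_i+y_j)$ as $\prod_i \prod_j(1+(y_j-x_i))$ and recognize it as a specialization of the generating function $\prod_{i,j}(1+ t_{ij})$; the standard move (cf. \cite{MacD}, I.5, Ex.~2--3, or \cite{Ful}, \S14.5) is to use the dual Cauchy / Jacobi--Trudi machinery to expand $\prod_{i}\prod_{j}(1+(y_j-x_i))$ into a sum over partitions $\lambda\subset e\times f$ of $s_{\lambda^{\mathrm c}}(x)\,s_{\TDual(\lambda)}(y)$ up to the combinatorial multiplicity, and then re-expand $s_{\lambda^{\mathrm c}}(x)$ in the $-x$ variables, i.e. in terms of $s_\mu(\bE^\vee)$, with transition coefficients given precisely by the determinant of binomials $D^e_{\lambda,\mu}=\det\binom{\lambda_i+e-i}{\mu_j+e-j}$. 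The binomial determinant arises exactly because passing from Schur polynomials in $x$ to Schur polynomials in $-x$ (equivalently, from $\lambda$ to its complement within $e\times f$ and then to $\mu$) is governed by the lower-triangular change of basis whose entries are single binomial coefficients; taking the $e\times e$ determinant of the Jacobi--Trudi matrices produces $D^e_{\lambda,\mu}$, and the non-negativity is Remark \ref{rmk:D-lambda-mu-pos}. Finally, translating roots back to Chern/Segre classes (with the $(-1)$-twists absorbed into the $\bE^\vee$ and the Segre-class sign convention of \cite{FP}) yields the stated formula, with the summation range $\mu\subset\lambda\subset e\times f$ forced by the supports of the nonzero terms.

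\textbf{Main obstacle.} The genuine content is the combinatorial identity identifying the transition coefficients with the binomial determinants $D^e_{\lambda,\mu}$ and pinning down the exact index set $\mu\subset\lambda\subset e\times f$; everything else is bookkeeping with the splitting principle and sign conventions. I expect the delicate point to be tracking the two dualities at once — the conjugation-plus-complement $\lambda\mapsto\TDual(\lambda)$ applied to the $\bF$-factor, versus the change of alphabet $x\mapsto -x$ applied to the $\bE$-factor — and verifying that their composition is clean enough that the coefficient is a \emph{single} $e\times e$ binomial determinant rather than a more complicated sum. Since this is precisely the content of \cite{Ful}, Example 14.5.2 (and the underlying symmetric-function identity is in \cite{MacD}), the honest proof is to carefully cite those sources and spell out the dictionary between Chern roots, Segre classes, and Schur polynomials; I would present the argument as the splitting-principle reduction above followed by a reference to \cite{Ful, MacD} for the core symmetric-function identity, rather than re-deriving the Jacobi--Trudi determinant manipulation from scratch.
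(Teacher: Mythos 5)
The paper offers no argument for this lemma at all: it is quoted directly from \cite{Ful}, Example 14.5.2 (with the Segre-class sign convention of \cite{FP}), so the ``proof'' in the paper is just that citation. Your proposal --- a splitting-principle reduction to the symmetric-function identity followed by an appeal to \cite{Ful} and \cite{MacD} for the binomial-determinant expansion --- is correct and amounts to essentially the same treatment, with a bit more detail on where the coefficients $D^e_{\lambda,\mu}$ come from.
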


Let $l_i := \rk \bU_i - \rk \blW_i$ and $r_i := \rk \brW_i - \rk \bU_i$.
The lemma and Theorem \ref{thm:c-tangent-bundle} combine to give
the following corollary.

\begin{cor} \label{cor:alt-c-tangent-bundle}
	\[ c(TZ) = \prodl_{i=1}^p \left( 
	\suml_{\mu \subset \lambda \subset l_i \times r_i} D^{l_i}_{\lambda, \mu}  \,
	s_\mu((\bU_i/\blW_i)^\vee) \, 
	s_{\widetilde{\mathbb{D}}(\lambda)}(\brW_i/\bU_i)
	\right)
	 \]
\end{cor}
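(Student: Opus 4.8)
The plan is to combine the multiplicative formula for $c(TZ)$ from Theorem~\ref{thm:c-tangent-bundle} with the Schur-expansion of $c(\bE^\vee \otimes \bF)$ provided by Lemma~\ref{lem:schur-ctz}. Since Theorem~\ref{thm:c-tangent-bundle} already gives
\[ c(TZ) = \prodl_{i=1}^m c\bigl( (\bU_i/\blW_i)^\vee \otimes (\brW_i/\bU_i) \bigr), \]
it suffices to rewrite each factor of this product via Lemma~\ref{lem:schur-ctz} and then take the product of the resulting expressions.

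First I would apply Lemma~\ref{lem:schur-ctz} to the pair of bundles $\bE = \bU_i/\blW_i$ and $\bF = \brW_i/\bU_i$ on $Z$. By Proposition~\ref{prop:Z-fiber-bundle} (and the analysis in the proof of Theorem~\ref{thm:c-tangent-bundle}), the bundle $\bU_i/\blW_i$ has rank $\rk\bU_i - \rk\blW_i = l_i$ and $\brW_i/\bU_i$ has rank $\rk\brW_i - \rk\bU_i = r_i$. Substituting $e = l_i$, $f = r_i$ into the lemma yields
\[ c\bigl( (\bU_i/\blW_i)^\vee \otimes (\brW_i/\bU_i) \bigr) = \suml_{\mu \subset \lambda \subset l_i \times r_i} D^{l_i}_{\lambda, \mu}\, s_\mu\bigl((\bU_i/\blW_i)^\vee\bigr)\, s_{\widetilde{\mathbb{D}}(\lambda)}(\brW_i/\bU_i), \]
where the partitions $\lambda,\mu$ are understood to have exactly $l_i$ parts (padding with zeros if necessary), consistent with the conventions in Definition~\ref{defn:D-lambda-mu} and the surrounding discussion of $\widetilde{\mathbb{D}}$.

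Finally I would take the product over $i = 1, \ldots, m$ (writing $p$ for $m$ as in the statement of the corollary) and invoke Theorem~\ref{thm:c-tangent-bundle} to identify the left-hand side with $c(TZ)$. This gives exactly
\[ c(TZ) = \prodl_{i=1}^p \left( \suml_{\mu \subset \lambda \subset l_i \times r_i} D^{l_i}_{\lambda, \mu}\, s_\mu\bigl((\bU_i/\blW_i)^\vee\bigr)\, s_{\widetilde{\mathbb{D}}(\lambda)}(\brW_i/\bU_i) \right), \]
which is the assertion of the corollary.

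This proof is essentially a bookkeeping exercise; the only point requiring care — and hence the main obstacle — is matching the rank parameters correctly, namely confirming that the relevant ranks for the $i$-th factor are $l_i = \rk\bU_i - \rk\blW_i$ and $r_i = \rk\brW_i - \rk\bU_i$ exactly as defined just before the corollary, and that the index set $\mu \subset \lambda \subset l_i \times r_i$ together with the superscript $l_i$ on $D$ is the one dictated by Lemma~\ref{lem:schur-ctz} with $e = l_i$. Since these ranks are precisely the dimensions of the Grassmannian fibers appearing in Proposition~\ref{prop:Z-fiber-bundle}, no further geometric input is needed beyond the two results already established.
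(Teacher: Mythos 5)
Your proposal is correct and is exactly the argument the paper intends: the corollary is stated as an immediate consequence of combining Theorem \ref{thm:c-tangent-bundle} with Lemma \ref{lem:schur-ctz}, applied factor by factor with $e = l_i$ and $f = r_i$. Your observation that the $p$ in the product should be $m$ is also consistent with the paper's notation.
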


\begin{rmk} \label{rmk:uses-of-resolutions} In Section
  \ref{sec:csm-reso} we focus on using the formulas above to calculate
  CSM classes of Schubert cells and varieties. Following the method
  outlined in Section \ref{subsec:csm-by-resolution}, one may choose
  the family of compatible resolutions arbitrarily. This means that
  for each parition $\alpha$, the choice of permutation $s$ (and hence
  Zelevinsky resolution $\zap$) is arbitrary. However, some choices
  are more suitable for particular applications.  To apply the results of
   Section \ref{sec:micro-local}, resolutions are required to be
  \emph{small} and
  thus the permutation must be chosen accordingly (see Theorem
  \ref{thm:zel-thm}, part 3). For performing calculations by computer,
  resolutions corresponding to the identity or the order reversing
  permutation are preferable for their combinatorial simplicity. The
  resolutions corresponding to order reversing permutations are
  used in Section \ref{subsec:direct-pushforward} to give another
  method of calculating the key pushforward $\pas$ appearing in
  \eqref{eq:csm-formula-grassmannian}.
\end{rmk}


\section{Pushforward to the Grassmannian}
\label{sec:csm-reso}

In this section we compute the pushforward of the CSM class of a
resolution to the Grassmannian. This gives one ingredient of formula
\eqref{eq:csm-formula-grassmannian}.

An algorithm for calculating the pushforward of $\CSM(Z)$ is described
for an arbitrary Zelevinsky resolution in Section
\ref{subsec:integration} by means of the Bott Residue Formula. A more
direct method for the case when $Z = \zap$ with $s$ the order
reversing permutation is described in
Section \ref{subsec:direct-pushforward}. Here, the main tool is a
description of the Gysin map of a Grassmannian bundle due to
Fulton-Pragacz. 

\subsection{Integration} \label{subsec:integration}

The pushforward map $\pi_*$, associated to a proper map of varieties
$$\pi : X \to Y ,$$ may be understood via integration (in the algebraic
sense) over $X$. When $X$ admits an action of an algebraic torus, $T$,
the integration may be carried out explicitly using localization for
the $T$-equivariant Chow group of $X$. 

Let $X$ be a smooth projective $T$-variety. Let $\bE_1, \ldots, \bE_n$
be $T$-equivariant vector bundles on $X$ and let $P(\bE)$ denote a
polynomial in the Chern classes of the $\bE_i$. There are equivariant
Chern classes $c_j^T(\bE_i)$ and a corresponding polynomial $P^T(\bE)$
in the equivariant Chern classes which specializes to $P(\bE)$. We use
the notation $[Y]_T$ to denote the $T$-equivariant fundamental class
of a $T$-variety $Y$. See \cite{EG:IntersectionTheory,
  EG:Localization} for details.

It is well known that the fixed point set $X^T$ is smooth with
finitely many connected components. If $F$ is a connected component,
let $N_F X$ be the normal bundle to $F$ in $X$, and $d_F = \rk N_F X =
\codim_X F$. The following theorem is an algebraic form of the Bott
Residue formula which may be found in \cite{EG:Localization}, Theorem
3.

\begin{thm} (Bott residue formula)
$$\int_X P(\bE) \cap [X] = \sum_{F \subset X^T} \int_F \frac{
  P^T(\bE\lvert_F) \cap [F]_T}{c^T_{d_F}(N_F X)} .$$
\end{thm}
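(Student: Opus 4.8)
The plan is to derive the formula from the localization theorem for $T$-equivariant Chow groups, following \cite{EG:Localization}. First I would isolate the geometric input. Since $X$ is smooth, each fixed component $F \subset X^T$ is smooth, connected, hence irreducible, and $T$ acts on the fibers of the normal bundle $N_F X$ with no zero weights. Therefore the top equivariant Chern class $c^T_{d_F}(N_F X) \in A_T^*(F)$ has for its ``constant term'' the product of those nonzero characters, so it becomes invertible once one inverts the nonzero characters of $T$ in $A_T^*(\mathrm{pt})$; write $\mathcal{Q}$ for this localization and $A_*^T(-)_{\mathcal{Q}}$ for the correspondingly localized equivariant Chow groups. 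The two facts I then need are: (i) the self-intersection formula $(i_F)^*(i_F)_* \beta = c^T_{d_F}(N_F X) \cap \beta$ for the inclusion $i_F : F \hookrightarrow X$; and (ii) Edidin--Graham's localization isomorphism $\bigoplus_F (i_F)_* : \bigoplus_F A_*^T(F)_{\mathcal{Q}} \iso A_*^T(X)_{\mathcal{Q}}$, the sum being finite since $X^T$ has finitely many components.

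Next I would use (i) and (ii) to express the equivariant fundamental class of $X$ as
\[ [X]_T = \suml_{F \subset X^T} (i_F)_* \left( \frac{[F]_T}{c^T_{d_F}(N_F X)} \right) \quad \text{in} \quad A_*^T(X)_{\mathcal{Q}} .\]
By (ii) one may write $[X]_T = \suml_F (i_F)_* \alpha_F$ for unique classes $\alpha_F$; applying $(i_{F'})^*$, using that $(i_{F'})^*(i_F)_* = 0$ when $F \neq F'$ (disjoint images), the self-intersection formula when $F = F'$, and $(i_{F'})^* [X]_T = [F']_T$ (Gysin pullback of the fundamental class of a smooth variety along a regular embedding of smooth varieties), one obtains $c^T_{d_{F'}}(N_{F'}X) \cap \alpha_{F'} = [F']_T$, whence $\alpha_{F'} = [F']_T / c^T_{d_{F'}}(N_{F'}X)$ after inverting that class.

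Then I would cap both sides with the equivariant polynomial class $P^T(\bE)$ and push forward to a point, i.e. apply $\int_X$. The projection formula carries $P^T(\bE)$ inside each term and restricts it, giving
\[ \int_X P^T(\bE) \cap [X]_T = \suml_{F \subset X^T} \int_F \frac{P^T(\bE\lvert_F) \cap [F]_T}{c^T_{d_F}(N_F X)} .\]
Finally I would specialize the equivariant parameters to $0$: the $c^T_j(\bE_i)$ specialize to the ordinary Chern classes $c_j(\bE_i)$ and $[X]_T$ to $[X]$, so the left-hand side becomes $\int_X P(\bE) \cap [X]$. Since that left-hand side is a \emph{polynomial} in the equivariant parameters, the identity simultaneously shows the a priori merely rational right-hand side is regular at $0$, and the formula follows (as an identity of numbers, or of polynomials if one keeps the parameters).

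The argument is essentially expository: the statement is \cite{EG:Localization}, Theorem 3, and the only point one must not skip is the invertibility of $c^T_{d_F}(N_F X)$ after localization — this is where smoothness of $X$ (hence of the fixed locus and of each normal bundle) and the absence of zero weights normal to $X^T$ are genuinely used, and it is exactly what makes the ``residue'' quotient meaningful. Everything else is the bookkeeping needed to pass from the localization isomorphism to the residue formula.
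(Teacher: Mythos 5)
The paper does not prove this statement at all: it is quoted verbatim as an algebraic form of the Bott residue formula with a citation to Edidin--Graham (\emph{op.\ cit.}, Theorem 3), so there is no internal proof to compare against. Your derivation is correct and is essentially the standard Edidin--Graham argument for that cited theorem --- invertibility of $c^T_{d_F}(N_F X)$ after inverting the nonzero characters, the localization isomorphism plus self-intersection formula to write $[X]_T$ as the sum of $(i_F)_*\bigl([F]_T / c^T_{d_F}(N_F X)\bigr)$, then the projection formula and specialization of the equivariant parameters --- so it fills the gap in exactly the way the paper's reference does; the only point worth noting is that when $\deg P = \dim X$ the equivariant integral already lies in degree zero of $R_T$, so the final specialization step is automatic.
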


We now return to the context of the Grassmannian. Let $T \subset B$ be a
maximal torus and Borel subgroup of $G = \GL_n(\C)$ which stabilize
the fixed flag $V_\bullet \subset \C^n$ from \S \ref{subsec:zel-reso} which we used to
define Schubert varieties. Let $Z = \zap$ as in
\S~\ref{subsec:csm-reso}. The ambient space of the resolution $Z$ is
$X = \prod_i \GR{k_i}{n}$. $G$ acts diagonally on $X$ and $Z$ is $B$
invariant for the action (see \S~\ref{subsec:zel-reso}).  Hence $Z$ is
also $T$ invariant.

To calculate coefficients in the Schubert basis of the class $\pi_*
\csm(Z)$ it suffices to use the Bott Residue Formula to calculate 
integrals as follows.

\begin{lem} \label{lem:how-to-integrate}
Let $\gamma_{\alpha, \beta}$ be the coefficient of $[\xb]$ in $\pi_*
\csm(Z)$. Then,
$$ \gamma_{\alpha, \beta} = \int_Z c(TZ) \cdot
c_{\beta}(\bC^n/\bU) \cap [Z]$$
where $\bC^n/\bU$ is the pullback of the universal quotient bundle on
$\GR{k}{n}$. 
\end{lem}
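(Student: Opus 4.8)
The plan is to extract each coefficient $\gamma_{\alpha,\beta}$ by pairing the (mixed-degree) class $\pi_*\csm(Z)\in A_*\GR{k}{n}$ against the basis of $A^*\GR{k}{n}$ dual to the Schubert basis under the intersection pairing, and then to carry the resulting intersection number back to $Z$ by the projection formula.

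First I would record the relevant form of Poincar\'e duality on the Grassmannian: the Schubert classes $\{[\xb]\}$ are a $\Z$-basis of $A_*\GR{k}{n}$, and with respect to the pairing $\int_{\GR{k}{n}}(-)\cdot(-)$ the dual basis is $\{c_\beta(\bC^n/\bU)\}$, where $c_\beta(\bC^n/\bU)$ denotes the Schur-type polynomial $c_\beta$ in the Chern classes of the universal quotient bundle. By the Giambelli formula this class is the Schubert class $\sigma_\beta$ of codimension $|\beta|$, and the classical intersection numbers of the Grassmannian give $\int_{\GR{k}{n}}[X_{\beta'}]\cdot c_\beta(\bC^n/\bU)=\delta_{\beta,\beta'}$ (see \cite{Ful}, Ch.~14, and \cite{FP}). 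I would also observe that this pairing automatically selects the correct graded component of $\pi_*\csm(Z)=\sum_{\beta'}\gamma_{\alpha,\beta'}[X_{\beta'}]$: since $\int_{\GR{k}{n}}$ is nonzero only on the zero-dimensional part while $c_\beta(\bC^n/\bU)$ has codimension $|\beta|$, every summand with $|\beta'|\neq|\beta|$ pairs to $0$, and the rest pair to $\gamma_{\alpha,\beta'}\delta_{\beta,\beta'}$. Hence $\gamma_{\alpha,\beta}=\int_{\GR{k}{n}}(\pi_*\csm(Z))\cdot c_\beta(\bC^n/\bU)$.

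Next I would apply the projection formula for the proper morphism $\pi=\pap:Z\to\GR{k}{n}$, together with the compatibility of $c_\beta$ with pullback (so that $\pi^*c_\beta(\bC^n/\bU)$ is exactly the class $c_\beta(\bC^n/\bU)$ in the statement, the pullback of the universal quotient bundle); this gives $\int_{\GR{k}{n}}(\pi_*\csm(Z))\cdot c_\beta(\bC^n/\bU)=\int_Z\csm(Z)\cdot c_\beta(\bC^n/\bU)$. Finally, since $Z=\zap$ is smooth one has $\csm(Z)=c(TZ)\cap[Z]$ (cf.\ \S\ref{subsec:csm-defn}), and substituting this produces the asserted formula. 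The one genuine obstacle is the bookkeeping behind the duality statement — pinning down the conventions for the universal sub- versus quotient bundle and for the complementation of partitions in the $k\times(n-k)$ rectangle so that $\int_{\GR{k}{n}}[X_{\beta'}]\cdot c_\beta(\bC^n/\bU)=\delta_{\beta,\beta'}$ holds literally; the projection formula, the compatibility of Chern and Schur classes with pullback, and the smoothness of $Z$ are all formal.
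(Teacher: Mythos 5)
Your argument is exactly the paper's: the paper disposes of this lemma by citing "a routine application of the projection formula and the duality formula for Schubert calculus (\cite{Ful}, \S 14.7)," which is precisely your pairing of $\pi_*\csm(Z)$ against the dual basis $c_\beta(\bC^n/\bU)$ followed by the projection formula and smoothness of $Z$. The proof is correct, and your attention to the duality convention $\int_{\GR{k}{n}}[X_{\beta'}]\cdot c_\beta(\bC^n/\bU)=\delta_{\beta,\beta'}$ is the only nontrivial bookkeeping, just as you note.
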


As in \S \ref{subsec:csm-reso}, $c_\beta(-)$ denotes a
Schur determinant in the Chern classes of the argument.  The lemma
follows from a routine application of the projection formula and the
duality formula for Schubert calculus (see \cite{Ful}, \S14.7). The
integrand in the Lemma consists of Chern classes of $T$-equivariant
bundles on $Z$, thus the Bott Residue Formula applies.

\begin{lem} \label{lem:finite-fixed-pts}
The fixed point set $Z^T$ is finite.
\end{lem}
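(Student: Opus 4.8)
The plan is to identify the $T$-fixed points of $Z = \zap$ explicitly and show there are finitely many. First I would recall that $Z$ is a closed $T$-stable subvariety of the product $X = \prod_{i=1}^m \GR{k_i}{n}$, so $Z^T = Z \cap X^T$. The torus $T$ is the diagonal maximal torus acting on $\C^n$, so a point $(U_1, \ldots, U_m)$ of $X$ is $T$-fixed precisely when each $U_i$ is a coordinate subspace of $\C^n$, i.e.\ $U_i = \Span{e_j : j \in S_i}$ for some subset $S_i \subset \{1, \ldots, n\}$ with $|S_i| = k_i$. In particular $X^T$ is finite, hence so is $Z^T$.

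That observation already suffices, but to make the proof self-contained (and to connect with the combinatorics used elsewhere) I would describe $Z^T$ more precisely. A $T$-fixed point $(U_1, \ldots, U_m) \in X^T$ lies in $Z$ exactly when the incidence relations $\lW_i \subset U_i \subset \rW_i$ hold. Since each $\lW_i$ and $\rW_i$ is either a coordinate subspace $V^j$ from the fixed flag or one of the earlier coordinate subspaces $U_j$ ($j < i$), these relations become inclusions among the index sets: $S_i$ must contain the index set of $\lW_i$ and be contained in the index set of $\rW_i$. Thus $Z^T$ is in bijection with the set of tuples $(S_1, \ldots, S_m)$ of coordinate index sets satisfying this finite system of containment constraints, which is manifestly a finite set.

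The main point to be careful about is simply that $Z$ is genuinely a \emph{closed} subvariety of $X$ cut out by the incidence relations — this is exactly the content of Zelevinsky's construction as recalled in \S\ref{subsec:zel-reso} and in Theorem \ref{thm:zel-thm}(1), so there is no real obstacle here. One could alternatively argue via Proposition \ref{prop:Z-fiber-bundle}: $Z = Z^{(m)}$ is built up as an iterated Grassmannian bundle $Z^{(j)} \to Z^{(j-1)}$ with $Z^{(1)}$ an ordinary Grassmannian, and since $T$ acts with finitely many fixed points on each fiber (a Grassmannian of a $T$-representation that is a subquotient of $\C^n$ with distinct characters) and on the base $Z^{(1)}$, an induction on $j$ shows $Z^{(j)}$ has finitely many $T$-fixed points. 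Either route is short; the first is the cleanest.
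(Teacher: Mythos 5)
Your proof is correct and follows essentially the same route as the paper: $Z$ is a $T$-stable subvariety of $\prod_i \GR{k_i}{n}$, and since each Grassmannian factor has only finitely many $T$-fixed points (the coordinate subspaces, i.e.\ the centers of the Schubert cells), $Z^T \subset X^T$ is finite. The extra description of $Z^T$ via index sets and the alternative fiber-bundle induction are fine but not needed.
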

\begin{proof}
This is clear since $Z \subset \prod_i \GR{k_i}{n}$ and each factor 
$\GR{k_i}{n}$ has finitely many $T$ fixed points: each Schubert cell contains a unique point, its `center', which is fixed by $T$. 
\end{proof}

To make the integral in Lemma \ref{lem:how-to-integrate} explicit, one
needs to parametrize the fixed point set and express the equivariant
Chern classes restricted to the fixed points in terms of characters of
$T$ (see \cite{EG:Localization}, Lemma 3).

Let $\{ e_i \}_{i=1 \ldots n}$ be a basis of $\C^n$ so that $T$ is the
group of diagonal matrices in $G$ and the flag fixed by $B$ is $0
\subset \Span{e_1} \subset \Span{e_1,e_2} \subset \cdots \subset
\Span{e_1, \ldots, e_n}$. Then the $T$ fixed points in $\GR{r}{n}$ are
the subspaces $\Span{e_{i_1}, \ldots, e_{i_r}}$ where $(i_1, \ldots,
i_r)$ is any sequence of integers such that $1 \le i_1 < \cdots < i_r
\le n$. Therefore, a point $U_\bullet \in \prod_i \GR{k_i}{n}$ is in
$Z^T$ if and only if
\begin{enumerate}
\item For each $i$, $U_i = \Span{e_{j^i_1}, \ldots, e_{j^i_{k_i}}}$
for some sequence $j^i_1 < \cdots < j^i_{k_i}$
\item The subspaces $U_i$ satisfy all the incidence relations defining
  $Z$. 

\end{enumerate}

Let $U_\bullet \in Z^T$. The restriction to $U_\bullet$ of a
$T$-equivariant bundle (such as the pullback of a universal sub or
quotient bundle to $Z$) is a representation for $T$ which decomposes
according to the characters. The equivariant Chern classes of such a
restriction may be calculated in terms of the characters (see
\cite{EG:Localization} Lemma 3). For our purposes, the following
special case suffices.  Let $\hat{T}$ be the character group of $T$
and let $R_T \cong \Sym(\hat{T}) \cong \mathbb{Z}[t_1, \ldots, t_n]$
denote the $T$-equivariant Chow ring of a point.

\begin{lem} \label{lem:equiv-Chern-via-character}
Let $E_\chi \to \pt$ be a rank $r$, $T$-equivariant vector bundle over a point
such that the action of $T$ on any vector in $E_\chi$ is given by
the character $\chi$. Then,
$$ c_i^T(E_\chi) = \binom{r}{i} \chi^i \in R_T .$$
\end{lem}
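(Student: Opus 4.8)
The plan is to compute the equivariant total Chern class of $E_\chi$ directly from its definition in terms of a splitting principle, using the fact that the $T$-action is by the single character $\chi$. First I would observe that, since $T$ acts on the rank-$r$ vector bundle $E_\chi \to \pt$ through the one-dimensional character $\chi$, the bundle decomposes $T$-equivariantly as a direct sum of $r$ copies of the equivariant line bundle $L_\chi$ on which $T$ acts by $\chi$; that is, $E_\chi \cong L_\chi^{\oplus r}$ as $T$-equivariant bundles. By construction of the equivariant Chow ring $R_T \cong \Sym(\hat T)$, the first equivariant Chern class $c_1^T(L_\chi)$ is exactly the element $\chi \in \hat T \subset R_T$.

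Next I would apply the Whitney sum formula for equivariant Chern classes, which holds because equivariant Chern classes satisfy the same axioms as ordinary Chern classes (see \cite{EG:IntersectionTheory, EG:Localization}): from $E_\chi \cong L_\chi^{\oplus r}$ we get
\[ c^T(E_\chi) = c^T(L_\chi)^r = (1 + \chi)^r = \suml_{i=0}^r \binom{r}{i}\chi^i .\]
Extracting the degree-$i$ part of this total equivariant Chern class yields $c_i^T(E_\chi) = \binom{r}{i}\chi^i$, which is the claim.

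The only point that needs a word of care is the identification $c_1^T(L_\chi) = \chi$ under the isomorphism $R_T \cong \mathbb{Z}[t_1,\ldots,t_n]$: this is really the normalization convention that sets up the whole localization machinery, and one should simply cite \cite{EG:Localization}, Lemma 3, where the equivariant Chern classes of a $T$-representation over a point are expressed in terms of its characters. With that convention fixed, everything else is the splitting principle plus the binomial theorem, so I do not expect any genuine obstacle; the statement is essentially a reformulation of the convention for bookkeeping purposes in the later Bott residue computations. For completeness one could also note that the lemma is consistent with the rank-one case $r = 1$, where it just says $c_1^T(L_\chi) = \chi$ and $c_i^T = 0$ for $i > 1$.
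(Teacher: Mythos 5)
Your proof is correct, and it is essentially the argument the paper intends: the paper gives no proof of this lemma, simply citing \cite{EG:Localization}, Lemma 3, whose content (equivariant Chern classes of a $T$-representation over a point expressed via its characters) is exactly what you spell out by writing $E_\chi \cong L_\chi^{\oplus r}$, using $c_1^T(L_\chi)=\chi$ under $R_T\cong\Sym(\hat T)$, and applying the Whitney formula to get $c^T(E_\chi)=(1+\chi)^r$. No gaps; the only convention-dependent point, the normalization $c_1^T(L_\chi)=\chi$, is the one you correctly flag and source.
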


To carry out explicit calculations it is useful to restrict the torus
action to a one-parameter subgroup $\C^* \subset T$ such that $Z^{\C^*} =
Z^T$. This can be achieved by letting the $\C^*$ act with generic
enough weights. We identify the $\C^*$-equivariant Chow ring of a
point with $\mathbb{Z}[t]$.

\begin{ex} \label{ex:integration}
We illustrate the calculation of $\gamma_{(2,1), (1,1)}$ using the
Bott Residue Formula. The coefficient depends not only on the
partitions $(2,1)$ and $(1,1)$, but also a choice of resolution for
$X_{(2,1)}$. Let $\pi_{(2,1)} : Z \to X_{(2,1)}$ denote the
Zelevinsky resolution corresponding to the identity permutation. This
is a small resolution. It is an isomorphism away from the point
$X_{(0)} \subset X_{(2,1)}$ and has fiber isomorphic to $\CP^1$ over
$X_{(0)}$.

Evaluating $c(TZ)$ according to Corollary
\ref{cor:alt-c-tangent-bundle} and selecting the terms with
appropriate degree we have:
\begin{equation} \label{eq:gamma-integral} 
\gamma_{(2,1),(1,1)} = \int_Z \left(
3 c_1(\bU_1^\vee) + 3 c_1( (\bU_2/\bU_1)^\vee )
\right) \cdot c_{(1,1)}(\bV/\bU_2) \cap [Z] .
\end{equation}

Using the Bott Residue Formula, we calculate the integral
\begin{equation} \label{eq:bott-res-integral}
\int_Z c_1(\bB) c_{(1,1)}(\bV/\bU_2) \cap [Z] .
\end{equation}
Below, $\bB$ will be either $\bU_1^\vee$ or
$(\bU_2/\bU_1)^\vee$.  Suppose that $\C^* \subset T$ acts on $V =
\C^4$ with weights $[w_1,\ldots,w_4] = [0,1,2,3]$. These weights are
generic enough so that $Z^{\C^*} = Z^T$.

To describe the $T$ fixed points in $Z$, let $i_1 \in \{1,2\}$ and
$i_2 \in \{1,2,3,4\} \backslash \{i_1\}$. Then a $T$ fixed point has
the form $U_\bullet = ( \Span{e_{i_1}} , \Span{e_{i_1},e_{i_2}} )$ for some
choice of $i_1, i_2$. In this case there are 6 fixed points. Let
$U_\bullet$ be a fixed point corresponding to a choice of indices
$i_1, i_2$. Let $\{1,2\} \backslash \{i_1\} = \{q\}$ and let
$\{1,2,3,4\} \backslash \{i_1, i_2\} = \{q_1,q_2\}$. Then the
$\C^*$-equivariant Chern classes of the relevant bundles restricted to
$U_\bullet$ are:
$$
\begin{array}{lll} 
c_{(1,1)}^{\C^*}(\bV/\bU_2)=((w_{q_1}+w_{q_2})^2-w_{q_1} w_{q_2}) t^2 ,&
c_1^{\C^*}(\bU_1^\vee) = (-w_{i_1}) t , &
c_1^{\C^*}((\bU_2/\bU_1)^\vee) = (-w_{i_2}) t , \\
& & \\
\multicolumn{3}{l}{c_3^{\C^*}(TZ) =
  (w_q-w_{i_1})(w_{q_1}-w_{i_2})(w_{q_2}-w_{i_2}) t^3 .}
\end{array} 
$$

If we let $c_1^{\C^*}(\bB) = b t$, then each term of the Bott Residue
Formula applied to \eqref{eq:bott-res-integral} is the following rational number:
$$ \frac{b ((w_{q_1}+w_{q_2})^2-w_{q_1} w_{q_2})}{
  (w_q-w_{i_1})(w_{q_1}-w_{i_2})(w_{q_2}-w_{i_2}) } $$

Now we sum over the fixed point set. The sums below are written according to
the following order for the fixed points:
$$
\begin{array}{lll}
 (\Span{e_1},\Span{e_1,e_2}), & (\Span{e_1},\Span{e_1,e_3}), &
  (\Span{e_1},\Span{e_1,e_4}), \\
 (\Span{e_2},\Span{e_1,e_2}), & (\Span{e_2},\Span{e_2,e_3}), &
 (\Span{e_2},\Span{e_2,e_4})  .
\end{array}
$$

\begin{enumerate}
\item $\int_Z c_1(\bU_1^\vee) c_{(1,1)}(\bV/\bU_2) \cap [Z] = 0 + 0 +
  0 + 1 + 0 + 0 = 1$
\item $\int_Z c_1((\bU_2/\bU_1)^\vee) c_{(1,1)}(\bV/\bU_2) \cap [Z] =
  -3 + 6 + -3 + 0 + 0 + 0 = 0$
\end{enumerate}

Combining these integrals with \eqref{eq:gamma-integral} we have:
$$\gamma_{(2,1),(1,1)} = 3 \cdot 1 + 3 \cdot 0 = 3 .$$
\end{ex} 

\subsection{Explicit Calculations}
\label{subsec:explicit}

We present calculations of the CSM classes of all Schubert cells
contained in the Grassmannian $\GR{3}{6}$. The results were obtained
using formula \eqref{eq:csm-formula-grassmannian}, Theorem
\ref{thm:zel-formula}, and the Bott Residue formula. To make explicit calculations of the classes $\pas \csm(\za)$ when $\dim \za \ge 5$
we use the technique described in \S\ref{subsec:integration} and a computer program written in Sage (see \cite{SAGE}) which enumerates the fixed point set $\za^T$ and keeps track of the $T$-characters.

Schubert cells in $\GR{3}{6}$ are indexed by partitions contained in
$(3,3,3)$. For convenience we label these as
follows:
\[
\begin{array}{lll}
(3, 3, 3)=\alpha_0 & (3, 3)=\beta_0 & (3)=\gamma_0 \\ 
(3, 3, 2)=\alpha_1 & (3, 2)=\beta_1 & (2)=\gamma_1 \\
(3, 2, 2)=\alpha_2 & (2, 2)=\beta_2 & (1)=\gamma_2 \\
(3, 3, 1)=\alpha_3 & (3, 1)=\beta_3 & (0)=\gamma_3 \\
(2, 2, 2)=\alpha_4 & (2, 1)=\beta_4 & \\
(3, 2, 1)=\alpha_5 & (1, 1)=\beta_5 & \\
(2, 2, 1)=\alpha_6 & & \\
(3, 1, 1)=\alpha_7 & & \\
(2, 1, 1)=\alpha_8 & & \\
(1, 1, 1)=\alpha_9 & & 
\end{array}
\]

Table \ref{tab:csm-cell-333} lists the coefficients $[c_{\rm
  SM}(X_\alpha^\circ)]_\beta$ where $\alpha$ indicates the row and
$\beta$ indicates the column. 

One may calculate the CSM class of any Schubert \emph{variety} $\xa$ in
$\GR{3}{6}$ by adding up the rows in this table corresponding to the
cells contained in $\xa$.

The subset of this table consisting of the partitions contained in
$(3,3)$ lists the CSM classes of Schubert cells contained in
$\GR{2}{5}$. These agree with the classes listed in Example 1.2 of
\cite{AM}. The author has also checked that all of the coefficients
appearing in Table \ref{tab:csm-cell-333} agree with the formula given
in \cite{AM}, Theorem 3.10.

\begin{sidewaystable}[!htp] 
\centering
\caption{CSM Classes of Schubert Cells Contained in $\GR{3}{6}$}
\label{tab:csm-cell-333}
\begin{tabular}{c|rrrrrrrrrrrrrrrrrrrr}
 & $\alpha_0$   & $\alpha_1$   & $\alpha_2$   & $\alpha_3$   & $\beta_0$
 & $\alpha_4$
 & $\alpha_5$   & $\beta_1$    & $\alpha_6$   & $\alpha_7$   & $\beta_2$   & $\beta_3$
 & $\alpha_8$   & $\gamma_0$   & $\beta_4$    & $\alpha_9$   & $\gamma_1$   
 & $\beta_5$    & $\gamma_2$   & $\gamma_3$   \\
\hline
$\alpha_0$ &    1&    5&   12&   12&   20&   20&   34&   54&   54&   31&   66&   57&   57&   27&   75&   27&   27&   27&    9&    1\\ 
$\alpha_1$   & . &    1&    4&    4&    8&    8&   15&   27&   27&   17&   39&   34&   34&   18&   51&   18&   21&   21&    8&    1\\ 
$\alpha_2 $  & . & . &    1& . & . &    3&    4&    8&   11&    7&   19&   15&   18&    9&   31&   12&   15&   16&    7&    1\\ 
$\alpha_3 $  & . & . & . &    1&    3& . &    4&   11&    8&    7&   19&   18&   15&   12&   31&    9&   16&   15&    7&    1\\ 
 $  \beta_0$   & . & . & . & . &    1& . & . &    4& . & . &    8&    7& . &    8&   15& . &   12&    9&    6&    1\\ 
$\alpha_4  $ & . & . & . & . & . &    1& . & . &    4& . &    8& . &    7& . &   15&    8&    9&   12&    6&    1\\ 
$\alpha_5  $ & . & . & . & . & . & . &    1&    3&    3&    3&    8&    8&    8&    6&   18&    6&   11&   11&    6&    1\\ 
 $  \beta_1 $  & . & . & . & . & . & . & . &    1& . & . &    3&    3& . &    4&    8& . &    8&    6&    5&    1\\ 
$\alpha_6   $& . & . & . & . & . & . & . & . &    1& . &    3& . &    3& . &    8&    4&    6&    8&    5&    1\\ 
$\alpha_7   $& . & . & . & . & . & . & . & . & . &    1& . &    3&    3&    3&    8&    3&    7&    7&    5&    1\\ 
 $  \beta_2  $ & . & . & . & . & . & . & . & . & . & . &    1& . & . & . &    3& . &    4&    4&    4&    1\\ 
  $ \beta_3  $ & . & . & . & . & . & . & . & . & . & . & . &    1& . &    2&    3& . &    5&    3&    4&    1\\ 
$\alpha_8  $ & . & . & . & . & . & . & . & . & . & . & . & . &    1& . &    3&    2&    3&    5&    4&    1\\ 
 $     \gamma_0 $  & . & . & . & . & . & . & . & . & . & . & . & . & . &    1& . & . &    3& . &    3&    1\\ 
  $ \beta_4  $ & . & . & . & . & . & . & . & . & . & . & . & . & . & . &    1& . &    2&    2&    3&    1\\ 
$\alpha_9  $ & . & . & . & . & . & . & . & . & . & . & . & . & . & . & . &    1& . &    3&    3&    1\\ 
 $     \gamma_1 $  & . & . & . & . & . & . & . & . & . & . & . & . & . & . & . & . &    1& . &    2&    1\\ 
  $ \beta_5$  & . & . & . & . & . & . & . & . & . & . & . & . & . & . & . & . & . &    1&    2&    1\\ 
   $   \gamma_2$   & . & . & . & . & . & . & . & . & . & . & . & . & . & . & . & . & . & . &    1&    1\\ 
    $  \gamma_3$   & . & . & . & . & . & . & . & . & . & . & . & . & . & . & . & . & . & . & . &    1
\end{tabular} 
\end{sidewaystable}

\clearpage

\subsection{Explicit Calculation of Chern-Mather Classes}
\label{subsec:cm-computation}

In this section we use the results of \S\ref{subsec:cm-class} to 
calculate the \tCM classes of Schubert
varieties contained in the Grassmannian $\GR{3}{6}$. To our knowledge
these calculations have not appeared in the literature.

The hypotheses of \S\ref{subsec:cm-class} are satisfied for Schubert varieties in the Grassmannian. The first hypothesis is the existence of small resolutions. This is due to Zelevinsky and we reviewed the construction in 
\S\ref{subsec:zel-reso}.
The second hypothesis is the irreducibility of the characteristic cycle. In \cite{BFL}, Bressler, Finkelberg, and Lunts use small 
resolutions to prove that the characteristic cycle of the 
intersection cohomology sheaf of a Schubert variety in the Grassmannian
is irreducible.

\begin{thm}\label{thm:bfl} (\cite{BFL}, Theorem 0.1)
	Let $\xa \subset \GR{k}{n}$ be a Schubert variety 
	and $\IC{\xa}$ be the corresponding intersection cohomology sheaf. 
	Then \[ CC(\IC{\xa}) = \left[ \overline{T^*_{\xac} \GRb{k}{\V}} \right] .\]
\end{thm}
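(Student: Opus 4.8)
\textbf{Proof proposal for Theorem \ref{thm:bfl}.}

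The plan is to prove the irreducibility of $CC(\IC{\xa})$ by showing that the only stratum contributing a nonzero microlocal multiplicity is the open Schubert cell $\xac$ itself, i.e. $c_{i_0}(\IC{\xa}) = 1$ and $c_i(\IC{\xa}) = 0$ for every smaller Schubert cell $X^\circ_\beta \subset \xa$. Equivalently, by the Microlocal Index Formula (Theorem \ref{thm:microlocal-index}) applied to the sheaf $\IC{\xa}$ and the transition matrix $(Eu_{\overline{U_i}}(u_j))$ being upper triangular with $1$'s on the diagonal, it suffices to show that for every $\beta < \alpha$ the stalk Euler characteristic satisfies
\[ \Chi_{u_\beta}(\IC{\xa}) = \suml_{\beta \le \gamma \le \alpha} Eu_{\xa_\gamma}(u_\beta)\, c_\gamma(\IC{\xa}), \]
and that this forces $c_\gamma(\IC{\xa}) = 0$ for $\gamma < \alpha$ once the $\gamma = \alpha$ term is pinned down. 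The key input is that a \emph{small} resolution $\pap : \zap \to \xa$ exists (Theorem \ref{thm:zel-thm}, part 3), so by Proposition \ref{prop:decomp} we have $Rf_*\C_{\zap}[2s] \cong \IC{\xa}$, whence $\Chi_{u_\beta}(\IC{\xa}) = \Chi(\pap^{-1}(u_\beta)) = d_{\alpha,s}(\beta)$, a quantity we control explicitly via Zelevinsky's formula (Theorem \ref{thm:zel-formula}).

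The argument I would run is an induction on the number of peaks $m$ of $\alpha$ (equivalently on $\dim\xa$), simultaneously over all $\beta \le \alpha$. First one establishes the base case $m = 1$, where $\xa$ is a sub-Grassmannian and hence smooth, so $\IC{\xa} = \C_{\xa}[2s]$ and $CC(\IC{\xa}) = [\overline{T^*_{\xac}\GRb{k}{V}}]$ trivially. For the inductive step, fix the small permutation $s$ and let $j = s(1)$, $\alpha'$ the partition after removing peak $P_j$, with its induced peak ordering $s'$ (which one checks remains small — this is implicit in Zelevinsky's construction of small resolutions). The characteristic cycle can be computed microlocally: writing the coefficients $c_\gamma(\IC{\xa})$ as the solution of the linear system $\Chi_{u_\beta}(\IC{\xa}) = \sum_\gamma Eu_{\xa_\gamma}(u_\beta) c_\gamma(\IC{\xa})$ and recalling (Proposition \ref{prop:dab-eu}, valid because $\pap$ is small and — by the very statement we are proving by induction on the \emph{smaller} varieties $\xa_\gamma$ — $\IC{\xa_\gamma}$ has irreducible characteristic cycle) that $Eu_{\xa_\gamma}(u_\beta) = d_{\gamma, s_\gamma}(\beta)$ for appropriate small resolutions, the system becomes $d_{\alpha,s}(\beta) = \sum_{\gamma} d_{\gamma,s_\gamma}(\beta)\, c_\gamma(\IC{\xa})$. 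So it remains to verify the purely combinatorial identity that the unique solution of this triangular system is $c_\alpha = 1$, $c_\gamma = 0$ otherwise; i.e. that $d_{\alpha,s}(\beta) = d_{\alpha,s_\alpha}(\beta)$ is consistent with the trivial solution, which is automatic since both sides are literally the same function. The real content is therefore transported into checking that the local Euler obstruction data $d_{\gamma,s_\gamma}$ that appears genuinely comes from small resolutions of the $\xa_\gamma$ with irreducible $CC$ — this is exactly where the induction feeds back in.

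The main obstacle, and the step that requires care rather than formal bookkeeping, is the compatibility of smallness under peak removal: one must show that if $s$ is chosen so that $\pap$ is small, then the induced permutation $s'$ makes $\pi^{s'}_{\alpha'}$ a small resolution of $\xa_{\alpha'}$, and more generally that every Schubert variety $\xa_\gamma$ with $\gamma < \alpha$ (not just $\alpha'$) admits its own small resolution with irreducible characteristic cycle so the induction hypothesis applies to it. This is genuinely true — it is part of the content of \cite{Zel} and \cite{BFL} — but extracting it requires tracking Zelevinsky's inductive construction carefully and verifying the small-fiber codimension estimate $\codim\{x : \dim f^{-1}(x) \ge i\} > 2i$ survives the reduction. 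An alternative, cleaner route that sidesteps the combinatorial induction entirely would be to argue directly at the level of perverse sheaves: since $Rf_*\C_{\zap}[2s] \cong \IC{\xa}$ is a single simple perverse sheaf with no other summands, and $CC$ is additive over the composition series of perverse sheaves with $CC(\IC{\xa_\gamma})$ supported on $\overline{T^*_{X^\circ_\gamma}}$ with multiplicity $1$, one concludes $CC(\IC{\xa}) = \sum_\gamma m_\gamma [\overline{T^*_{X^\circ_\gamma}}]$ where $m_\gamma$ is the multiplicity of $\IC{\xa_\gamma}$ as a composition factor of $Rf_*\C[2s]$; by the Decomposition Theorem this pushforward is already simple (no decomposition), so $m_\gamma = \delta_{\gamma\alpha}$. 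This is the approach I would ultimately favor, with the peak-removal induction relegated to verifying that Zelevinsky's small resolutions exist in the first place (Theorem \ref{thm:zel-thm}(3)), which we are entitled to assume.
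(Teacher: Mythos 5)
You should first note that the paper does not prove this statement at all: it is imported verbatim as \cite{BFL}, Theorem 0.1 (the surrounding text says explicitly that Bressler--Finkelberg--Lunts prove it), so there is no in-paper argument to match; your sketch has to stand on its own, and as written it does not, because both of your routes are circular. In the first route, after applying the Microlocal Index Formula to $\IC{\xa}$ you must know the coefficient attached to the \emph{top} stratum, namely $Eu_{\xa}(u_\beta)$ for $\beta<\alpha$; your induction only identifies $Eu_{\overline{X^\circ_\gamma}}(u_\beta)$ with fiber Euler characteristics of small resolutions for the strictly smaller $\gamma$. Your assertion that the remaining consistency check ``$d_{\alpha,s}(\beta)=d_{\alpha,s_\alpha}(\beta)$ is automatic since both sides are literally the same function'' conflates the local Euler obstruction of $\xa$ (a Nash-blowup invariant, defined by \eqref{eq:euler-obs}) with the Euler characteristic $d_{\alpha,s}(\beta)$ of the fiber of a small resolution. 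Their equality is, via Theorem \ref{thm:microlocal-index}, \emph{equivalent} to the irreducibility of $CC(\IC{\xa})$ --- that is precisely the content, and the hypothesis, of Proposition \ref{prop:dab-eu}. Keeping $Eu_{\xa}(u_\beta)$ as an unknown, the triangular system $d_{\alpha,s}(\beta)=\sum_{\gamma<\alpha}d_{\gamma,s_\gamma}(\beta)\,c_\gamma+Eu_{\xa}(u_\beta)$ does not force $c_\gamma(\IC{\xa})=0$ for $\gamma<\alpha$ unless you already know the conclusion.

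The ``cleaner'' decomposition-theorem route has the same defect. Smallness gives $R(\pap)_*\C_{\zap}[2s]\cong\IC{\xa}$, so the only composition factor is $\IC{\xa}$ itself; but writing $CC(\IC{\xa})=\sum_\gamma m_\gamma\bigl[\overline{T^*_{X^\circ_\gamma}\GRb{k}{\V}}\bigr]$ with $m_\gamma$ the composition multiplicities already assumes that $CC(\IC{X_\gamma})$ equals the conormal variety with multiplicity one for every $\gamma$ --- including $\gamma=\alpha$, which is the theorem. Simplicity of a perverse sheaf puts no constraint on its characteristic cycle: the microlocal multiplicities $c_\gamma(\IC{\xa})$ are additional data. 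The missing idea, and the actual content of \cite{BFL}, is an independent computation of $CC\bigl(R(\pap)_*\C_{\zap}[2s]\bigr)$ as a Lagrangian pushforward of the zero section of $T^*\zap$, together with a geometric argument --- specific to Grassmannian Schubert varieties and Zelevinsky's resolutions --- that no conormal varieties of smaller cells survive in that pushforward. That this requires genuine geometry rather than formal perverse-sheaf bookkeeping is shown by Kashiwara--Saito's example of a Schubert variety in a full flag variety whose intersection cohomology sheaf has reducible characteristic cycle; any argument of the purely formal kind you propose would ``prove'' irreducibility there as well. (Your worry about smallness being preserved under peak removal is legitimate but secondary; the circularity above is the fatal gap.)
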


Thus by Theorem \ref{thm:cm-xa}, the \tCM class of a Schubert variety $\xa
 \subset \GR{k}{n}$ equals \[ \pas \csm(\za) \] where $\pa : \za \to
 \xa$ is a \textit{small} resolution.  
 
 We focus now on the Grassmannian $\GR{3}{6}$. 
To make explicit calculations of the classes $\pas \csm(\za)$ we use the technique described in \S\ref{subsec:integration} and a computer program written in Sage (see \cite{SAGE}) which enumerates the fixed point set $\za^T$ and keeps track of the $T$-characters.
 
 For the table below, Schubert varieties
 contained in $\GR{3}{6}$ are indexed by partitions in $\P{3}{3}$.
 All of the partitions $\alpha \in \P{3}{3}$ except $(3,1)$ and
 $(3,3,1)$ have the property that the resolution \[ \pa : Z_{\alpha,
 w_0} \to \xa ,\] where $w_0$ is the order reversing permutation, is
 small. For $(3,1)$ and $(3,3,1)$ the choice of $s = \id$ gives a
 small resolution. 

 We employ the same ordering and labeling of sub-partitions of $(3,3,3)$
 as in Section \ref{subsec:explicit}. The \tCM classes of Schubert
 varieties in $\GR{3}{6}$ are displayed in Table \ref{tab:cm-333}.
 
 \begin{rmk} 
 The entries in Table \ref{tab:cm-333} are all non-negative. Empirical evidence suggests that the Chern-Mather classes of any Schubert variety are positive (in the sense of \S\ref{sec:pos}). This has been checked by computer for all the Schubert varieties in $\GR{5}{10}$, for instance. The proof of Theorem \ref{thm:weak-pos} also confirms this in the infinite family of special cases considered there.
\end{rmk}

 \begin{sidewaystable}[htp] 
 \centering
 \caption{Chern-Mather Classes of Schubert Varieties in $\GR{3}{6}$}
 \label{tab:cm-333}
 \begin{tabular}{c|rrrrrrrrrrrrrrrrrrrr}
  & $\alpha_0$   & $\alpha_1$   & $\alpha_2$   & $\alpha_3$   & $\beta_0$
  & $\alpha_4$
  & $\alpha_5$   & $\beta_1$    & $\alpha_6$   & $\alpha_7$   & $\beta_2$   & $\beta_3$
  & $\alpha_8$   & $\gamma_1$   & $\beta_4$    & $\alpha_9$   & $\gamma_2$   
  & $\beta_5$    & $\gamma_3$   & $\gamma_4$   \\
 \hline
 $\alpha_0$ &  1 &    6 &   17 &   17 &   32 &   32 &   58 &  108 &  108 &   66 &  174 &  146 &  146 &   90 &  270 &   90 &  150 &  150 &   90 &   20 \\ 
 $\alpha_1$ &  . &    1 &    5 &    5 &   12 &   12 &   24 &   54 &   54 &   36 &  108 &   93 &   93 &   69 &  210 &   69 &  144 &  144 &  108 &   30 \\ 
 $\alpha_2$ &  . &    . &    1 &    . &    . &    4 &    5 &   12 &   19 &   11 &   42 &   30 &   40 &   25 &   98 &   37 &   74 &   82 &   66 &   20 \\ 
 $\alpha_3$ &  . &    . &    . &    1 &    4 &    . &    5 &   19 &   12 &   11 &   42 &   40 &   30 &   37 &   98 &   25 &   82 &   74 &   66 &   20 \\ 
 $\beta_0$  &  . &    . &    . &    . &    1 &    . &    . &    5 &    . &    . &   12 &   11 &    . &   15 &   30 &    . &   35 &   25 &   30 &   10 \\ 
 $\alpha_4$ &  . &    . &    . &    . &    . &    1 &    . &    . &    5 &    . &   12 &    . &   11 &    . &   30 &   15 &   25 &   35 &   30 &   10 \\ 
 $\alpha_5$ &  . &    . &    . &    . &    . &    . &    1 &    4 &    4 &    4 &   15 &   15 &   15 &   17 &   52 &   17 &   54 &   54 &   60 &   24 \\ 
 $\beta_1$  &  . &    . &    . &    . &    . &    . &    . &    1 &    . &    . &    4 &    4 &    . &    7 &   15 &    . &   23 &   17 &   27 &   12 \\ 
 $\alpha_6$ &  . &    . &    . &    . &    . &    . &    . &    . &    1 &    . &    4 &    . &    4 &    . &   15 &    7 &   17 &   23 &   27 &   12 \\ 
 $\alpha_7$ &  . &    . &    . &    . &    . &    . &    . &    . &    . &    1 &    . &    4 &    4 &    6 &   15 &    6 &   21 &   21 &   27 &   12 \\ 
 $\beta_2$  &  . &    . &    . &    . &    . &    . &    . &    . &    . &    . &    1 &    . &    . &    . &    4 &    . &    7 &    7 &   12 &    6 \\ 
 $\beta_3$  &  . &    . &    . &    . &    . &    . &    . &    . &    . &    . &    . &    1 &    . &    3 &    4 &    . &   11 &    6 &   15 &    8 \\ 
 $\alpha_8$ &  . &    . &    . &    . &    . &    . &    . &    . &    . &    . &    . &    . &    1 &    . &    4 &    3 &    6 &   11 &   15 &    8 \\ 
 $\gamma_1$ &  . &    . &    . &    . &    . &    . &    . &    . &    . &    . &    . &    . &    . &    1 &    . &    . &    4 &    . &    6 &    4 \\ 
 $\beta_4$  &  . &    . &    . &    . &    . &    . &    . &    . &    . &    . &    . &    . &    . &    . &    1 &    . &    3 &    3 &    8 &    6 \\ 
 $\alpha_9$ &  . &    . &    . &    . &    . &    . &    . &    . &    . &    . &    . &    . &    . &    . &    . &    1 &    . &    4 &    6 &    4 \\ 
 $\gamma_2$ &  . &    . &    . &    . &    . &    . &    . &    . &    . &    . &    . &    . &    . &    . &    . &    . &    1 &    . &    3 &    3 \\ 
 $\beta_5$  &  . &    . &    . &    . &    . &    . &    . &    . &    . &    . &    . &    . &    . &    . &    . &    . &    . &    1 &    3 &    3 \\ 
 $\gamma_3$ &  . &    . &    . &    . &    . &    . &    . &    . &    . &    . &    . &    . &    . &    . &    . &    . &    . &    . &    1 &    2 \\ 
 $\gamma_4$ &  . &    . &    . &    . &    . &    . &    . &    . &    . &    . &    . &    . &    . &    . &    . &    . &    . &    . &    . &    1
\end{tabular} 
\end{sidewaystable}
\clearpage

\subsection{Direct Pushforward} \label{subsec:direct-pushforward} 

For certain permutations $s$, the class $(\pap)_* \CSM(\zap)$ may be
computed in a more direct fashion. The results in the section may be
used to compute the key pushforward of \eqref{eq:csm-formula-grassmannian}

Let $Z = \zap$ where $s$ is the order reversing permutation and let
$X = \prod_i \GRb{k_i}{V}$ be as in \S~\ref{subsec:csm-reso}. For this choice of $s$,
$Z$ is a subvariety of a partial flag variety for
$\GL_n(\C)$. The directed graph encoding the incidence relations which
define $Z$ is shown in Figure
\ref{fig:direct-pushforward}. 
Recall that for $\alpha = [a_i ; b_i]$ we have set $V^j = V_{a_1 +
  \cdots + a_j}$.
Explicitly, $Z$ is defined to be:
$$ Z = \left\{ U_\bullet \in X \mid V^{m-i} \subset U_i \subset
  U_{i-1} \right\} ,$$ where $U_0 := V$ by convention.

The relations $U_i \subset U_{i-1}$ imply that $Z$ is a subvariety of
a partial flag variety:
$$Z \subset \fl(k_m, k_{m-1}, \ldots, k_1; V) \subset X .$$

\begin{figure}[htbp]
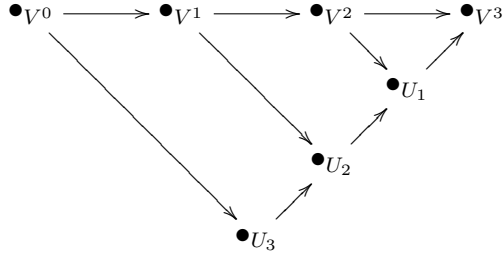

	\centering
	\[ \xygraph{ 
	!{<0cm,0cm>;<1cm,0cm>:<0cm,1cm>::} 
	!{(0,0) }*+{\bullet_{V^0}}="c0" 
	!{(2,0) }*+{\bullet_{V^1}}="c1" 
	!{(4,0) }*+{\bullet_{V^2}}="c2" 
	!{(6,0) }*+{\bullet_{V^3}}="c3" 
	!{(5,-1) }*+{\bullet_{U_1}}="n1" 
	!{(4,-2) }*+{\bullet_{U_2}}="n2" 
	!{(3,-3) }*+{\bullet_{U_3}}="n3" 
	"c0":"c1" "c1":"c2" "c2":"c3"
	"c2":"n1" "n1":"c3"
	"c1":"n2" "n2":"n1"
	"c0":"n3" "n3":"n2"
	} \]
	\caption{Incidence Relation Graph Corresponding to Reverse Peak Order}
	\label{fig:direct-pushforward}
\end{figure}

The strategy of this section is to compute the
pushforward of $\CSM(Z)$ to $A_* \GRb{k_m}{V}$ by representing it
as a cycle in $A_* \fl(k_m, \ldots, k_1; V)$ and using the sequence of
Grassmannian bundles
\[  \fl(k_m, \ldots , k_1; V) \to  \fl(k_m, \ldots, k_2; V) \to
\cdots \to \fl(k_m; V) = \GRb{k_m}{V} .\]

The main task is to understand how to represent the fundamental
class of $Z$ in $A_* \fl(k_m, \ldots, k_1; V)$.
To do this we need the following special case of the Thom-Porteous
formula (\cite{Ful}, Theorem 14.4) as discussed in \cite{Ful}, Example
14.4.16. For the setup, $\bE \to Y$ is a vector bundle over a variety,
$c_\lambda(\bB)$ denotes a Schur determinant in the Chern classes of
$\bB$, and recall that $a \times b$ is a certain rectangular partition.

\begin{prop} \label{prop:thom-porteous} Let $\bA$ be a sub-bundle of
  $\bE$ and let $q, k$ be non-negative integers.  Set $s := \rk \bE - k - \rk
  \bA + q$. Then, the subset $\Omega_q(\bA)$ of $\GRb{k}{\bE}$ defined by
  \[  \{ (y, V) \mid \dim V \cap \bA(y) \ge q \} \] is
  a closed subvariety and its fundamental class is
  \[ [\Omega_q(\bA)] = c_{q \times s}(\bQ - \pi^{-1} \bA) \cap
  [\GRb{k}{\bE}] \]
\end{prop}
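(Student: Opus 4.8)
The plan is to derive the statement from the Thom–Porteous formula, which describes the class of a degeneracy locus for a map of vector bundles. The first step is to reinterpret $\Omega_q(\bA)$ on the Grassmannian bundle $\GRb{k}{\bE}$ as such a degeneracy locus. On $\GRb{k}{\bE}$ we have the tautological exact sequence $0 \to \bS \to \pi^{-1}\bE \to \bQ \to 0$. Composing the inclusion $\pi^{-1}\bA \hookrightarrow \pi^{-1}\bE$ with the surjection $\pi^{-1}\bE \twoheadrightarrow \bQ$ gives a bundle map $\varphi : \pi^{-1}\bA \to \bQ$, and at a point $(y,V)$ the kernel of $\varphi$ is exactly $V \cap \bA(y)$. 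Hence $\dim V \cap \bA(y) \ge q$ is equivalent to $\rk \varphi_{(y,V)} \le \rk\bA - q$, i.e. $(y,V) \in \Omega_q(\bA)$ if and only if $(y,V)$ lies in the degeneracy locus where $\varphi$ drops rank to at most $\rk\bA - q$. In particular $\Omega_q(\bA)$ is closed, being the vanishing locus of the appropriate minors of $\varphi$.

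The second step is to apply the Thom–Porteous formula (\cite{Ful}, Theorem 14.4) to $\varphi : \pi^{-1}\bA \to \bQ$. Writing $e = \rk\bA$ and $f = \rk\bE - k = \rk\bQ$, the locus where the rank of a map from a rank-$e$ bundle to a rank-$f$ bundle is at most $e - q$ has expected codimension $q \cdot (f - (e - q)) = q \cdot (\rk\bE - k - \rk\bA + q) = q \cdot s$, which matches the $s$ in the statement. The Thom–Porteous formula then gives, when the locus has the expected codimension, that its fundamental class equals the Schur polynomial $s_{q \times s}$ in the Chern classes of the ``difference'' bundle $\bQ - \pi^{-1}\bA$; in the notation of the paper this is $c_{q \times s}(\bQ - \pi^{-1}\bA)$. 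Combining with the identification of the locus from the first step yields the claimed formula
\[ [\Omega_q(\bA)] = c_{q \times s}(\bQ - \pi^{-1}\bA) \cap [\GRb{k}{\bE}] . \]
This is precisely the content of \cite{Ful}, Example 14.4.16, so the argument is really just an unwinding of that reference.

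The main technical point to address — and the only place where care is needed — is the genericity hypothesis underlying Thom–Porteous: the formula computes the fundamental class of the degeneracy locus on the nose only when that locus has the expected codimension $q \cdot s$ (otherwise one only gets a class supported on the locus). So the substantive step is to verify that $\Omega_q(\bA)$ indeed has pure codimension $q \cdot s$ in $\GRb{k}{\bE}$. The cleanest way to see this is to work fiberwise: over a point $y \in Y$, the fiber of $\Omega_q(\bA)$ is the Schubert-type variety $\{ V \in \GRb{k}{\bE(y)} \mid \dim V \cap \bA(y) \ge q\}$ inside $\GRb{k}{\bE(y)}$, which is a classical Schubert variety of codimension exactly $q \cdot s$ (for $q$ in the admissible range; when $s < 0$ the locus is empty and the formula reads $0=0$, and when it is nonempty the dimension count is the standard one for Schubert cells). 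Since this holds uniformly over $Y$, the total space $\Omega_q(\bA)$ has codimension $q \cdot s$, so Thom–Porteous applies and gives equality of classes, completing the proof.
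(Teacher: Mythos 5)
The paper itself gives no proof of this proposition --- it is quoted directly from Fulton (Theorem 14.4, as discussed in Example 14.4.16) --- and your argument is essentially an unwinding of that same citation (degeneracy locus of $\pi^{-1}\bA \to \bQ$, kernel $= V \cap \bA(y)$, expected codimension $q\cdot s$), so it is correct and takes the same route. The one imprecision is your claim that expected codimension alone yields the fundamental class ``on the nose'': Fulton's 14.4(b) then only gives a positive cycle supported on $\Omega_q(\bA)$, and the multiplicity-one statement requires 14.4(c) (Cohen--Macaulay ambient plus generic reducedness of the degeneracy scheme) or the local product structure of $\Omega_q(\bA)$ with a reduced Schubert variety in the fiber --- which is exactly what the cited Example 14.4.16 supplies, so your final appeal to that example does close the gap.
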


Let $l_i := \rk \bU_i - \rk \blW_i$, $r_i := \rk \brW_i - \rk \bU_i$
and $\fl := \fl(k_m, \ldots, k_1; V)$ for brevity. Also, let
$\bU_0 := \bV$ by convention.

\begin{prop} \label{prop:za-fundamental-class}
  \[ [Z] = \left( \prod_{i=1}^m \leftclass{i} \right) \cap [\fl] \]
\end{prop}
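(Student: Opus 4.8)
The plan is to prove the sharper statement that for every $j$ the class of $Z^{(j)}=\rho_j(Z)$ inside the partial flag variety $\fl(k_j,\dots,k_1;V)$ that contains it (recall $s=w_0$, so the relations $U_i\subseteq U_{i-1}$ put $Z^{(j)}$ there) is $\big(\prod_{i=1}^{j}\leftclass{i}\big)\cap[\fl(k_j,\dots,k_1;V)]$, and then specialize to $j=m$. The induction runs along Zelevinsky's own construction, and the geometric input is Proposition~\ref{prop:Z-fiber-bundle}: for $s=w_0$ the incidence graph (Figure~\ref{fig:direct-pushforward}) has $\blW_i=\bV^{m-i}$ and $\brW_i=\bU_{i-1}$ (with $\bU_0=\bV$), so $Z^{(j)}\to Z^{(j-1)}$ is the Grassmannian bundle with fibre $\GRb{l_j}{\bU_{j-1}/\bV^{m-j}}$. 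I would view this as sitting inside the ambient bundle $\GRb{k_j}{\bU_{j-1}}\to Z^{(j-1)}$ as the locus $\{\,U_j\mid V^{m-j}\subseteq U_j\subseteq U_{j-1}\,\}$, i.e.\ as the degeneracy locus $\Omega_q(\bV^{m-j})=\{\,\dim(U_j\cap V^{m-j})\ge q\,\}$ with $q=\dim V^{m-j}=k_j-l_j$ (the inequality is forced to be equality, so this is exactly the condition $V^{m-j}\subseteq U_j$).

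The key step — and the one place the reverse peak order is essential — is that $\bV^{m-j}$ is an honest sub-bundle of $\bU_{j-1}$ over $Z^{(j-1)}$, not merely a subspace lying in each fibre. This holds because the incidence relation $\blW_{j-1}\subseteq\bU_{j-1}$ defining $Z^{(j-1)}$ reads $V^{m-j+1}\subseteq U_{j-1}$, and $V^{m-j}\subseteq V^{m-j+1}$ in the fixed flag, so $V^{m-j}\subseteq U_{j-1}$ holds identically on $Z^{(j-1)}$ (for $j=1$ this is automatic since $\bU_0=\bV$). Granting this, Proposition~\ref{prop:thom-porteous} applies with $\bE=\bU_{j-1}$, $k=k_j$, $\bA=\bV^{m-j}$, $q=k_j-l_j$; its exponent is $s=\rk\bU_{j-1}-k_j-q+q=k_{j-1}-k_j=\rk\brW_j-\rk\bU_j=r_j$, and it gives
\[
[Z^{(j)}]=c_{q\times s}\big(\bU_{j-1}/\bU_j-\bV^{m-j}\big)\cap\big[\GRb{k_j}{\bU_{j-1}}\big|_{Z^{(j-1)}}\big].
\]
Since $\bV^{m-j}$ is trivial its Chern classes vanish, so $c_{q\times s}(\bU_{j-1}/\bU_j-\bV^{m-j})=c_{q\times s}(\bU_{j-1}/\bU_j)$; and the standard identity writing a rectangular determinant of Chern classes as the Schur class of the transposed rectangle (cf.\ Lemma~\ref{lem:schur-ctz} and \cite{Ful}, \S14.5, \cite{FP}) turns this into $s_{r_j\times(k_j-l_j)}(\bU_{j-1}/\bU_j)=\leftclass{j}$.

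It then remains to propagate this up the tower. The base case $j=1$ is the identical computation inside $\GRb{k_1}{V}=\fl(k_1;V)$, where $Z^{(1)}=\{\,U_1\mid V^{m-1}\subseteq U_1\,\}=\Omega_{k_1-l_1}(\bV^{m-1})$, giving $[Z^{(1)}]=\leftclass{1}\cap[\GRb{k_1}{V}]$. For the step, let $\psi_j\colon\fl(k_j,\dots,k_1;V)\to\fl(k_{j-1},\dots,k_1;V)$ be the (flat) Grassmannian bundle forgetting $U_j$, and $P_j:=\psi_j^{-1}(Z^{(j-1)})=\GRb{k_j}{\bU_{j-1}}\big|_{Z^{(j-1)}}$, so $[P_j]=\psi_j^*[Z^{(j-1)}]$; pushing the displayed identity forward along $P_j\hookrightarrow\fl(k_j,\dots,k_1;V)$ and applying the projection formula (legitimate since $\leftclass{j}$ is globally defined there, being a polynomial in the Chern classes of $\bU_{j-1}$ and $\bU_j$) gives $[Z^{(j)}]=\leftclass{j}\cap[P_j]$. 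Substituting the inductive hypothesis for $[Z^{(j-1)}]$ and using flatness of $\psi_j$ (it commutes with capping by Chern classes and sends fundamental class to fundamental class) yields $[Z^{(j)}]=\big(\prod_{i=1}^{j}\leftclass{i}\big)\cap[\fl(k_j,\dots,k_1;V)]$; the case $j=m$ is the Proposition. The main obstacle is the sub-bundle verification of the second paragraph; once that is in place, the computation of the exponent $s$, the cancellation of the trivial summand, the Chern/Schur transpose, and the flat-pullback bookkeeping are all routine.
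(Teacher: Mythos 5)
Your proposal is correct and follows essentially the same route as the paper: induct up the tower of Grassmannian bundles, realize each $Z^{(j)}$ inside the pulled-back bundle $\GRb{k_j}{\bU_{j-1}}$ over $Z^{(j-1)}$ as the Thom--Porteous degeneracy locus of the trivial sub-bundle $\bV^{m-j}$, use triviality plus the Chern/Segre transpose identity to get $\leftclass{j}$, and finish with flat pullback and the projection formula. The only difference is cosmetic (the paper inducts on $m$ via the auxiliary variety $\tilde{Z}=\phi^{-1}(Z^{(m-1)})\cap\fl$ rather than on $j$), and your explicit check that $\bV^{m-j}$ is genuinely a sub-bundle of $\bU_{j-1}$ over $Z^{(j-1)}$ is a point the paper leaves implicit.
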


\begin{proof}
  We proceed by induction on $m$. Suppose that $m=1$. Then $\fl$ is a
  Grassmannian and $Z \subset \fl$ is a sub-Grassmannian defined by a
  single incidence relation:

\[ Z = \{ U_1 \in \GRb{k_1}{V} \,\, \mid \,\, \lW_1 \subset U_1  \} .\]
  By the Thom-Porteous Formula (Proposition \ref{prop:thom-porteous})
  with $\bA = \blW_1$ and $q = \rk \blW_1 = k_1 - l_1$ we have
  \[ [Z] = c_{(k_1-l_1) \times r_1}(\bQ - \pi^{-1} \bA) \cap [\fl] .\]
  In this case $\bQ = \bV/\bU_1$. To convert to an expression in terms
  of Segre classes of $\bV/\bU_1$ we use the triviality of $\bA$ to
  conclude that $c(\pi^{-1} \bA) = s(\pi^{-1} \bA)~=~1$.  Thus
  \[ [Z] = s_{r_1 \times (k_1-l_1)}(\bV/\bU_1) \cap [\fl] .\] 
  This verifies the proposition in the case $m=1$.

  Suppose that $m>1$. Recall the projection map $\phi$  from
  the proof of Theorem \ref{thm:c-tangent-bundle}
  \[ \phi : \prod_{i=1}^m \GRb{k_i}{V} \to \prod_{i=1}^{m-1}
  \GRb{k_i}{V} .\] The restriction of $\phi$ to $\fl$ is a Grassmannian
  bundle over $\fl^{(m-1)} := \fl(k_{m-1}, k_{m-2}, \ldots, k_{1}; V)$ with $\bE =
  \bU_{m-1}$, $\bS = \bU_m$, and $\bQ = \bU_{m-1}/\bU_m$.
  Recall that $Z^{(m-1)} := \phi(Z)$ is defined by the same type of
  incidence relations as $Z$. Let $\tilde{Z} := \phi^{-1}(Z^{(m-1)})
  \cap \fl$.

Consider the following diagram of varieties   
\[ \xymatrix{
Z \,\, \ar@{^{(}->}@<-0.5ex>[r]^{\iota_Z} \ar@{>>}[d]^{\phi\lvert_{Z}} & 
\,\, \tilde{Z} \,\, \ar@{^{(}->}@<-0.5ex>[r]^{\iota_{\tilde{Z}}} \ar@{>>}[d]^{\phi\lvert_{\tilde{Z}}} & 
\,\, \fl \ar@{>>}[d]^{\phi\lvert_{\fl}} \\
Z^{(m-1)} \,\, \ar@{=}[r] & \,\, Z^{(m-1)} \,\, \ar@{^{(}->}@<-0.5ex>[r] & \,\, \fl^{(m-1)} 
.} \]
Observe that $\tilde{Z}$ is a Grassmannian bundle over $Z^{(m-1)}$ and
moreover, $Z$ is a sub-Grassmannian bundle of $\tilde{Z}$.
Applying the argument of the case $m=1$ here with $\bA = \blW_m$
and $q = k_m - l_m$ we get
\[ [Z] = \leftclass{m} \cap [\tilde{Z}] .\]
By induction we may assume that
\[ [Z^{(m-1)}] = \left( \prod_{i=1}^{m-1} \leftclass{i} \right) \cap [\fl^{(m-1)}].
\]
Since $\tilde{Z} = (\phi\lvert_{\fl})^{-1}(Z^{(m-1)})$, we have
\[ [\tilde{Z}] = \left( \prod_{i=1}^{m-1} \leftclass{i} \right) \cap [\fl] .\] 
The Segre class $\leftclass{m}$ is pulled back from $\fl$ so we can use the
projection formula for $\iota_{\tilde{Z}}$ to conclude
\begin{eqnarray*}
[Z] & = & \leftclass{m} \cdot \left( \prodl_{i=1}^{m-1} \leftclass{i}
\right) \cap [\fl] \\
& = & \left( \prodl_{i=1}^m \leftclass{i}
\right) \cap [\fl] .
\end{eqnarray*}

\end{proof}

The following general result of Fulton and Pragacz gives an explicit
formula for the pushforward of certain classes on a Grassmannian
bundle including those which appear in Theorem
\ref{thm:c-tangent-bundle} and Proposition
\ref{prop:za-fundamental-class}.

Let $\bE$ be a vector bundle on a variety $Y$ and $f : \GRb{k}{\bE}
\to Y$ an associated Grassmannian bundle. Let $\bS$, $\bQ$ denote the
universal sub and quotient vector bundles on $\GRb{k}{\bE}$. As in
Section \ref{subsec:csm-reso}, $s_\lambda(\bB)$ is a Schur determinant
in the Segre classes of $\bB$.

\begin{thm} \label{thm:pushforward-formula} (\cite{FP}, Prop. 4.1)
Let $s = \rk \bS$,
  $q = \rk \bQ$, $\lambda = (\lambda_1, \cdots, \lambda_q)$, and
  $\mu = (\mu_1, \cdots, \mu_s)$ be arbitrary integer sequences. 
Then,
\[ f_*( s_\lambda(\bQ) \cdot s_\mu(\bS) \cap [\GRb{k}{\bE}]) = 
s_{\Lambda}(\bE) \cap [Y] \] 
where
\[ \Lambda = (\lambda_1-s, \ldots, \lambda_q-s,\mu_1, \ldots, \mu_s) .\]
\end{thm}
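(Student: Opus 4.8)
The plan is to prove this by the splitting principle together with the classical Gysin (``residue'') formula for a Grassmann bundle, thereby reducing the assertion to an identity of Schur polynomials that is essentially the Weyl bialternant formula. Since both sides of the claimed equality are natural under pullback of vector bundles, it suffices to treat the universal case, in which $Y$ may be taken smooth and, after a further pullback along the flag bundle on which $\bE$ acquires a complete filtration (the corresponding pullback on Chow groups being injective), we may assume $\bE = L_1 \oplus \cdots \oplus L_n$ with $n = s + q$. With the Segre-class conventions of \cite{FP}, writing $t_1, \dots, t_n$ for the suitably normalized Chern roots makes $s_j(\bE)$ the $j$-th complete homogeneous symmetric function of the $t_i$, and more generally $s_\Lambda(\bE) = s_\Lambda(t_1, \dots, t_n)$, where $s_\Lambda$ for an arbitrary integer sequence $\Lambda$ denotes the Jacobi--Trudi Schur polynomial (which is $\pm$ an ordinary Schur polynomial, or $0$).

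The second ingredient is the Gysin formula for the Grassmann bundle $f \colon \GRb{k}{\bE} \to Y$. Realizing the complete flag bundle $\fl(1, \dots, k; \bE) \to Y$ as a tower of projective bundles and comparing it with its factorization through $\GRb{k}{\bE}$ yields the standard residue formula: for any polynomial $P$ symmetric separately in variables $v_1, \dots, v_q$ and $u_1, \dots, u_s$,
\[
f_*\!\big( P(\bQ; \bS) \cap [\GRb{k}{\bE}] \big) \;=\; \sum_{w}\, w\!\left( \frac{P(t_1, \dots, t_q;\, t_{q+1}, \dots, t_n)}{\displaystyle\prod_{1 \le i \le q < j \le n}(t_i - t_j)} \right),
\]
where $P(\bQ;\bS)$ is $P$ evaluated on the Chern roots of $\bQ$ and of $\bS$, and $w$ runs over coset representatives of $S_n /(S_q \times S_s)$ acting by permuting $t_1, \dots, t_n$. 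The assignment of the $\bQ$-roots to the first block and the $\bS$-roots to the second, together with the sign of the Vandermonde-type denominator, is exactly what the \cite{FP} conventions fix.

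It then remains to carry out the symmetric-function computation with $P = s_\lambda(v)\, s_\mu(u)$. Using the block bialternant identities $s_\mu(u)\, a_{\delta_s}(u) = a_{\mu + \delta_s}(u)$ and $s_\lambda(v)\, a_{\delta_q}(v) = a_{\lambda + \delta_q}(v)$, where $\delta_m = (m-1, \dots, 1, 0)$ and $a_\nu(z_1, \dots, z_m) = \det(z_i^{\nu_j})$, together with the factorization
\[
a_{\delta_n}(t) \;=\; a_{\delta_q}(t_1, \dots, t_q)\cdot a_{\delta_s}(t_{q+1}, \dots, t_n)\cdot \!\!\!\prod_{1 \le i \le q < j \le n}\!\!\!(t_i - t_j),
\]
the right-hand side of the Gysin formula becomes $\sum_w w\big(\, a_{\lambda + \delta_q}(t_1, \dots, t_q)\, a_{\mu + \delta_s}(t_{q+1}, \dots, t_n)\,/\, a_{\delta_n}(t)\,\big)$. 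Since $a_{\delta_n}(t)$ is alternating under all of $S_n$ while the numerator is the alternating sum over $S_q \times S_s$ of the single monomial $t^\epsilon$ with $\epsilon = (\lambda_1 + q - 1, \dots, \lambda_q,\; \mu_1 + s - 1, \dots, \mu_s)$, the sum over coset representatives assembles into the full bialternant $a_\epsilon(t)/a_{\delta_n}(t)$, which by the Weyl formula equals $s_{\epsilon - \delta_n}(t)$. A direct subtraction gives $\epsilon - \delta_n = (\lambda_1 - s, \dots, \lambda_q - s,\; \mu_1, \dots, \mu_s) = \Lambda$, which is the asserted identity.

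The main step is the symmetric-function computation of the third paragraph, but it is routine once the setup is in place; the real care needed is bookkeeping of conventions: fixing the \cite{FP} Segre-class signs, choosing the normalization of the Vandermonde factor and the ordering of the two root blocks in the Gysin formula so that they are mutually consistent, and tracking the degree shift by $s$ in the $\bQ$-block produced in passing from $\epsilon$ to $\epsilon - \delta_n$. Once these conventions are pinned down, every step above is straightforward.
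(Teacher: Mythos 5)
The paper does not prove this statement at all: it is quoted directly from Fulton--Pragacz (\cite{FP}, Prop.\ 4.1), so there is no internal proof to compare against. Your argument is a correct and standard route to the result: reduce by the splitting principle (pull back to the flag bundle of $\bE$, where pullback on Chow groups is injective --- note that no smoothness of $Y$ and no appeal to a ``universal case'' is actually needed for this), invoke the symmetrizing-operator form of the Gysin map for $f:\GRb{k}{\bE}\to Y$, whose denominator is the Euler class of the relative tangent bundle $\bS^\vee\otimes\bQ$, and then run the bialternant computation. The exponent bookkeeping checks out: $\epsilon-\delta_n=(\lambda_1-s,\ldots,\lambda_q-s,\mu_1,\ldots,\mu_s)=\Lambda$, and the coset sum does assemble into $a_\epsilon/a_{\delta_n}$ because the summand is $(S_q\times S_s)$-invariant. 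The proof in \cite{FP} runs instead through the flag-bundle factorization and iterated projective-bundle pushforwards with determinantal manipulations, which is essentially the mechanism your quoted residue formula encapsulates; making the $S_n/(S_q\times S_s)$-symmetrization explicit buys you a one-line explanation of the shift by $s$ in the $\bQ$-block.

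One caveat you should make explicit, since the statement allows arbitrary integer sequences: the identities $s_\mu(u)\,a_{\delta_s}(u)=a_{\mu+\delta_s}(u)$ and $a_\epsilon/a_{\delta_n}=s_{\epsilon-\delta_n}$ are false as written when $\mu+\delta_s$ (or $\lambda+\delta_q$, hence $\epsilon$) has a negative entry: the bialternant is then a nonzero Laurent polynomial, while the Jacobi--Trudi determinant has a row of zeros and vanishes. This does not threaten the theorem --- in that situation $s_\mu(\bS)$ (resp.\ $s_\lambda(\bQ)$) is identically zero and the corresponding rows of the determinant defining $s_\Lambda(\bE)$ vanish as well, so both sides are zero --- but your proof should dispose of this degenerate case separately and apply the bialternant identities only when all shifted entries are non-negative (repeated entries are then handled consistently by straightening on both sides, with matching signs).
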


\begin{rmk}
  If the sequence $\Lambda$ is not a partition, then either there is a
  partition $\nu$ (explicitly determined by $\Lambda$) such that
  $s_\Lambda(\bE) = \pm s_\nu(\bE)$, or else $s_\Lambda(\bE) = 0$. See
  \cite{FP} for details.
\end{rmk}

Let $\psi :=  {\pr_m}\lvert_{\fl} : \fl \to \GRb{k_m}{V}$.
We give an explicit formula for the pushforward under
$\psi$ of classes of the form 
\[ \left( \prod_{i=1}^m s_{\lambda(i)}(\bU_{i-1}/\bU_i) \right) \cap
[\fl] .\]

\begin{prop} \label{prop:direct-pushforward}
\[ \psi_* \left( \prod_{i=1}^m s_{\lambda(i)}(\bU_{i-1}/\bU_i) \cap
[\fl] \right) = s_\Lambda(\bV/\bU_m) \cap [\GRb{k_m}{V}] \]
where $\Lambda$ has a block form $\Lambda = (\Lambda_1, \ldots,
\Lambda_m)$
with
\[ \Lambda_i = \lambda(i) - r_i \times (r_{i+1} + r_{i+2} + \cdots + r_{m}) \]
\end{prop}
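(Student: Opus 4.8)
The plan is to factor $\psi$ as a tower of Grassmannian bundle projections and to apply the pushforward formula of Theorem~\ref{thm:pushforward-formula} one stage at a time, keeping track of how the blocks of the output sequence get modified along the way.

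Set $\fl^{[j]} := \fl(k_m, k_{m-1}, \ldots, k_j; V)$ for $j = 1, \ldots, m$, so that $\fl^{[1]} = \fl$ and $\fl^{[m]} = \GRb{k_m}{V}$, and let $\eta_j : \fl^{[j]} \to \fl^{[j+1]}$ be the morphism that forgets the subspace $U_j$; then $\psi = \eta_{m-1} \circ \cdots \circ \eta_1$. Given a flag $U_m \subset \cdots \subset U_{j+1}$ in $\fl^{[j+1]}$, choosing $U_j$ is the same as choosing the $(k_j - k_{j+1})$-dimensional subspace $U_j/U_{j+1}$ of $V/U_{j+1}$, so $\eta_j$ exhibits $\fl^{[j]}$ as the Grassmannian bundle $\GRb{k_j - k_{j+1}}{\bV/\bU_{j+1}}$ over $\fl^{[j+1]}$. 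Under this identification the ambient, universal sub-, and universal quotient bundles of $\eta_j$ are $\bE = \bV/\bU_{j+1}$, $\bS = \bU_j/\bU_{j+1}$ (of rank $r_{j+1} = k_j - k_{j+1}$), and $\bQ = \bV/\bU_j$ (of rank $n - k_j = r_1 + \cdots + r_j$), where we keep the convention $\bU_0 = \bV$.

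Now push forward along the tower. Put $\Lambda^{(0)} := \lambda(1)$, and regard each $\lambda(i)$ as a sequence of exactly $r_i$ integers. The claim is that on $\fl^{[j]}$ the class
\[ s_{\Lambda^{(j-1)}}(\bV/\bU_j)\cdot s_{\lambda(j+1)}(\bU_j/\bU_{j+1})\cdot \prodl_{i=j+2}^{m} s_{\lambda(i)}(\bU_{i-1}/\bU_i)\cap[\fl^{[j]}] \]
pushes forward under $\eta_j$ to the class of the same form with $j$ replaced by $j+1$, where $\Lambda^{(j)}$ is obtained from $\Lambda^{(j-1)}$ by subtracting $r_{j+1}$ from each of its $n-k_j$ parts and then appending the $r_{j+1}$ parts of $\lambda(j+1)$. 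For $j=1$ the displayed class is exactly the class in the statement. The product over $i\ge j+2$ is a pullback along $\eta_j$, since those bundles involve only $\bU_{j+1},\ldots,\bU_m$; the remaining two factors are precisely $s_{\Lambda^{(j-1)}}(\bQ)\cdot s_{\lambda(j+1)}(\bS)$. So the claim follows from the projection formula together with Theorem~\ref{thm:pushforward-formula}, using the straightening conventions of the remark following Theorem~\ref{thm:pushforward-formula} whenever $\Lambda^{(j)}$ fails to be a partition. After stage $m-1$ we reach $s_{\Lambda^{(m-1)}}(\bV/\bU_m)\cap[\GRb{k_m}{V}]$, so $\Lambda=\Lambda^{(m-1)}$.

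It remains to unwind the recursion. The block $\lambda(i)$ first appears (unchanged) as the last block of $\Lambda^{(i-1)}$, and is thereafter decreased by $r_{i+1}, r_{i+2},\ldots,r_m$ at the successive stages $i, i+1,\ldots,m-1$, no step altering earlier blocks except by such a shift. Hence the $i$-th block of $\Lambda=\Lambda^{(m-1)}$ is $\lambda(i)$ with $r_{i+1}+\cdots+r_m$ subtracted from each of its $r_i$ entries, i.e. $\Lambda_i=\lambda(i)-r_i\times(r_{i+1}+\cdots+r_m)$, the correction being the empty rectangle when $i=m$. This is the asserted formula. The entire content of the argument is this index bookkeeping plus the (immediate) verification that the factors declared to be pullbacks really are; no genuine obstacle arises beyond organizing the iteration. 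One could equivalently phrase it as an induction on $m$, using the first pushforward to replace $\fl$ by $\fl^{[2]}$ and relabelling $(\Lambda^{(1)},\lambda(3),\ldots,\lambda(m))$ as the new input data.
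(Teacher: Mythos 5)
Your proof is correct and takes essentially the same route as the paper: the paper's own (sketched) argument likewise factors $\psi$ through the tower of Grassmannian bundles $\fl(k_m,\ldots,k_j;V)$ and iterates Theorem \ref{thm:pushforward-formula} together with the projection formula, exactly as in your stage-by-stage bookkeeping (compare Example \ref{ex:direct-pushforward} for $m=3$). Note only that since Theorem \ref{thm:pushforward-formula} already allows arbitrary integer sequences, no intermediate straightening of the $\Lambda^{(j)}$ is actually needed.
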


	The proof of Proposition \ref{prop:direct-pushforward} 
	proceeds by iteratively applying the Pushforward Formula 
	(Theorem \ref{thm:pushforward-formula}) along with the projection formula.

\begin{ex} \label{ex:direct-pushforward}
	We illustrate in an example with $m=3$.
	Consider the sequence of Grassmannian bundles
	\[ \fl( k_3, k_2, k_1; V ) \overset{\psi_1}{\to} 
	\fl( k_3, k_2; V ) \overset{\psi_2}{\to} 
	\fl(k_3; V) = \GRb{k_3}{V} .\]
	The vector bundles associated to $\psi_i$ are
	$\bE_i := \bV/\bU_{i+1}$, $\bS_i := \bU_i/\bU_{i+1}$, and
	$\bQ_i := \bV/\bU_i$ where, as above, $\bU_0 := \bV$ by convention.
	Note that $\bE_{i} = \bQ_{i+1}$ for $i=1,2$.
	
	We factor $\psi = \psi_2 \circ \psi_1$ and compute:
	\begin{eqnarray*}
		\psi_* \left( \prod_{i=1}^3 s_{\lambda(i)}(\bU_{i-1}/\bU_i) \cap [\fl] \right) & = & 
		(\psi_2 \circ \psi_1)_* \left( \prod_{i=1}^3 s_{\lambda(i)}(\bS_{i-1}) \cap [\fl] \right)\\
		& = & (\psi_2)_* \left( s_{\lambda(3)}(\bS_{2}) \cdot
 		(\psi_1)_* ( s_{\lambda(1)}(\bQ_1) s_{\lambda(2)}(\bS_1) \cap [\fl_1] ) \right) \\
		& = & (\psi_2)_* \left( s_{\lambda(3)}(\bS_{2})
		\cdot s_{( \lambda(1)-(r_1 \times r_2), \lambda(2))}(\bQ_2) \cap [\fl_2] \right) \\
		& = & s_{(\lambda(1)-(r_1 \times (r_2+r_3)), \lambda(2)-(r_2 \times r_3), \lambda(3))}(\bQ_3)  \cap [\fl_3]
	\end{eqnarray*}
	Note that $\bQ_3 = \bV/\bU_3$, so the proposition is verified in the case $m=3$.
\end{ex}

To conclude this section we record a simple positivity statement for
$\csm(Z)$. This will be used in the sequel.

\begin{prop} \label{prop:pos-csm-Z} Let $i : Z \into \fl$ be the
  inclusion. Then, $i_* \csm(Z)$ is a positive class in
  $A_*\fl$. Moreover, each term in the decomposition of
  $i_*\csm(Z)$ given by Corollary \ref{cor:alt-c-tangent-bundle} is
  non-negative.
\end{prop}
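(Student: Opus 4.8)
The plan is to reduce the statement to a positivity check on two families of classes on partial flag varieties: the fundamental class $[Z]$ computed in Proposition~\ref{prop:za-fundamental-class}, and the Chern class $c(TZ)$ computed in Theorem~\ref{thm:c-tangent-bundle} (equivalently, expanded as in Corollary~\ref{cor:alt-c-tangent-bundle}). Since $i_*\csm(Z) = i_*\bigl(c(TZ)\cap[Z]\bigr)$ and $[Z]$ is itself a Schur polynomial expression in the quotient bundles $\bV/\bU_m$ (via Proposition~\ref{prop:za-fundamental-class}), the whole class $i_*\csm(Z)$ is a product of terms, each of which is a Schur polynomial $s_\mu$ or $s_{\widetilde{\mathbb{D}}(\lambda)}$ in one of the bundles $(\bU_i/\blW_i)^\vee$, $\brW_i/\bU_i$, or $\bV/\bU_m$, with non-negative integer coefficients (the $D^{l_i}_{\lambda,\mu}$ are non-negative by Remark~\ref{rmk:D-lambda-mu-pos}).

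First I would recall that each Schur class $s_\nu(\bB)$ of a globally generated (or nef) vector bundle $\bB$ on a flag variety is represented by an effective cycle: this is the content of the positivity of Schur polynomials in Chern classes of nef bundles (Fulton--Lazarsfeld). On $\fl = \fl(k_m,\ldots,k_1;V)$, every quotient bundle $\bU_{i-1}/\bU_i$ (and more generally $\brW_i/\bU_i$, $\bV/\bU_m$) is globally generated, being a quotient of a trivial bundle; and each dual sub-bundle $(\bU_i/\blW_i)^\vee$ is also globally generated because $\bU_i/\blW_i$ is a sub-bundle of a trivial bundle, so its dual is a quotient of a trivial bundle. Hence every individual Schur factor appearing in the expansion of Corollary~\ref{cor:alt-c-tangent-bundle} and in $[Z]$ from Proposition~\ref{prop:za-fundamental-class} is a non-negative class.

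Next I would combine these observations. The expansion of Corollary~\ref{cor:alt-c-tangent-bundle} writes $c(TZ)$ as a sum, over tuples $(\mu^{(i)},\lambda^{(i)})$, of products $\prod_i D^{l_i}_{\lambda^{(i)},\mu^{(i)}}\, s_{\mu^{(i)}}((\bU_i/\blW_i)^\vee)\, s_{\widetilde{\mathbb{D}}(\lambda^{(i)})}(\brW_i/\bU_i)$, with non-negative coefficients and non-negative Schur factors; capping with the effective class $[Z] = \bigl(\prod_i \leftclass{i}\bigr)\cap[\fl]$ (itself a non-negative combination, in fact a single term with coefficient $1$, of Schur classes in nef bundles) preserves non-negativity. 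Thus each term in the decomposition of $i_*\csm(Z)$ is a product of a non-negative integer and several non-negative classes, hence non-negative; summing gives that $i_*\csm(Z)$ is positive in $A_*\fl$.

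The main obstacle is making precise that ``a non-negative combination of effective Schur classes, multiplied together, stays non-negative'' in the Chow ring $A_*\fl$ --- products of effective classes need not be effective in general, so one must invoke the specific structure here: $\fl$ is a homogeneous space, its Chow ring has a basis of Schubert classes, and the relevant Schur polynomials in nef bundles expand with non-negative coefficients in that Schubert basis (this is where Fulton--Lazarsfeld positivity, or a direct Littlewood--Richardson argument using the known Chern classes of the universal bundles on $\fl$, is needed); products of such classes then remain non-negative because the structure constants of the Schubert basis are themselves non-negative. I would therefore phrase the proof as: (1) expand $i_*\csm(Z)$ via Corollary~\ref{cor:alt-c-tangent-bundle} and Proposition~\ref{prop:za-fundamental-class} into a sum of products of Schur classes of the globally generated bundles listed above, with non-negative coefficients; (2) cite that each such Schur class lies in the non-negative cone spanned by Schubert classes; (3) conclude by non-negativity of Schubert structure constants that each product, and hence the whole sum, is non-negative. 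The second claim of the proposition is then immediate, since we have exhibited precisely the term-by-term decomposition asserted.
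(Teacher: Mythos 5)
Your outline follows the same skeleton as the paper's proof (expand $i_*\csm(Z)$ via Corollary \ref{cor:alt-c-tangent-bundle} and Proposition \ref{prop:za-fundamental-class}, use non-negativity of the $D^{l_i}_{\lambda,\mu}$, and conclude because products of non-negative classes on the homogeneous space $\fl$ stay non-negative), but it has a genuine gap at the crucial positivity input. The bundles $\brW_i/\bU_i=\bU_{i-1}/\bU_i$ for $i\ge 2$ are \emph{not} quotients of trivial bundles and are not globally generated, nor even nef: $\bU_{i-1}$ is a tautological sub-bundle, so $\bU_{i-1}/\bU_i$ is only a sub-bundle of the quotient $\bV/\bU_i$, i.e.\ a subquotient of the trivial bundle. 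Concretely, on the Schubert line in $\fl$ along which all $U_j$ with $j\ne i-1$ are fixed and $U_{i-1}$ varies between $U_i$ and a fixed $(k_{i-1}+1)$-dimensional subspace of $U_{i-2}$, the class $c_1(\bU_{i-1}/\bU_i)$ has degree $-1$; equivalently, with the Fulton--Pragacz sign convention used in the paper, $s_{(1)}(\bU_{i-1}/\bU_i)=c_1(\bU_{i-1})-c_1(\bU_i)$ is a difference of two distinct Schubert divisor classes and hence not a non-negative class. So the Fulton--Lazarsfeld/global-generation step, which your argument needs for \emph{every} Schur factor, fails precisely for the factors $s_{\widetilde{\mathbb{D}}(\lambda)}(\bU_{i-1}/\bU_i)$ and for the factors $s_{r_i\times(k_i-l_i)}(\bU_{i-1}/\bU_i)$ coming from $[Z]$ (the class $[Z]$ itself is of course effective, but only because it is a fundamental class, not because those factors are positive). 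Since your final step via non-negative Schubert structure constants requires each factor separately to lie in the non-negative span of Schubert classes, the argument gives nothing for any term involving $i\ge 2$, which is most of the decomposition whenever $m\ge 2$.

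Your treatment of the remaining ingredients is fine and coincides with the paper: $(\bU_i/\blW_i)^\vee$ is a quotient of the trivial bundle $(\bV/\blW_i)^\vee$ (the paper uses the triviality of $\blW_i$ to replace it by $\bU_i^\vee$ and cites the standard fact about Chern classes of globally generated bundles), and the paper's appeal to the transitive group action is equivalent to your appeal to non-negative structure constants. But for the middle quotients the paper does \emph{not} argue via global generation at all: it invokes an identification of the classes $s_\nu(\bU_{i-1}/\bU_i)$ appearing here with duals of classes of certain Schubert varieties in $\fl$, citing \cite{Fulton:Flags}, \S 8. Some substitute for that input --- either such an identification for the specific partitions that occur, or an argument exploiting that after restriction to $Z$ the relevant bundles acquire positivity from the incidence conditions $\bV^{m-i}\subset\bU_i$ --- is exactly what is missing from your write-up; without it, neither the term-by-term non-negativity nor the positivity of $i_*\csm(Z)$ is established.
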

\begin{proof}
Corollary \ref{cor:alt-c-tangent-bundle} states
\[ \csm(Z) = \left( \prodl_{i=1}^m \suml_{\mu \subset \lambda \subset
    l_i \times r_i} D^{l_i}_{\lambda, \mu} \, s_\mu((\bU_i/\blW_i)^\vee) \,
  s_{\widetilde{\mathbb{D}}(\lambda)}(\bU_{i-1}/\bU_i) \right) \cap
[Z] .\] 
Using the triviality of
$\blW_i$ and Proposition
\ref{prop:za-fundamental-class} we have
\[ i_* \csm(Z) = \left( \prodl_{i=1}^m \suml_{\mu \subset \lambda
    \subset l_i \times r_i} D^{l_i}_{\lambda, \mu} \,
  s_\mu(\bU_i^\vee) \,
  s_{\widetilde{\mathbb{D}}(\lambda)}(\bU_{i-1}/\bU_i) \, s_{r_i
    \times (k_i-l_i)}(\bU_{i-1}/\bU_i) \right) \cap [\fl] .\]

Recall from Remark \ref{rmk:D-lambda-mu-pos} that the coefficients
$D^N_{\lambda,\mu}$ are non-negative. Thus it suffices to show that each
term is non-negative. It is a standard fact that intersections of
non-negative classes on a (partial) flag variety are
non-negative. This follows from the fact that $\fl$ carries a
transitive algebraic group action. So we are reduced to showing that
$s_\nu(\bU_i^\vee)$ and $s_\nu(\bU_{i-1}/\bU_i)$ are dual to
non-negative classes for all partitions $\nu$. 

For $s_\nu(\bU_i^\vee)$, we observe that
$\bU_i^\vee$ is generated by sections (i.e. it is a quotient
of a trivial bundle). Indeed, there is a short exact sequence on $Z$,
\[ 0 \to \bU_i \to \bV \to \bV/\bU_i \to 0 ,\]
 which dualizes to
 \[ 0 \to (\bV/\bU_i)^\vee \to \bV^\vee \to \bU_i^\vee \to 0 .\] Thus
 $\bU_i^\vee$ is a quotient of $\bV^\vee$ which is trivial.  It is a
 well known fact that Chern classes of generated bundles are dual to
 non-negative classes (cf. \cite{Ful} Example 12.1.7(a) )

For $s_\nu(\bU_{i-1}/\bU_i)$, we use the fact that classes of this
form are dual to the classes of certain Schubert varieties in
$\fl$ and hence are positive (cf. \cite{Fulton:Flags}, \S~8).

\end{proof}

\begin{rmk} \label{rmk:pushforward}
Proposition \ref{prop:direct-pushforward} suffices to calculate the
pushforward of all the classes appearing $[Z] \in A_* \fl$ from Proposition
\ref{prop:za-fundamental-class}. In order to use the above pushforward
results for the CSM class $c(TZ) \cap [Z]$ (regarded in $A_* \fl$) one
first needs to decompose factors of the form $c_\mu( \bU_i/\bV_{m-i}
)$ which appear in $c(TZ)$ (see Theorem \ref{thm:c-tangent-bundle}) in
terms of Chern classes of bundles of the form $\bU_{i-1}/\bU_i$. This
may be done combinatorially in terms of Littlewood-Richardson
coefficients, see \cite{Manivel}, Proposition 1.5.26.
\end{rmk}

\section{Positivity Conjecture}
\label{sec:pos}

In this section we discuss the issue of positivity of CSM classes of
Schubert cells and varieties. The methods developed in previous
sections may be used to prove positivity in several special cases.  We
examine the positivity of classes $\csm(\xa)$ when $\alpha$ has a special
form. The type of partition considered corresponds to a Zelevinsky resolution whose geometry is particularly
simple. For arbitrary $\alpha$ it is possible to prove the positivity
of coefficients in $\csm(\xac)$ with small codimension .

\subsection{Known Results}
\label{subsec:known-pos-results}

In \cite{AM}, Aluffi and Mihalcea state the following 
conjecture.

\begin{conj} \label{conj:pos} (\cite{AM}, Conjecture 1,
  \S~4.1) For all partitions $\alpha \in \P{k}{n-k}$, $\csm(\xac)$
is a positive class in $A_* \GR{k}{n}$.
\end{conj}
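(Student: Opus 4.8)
The plan is to push the resolution method of \S\ref{subsec:csm-by-resolution} as far as it will go and then to isolate the combinatorial obstruction. Formula \eqref{eq:csm-formula-grassmannian} writes
\[ \csm(\xac) = \sum_{\beta \le \alpha} \eab \cdot \pbs\csm(\zb) \]
as an integer combination of the classes $\pbs\csm(\zb)$, and each of the latter is a \emph{positive} class in $A_*\GR{k}{n}$ by Proposition \ref{prop:pos-csm-Z} --- in fact every term of its Schur-class expansion is non-negative. If all the coefficients $\eab$ were non-negative we would be done immediately; the real content of Conjecture \ref{conj:pos} is that the cancellation forced by the sign-alternation of the inverse matrix $(\eab) = (\dab)^{-1}$ is never destructive. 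So the strategy is: (i) make the positive expansion of $\pbs\csm(\zb)$ in the Schubert basis fully explicit; (ii) make $(\eab)$ explicit; and (iii) show that the resulting alternating sum of non-negative quantities is non-negative.

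For step (i) I would use, for each $\beta$, the Zelevinsky resolution attached to the order-reversing permutation, whose incidence graph is the linear chain of Figure \ref{fig:direct-pushforward}. For that choice $[\zb]$ and $c(T\zb)$ are given by the explicit Schur-determinant formulas of Proposition \ref{prop:za-fundamental-class} and Corollary \ref{cor:alt-c-tangent-bundle}, and the pushforward $\pbs$ to $\GRb{k}{V}$ is computed by iterating the Fulton--Pragacz formula (Theorem \ref{thm:pushforward-formula}, Proposition \ref{prop:direct-pushforward}); expanding the resulting products of Schur classes into the Schubert basis by Littlewood--Richardson yields $\pbs\csm(\zb) = \sum_\gamma N_{\beta,\gamma}[X_\gamma]$ with each $N_{\beta,\gamma}$ an explicit sum of products of the non-negative integers $D^N_{\lambda,\mu}$ of Definition \ref{defn:D-lambda-mu} (Remark \ref{rmk:D-lambda-mu-pos}) and Littlewood--Richardson coefficients.

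For steps (ii)--(iii) I would first restrict to partitions $\alpha$ whose Zelevinsky resolution is geometrically as simple as possible --- those with a single peak, or more generally those for which $\pa$ has at most one level of positive-dimensional fibers. Zelevinsky's formula (Theorem \ref{thm:zel-formula}) then collapses to a single product of two binomial coefficients, so $(\dab)$ is, after reindexing by the relative depth vector, a product of two Pascal-type matrices and its inverse $(\eab)$ has a closed form given by a short binomial inclusion--exclusion. Substituting the expansions from step (i), the coefficient $\gamma_{\alpha,\gamma}$ of $[X_\gamma]$ in $\csm(\xac)$ becomes an explicit alternating sum of products of binomial coefficients, $D^N_{\lambda,\mu}$'s, and Littlewood--Richardson numbers, and I would prove its non-negativity by exhibiting it as the cardinality of a family of non-intersecting lattice paths: the Lindström--Gessel--Viennot model already governs the $D^N_{\lambda,\mu}$ (cf.\ Remark \ref{rmk:D-lambda-mu-pos}), and the binomial inclusion--exclusion is reinterpreted as the sieve imposing non-intersection. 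This is the mechanism behind the infinite family of Theorem \ref{thm:weak-pos} in \S\ref{subsec:weak_conj}; and in the degenerate case where $\xa$ is already smooth and $\pa$ is an isomorphism there is nothing to prove, since $\csm(\xa) = \pas\csm(\za)$ is then positive by Proposition \ref{prop:pos-csm-Z}.

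The main obstacle --- and the reason the full conjecture stays open --- is precisely the control of cancellation when $\alpha$ has several peaks: there $(\dab)$ is an iterated convolution of binomial matrices, $(\eab)$ has many terms of both signs, and the positive expansions of the various $\pbs\csm(\zb)$ do not align term by term, so no single lattice-path model visibly absorbs all the minus signs. Overcoming this would seem to require either a smarter, $\beta$-dependent choice of resolutions engineered to minimize cancellation, or a genuinely new positivity input identifying the coefficients $\gamma_{\alpha,\gamma}$ directly (e.g.\ as honest intersection numbers, or as the output of a single combinatorial sieve). For the codimension-one coefficients of an arbitrary cell, by contrast, only finitely many $\beta$ contribute, the relevant $\dab,\eab$ are computed by hand, and one reads off a closed, manifestly non-negative combinatorial formula for $\gamma_{\alpha,\gamma}$; this is carried out in \S\ref{subsec:codimension-one}.
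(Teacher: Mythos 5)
You have not given a proof, and you are right not to claim one: the statement you were assigned is Conjecture \ref{conj:pos}, which is an open conjecture of Aluffi--Mihalcea that the paper itself does not prove. The paper only establishes partial results in its direction: the weak form (positivity of $\csm(\xa)$ for Schubert \emph{varieties}, not cells) for the special shapes $(b+p,p^a)$ and their conjugates in Theorem \ref{thm:weak-pos}, and non-negativity of the codimension-one coefficients of an arbitrary cell in Theorem \ref{thm:codim-1-formula}. So there is no ``paper proof'' to measure you against; what can be compared is your roadmap versus the paper's partial progress, and there your diagnosis is essentially the paper's own. You correctly identify that \eqref{eq:csm-formula-grassmannian} expresses $\csm(\xac)$ as an alternating combination of the manifestly positive classes $\pbs\csm(\zb)$ (Proposition \ref{prop:pos-csm-Z}, Corollary \ref{cor:alt-c-tangent-bundle}, Theorem \ref{thm:pushforward-formula}), and that the entire difficulty is controlling the cancellation coming from the sign-alternating inverse matrix $(\eab)=(\dab)^{-1}$; this is exactly the obstruction the paper leaves unresolved, and it is why only the cases with geometrically simple resolutions (one relevant peak, fibers $\CP^b$ over a single sub-Grassmannian) and the low-codimension coefficients are handled.

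Two smaller points of divergence from what the paper actually does in its special cases. First, Theorem \ref{thm:weak-pos} is about $\csm(\xa)$, not $\csm(\xac)$, and its proof is not a lattice-path sieve: it writes $\csm(\xa)=\pas\csm(\za)-b\cdot\csm(\xb)$ (Lemma \ref{lem:weak-pos-formula}), isolates one explicit term $T$ in the positive decomposition of $i_*\csm(\za)$ whose pushforward is $\csm(X_{\alpha'})$ (Lemma \ref{lem:weak-pos-pushforward}), counts its multiplicity $m=b+1>b$, and finishes by a direct comparison of the $D^{N}_{\lambda,\mu}$ coefficients; the Lindstr\"om--Gessel--Viennot interpretation enters only as the reason the $D^{N}_{\lambda,\mu}$ are non-negative (Remark \ref{rmk:D-lambda-mu-pos}), not as a sieve absorbing the minus signs. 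Second, your hoped-for closed form for $(\eab)$ when $\alpha$ has several peaks is precisely what is missing, and the paper offers no mechanism for it either. So: as an attempt on the full conjecture your proposal has a genuine (and admitted) gap --- step (iii) is not carried out and no construction is given that dominates the negative contributions --- but as a reconstruction of the paper's strategy and of why the conjecture remains open, it is accurate.
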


Since $\csm(\xac) = [\xa] + \sum_{\beta < \alpha} n_\beta [X_{\beta}]$,
the conjecture is equivalent to the coefficients $n_\beta$
being non-negative. Some cases of the conjecture are proved in \cite{AM} and \cite{Mihalcea}. 
For example the conjecture is known for Schubert cells $\xac \subset \GR{k}{n}$ 
when  $k \le 3$. Also, $n_{\beta}$ is known
to be non-negative when $\beta = (b)$ for $b$ a non-negative integer. In this case,
$\xb$ is isomorphic as a variety to projective space.

Recall that $\csm(\xa) = \sum_{\beta \le \alpha} \csm(\xbc)$. If true,
Conjecture \ref{conj:pos} would imply the following weaker conjecture.
 
\begin{conj} \label{conj:weak-pos} (Weak Positivity Conjecture)
For all partitions $\alpha \in \P{k}{n-k}$, the class $\csm(\xa)$  is positive in $A_* \GR{k}{n}$.
\end{conj}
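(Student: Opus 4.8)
By additivity of the CSM class, \eqref{eq:csm-formula-X}, it suffices to exhibit, for each $\alpha$, an expression for $\csm(\xa)$ that is visibly an effective class: on a Grassmannian a class is effective if and only if it is a non-negative combination of Schubert classes, since by Kleiman transversality the intersection number of an effective cycle with a general translate of an opposite Schubert variety is non-negative. Equivalently, the goal is to prove that all the coefficients $\gamma_{\alpha,\beta}$ of \eqref{eq:a-priori-csm} are non-negative. Of course Conjecture \ref{conj:pos} would suffice, but it is itself open for general $\alpha$, so one attacks this weaker statement directly.

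The effective building blocks are the pushforwards of Zelevinsky resolutions. Take, for each $\gamma\le\alpha$, the resolution $Z_\gamma$ associated with the order-reversing peak ordering. Then $\csm(Z_\gamma)$, pushed into the ambient partial flag variety, is effective by Proposition \ref{prop:pos-csm-Z}, and pushing it further down the tower of Grassmannian bundles of Section \ref{subsec:direct-pushforward} (pushforward of effective cycles is effective) shows that each $\pas\csm(\za)$, regarded in $A_*\GR{k}{n}$, is effective. Expanding $\pas\csm(\za)=\sum_{\beta\le\alpha}d_\alpha(\beta)\,\csm(\xbc)$ as in \eqref{eq:pistar-CSM-Z} (the integers $d_\alpha(\beta)$ are computable by Theorem \ref{thm:zel-formula}), inverting the upper-unitriangular matrix $D=(d_\alpha(\beta))$, and combining with additivity gives an identity
\[ \csm(\xa)=\sum_{\gamma\le\alpha}m_{\alpha,\gamma}\,(\pi_\gamma)_*\csm(Z_\gamma), \qquad (m_{\alpha,\gamma})=P\,D^{-1}, \]
where $P$ is the $0/1$ matrix with $P_{\alpha\gamma}=1$ exactly when $\gamma\le\alpha$. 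The problem is thus reduced to showing that this particular signed combination of effective classes is itself effective. If $PD^{-1}$ were entrywise non-negative we would be done; the plan when it is not is to show that the negative contributions cancel against positive ones, working with the explicit Fulton--Pragacz pushforward formulas of Section \ref{subsec:direct-pushforward} together with the interpretation (Remark \ref{rmk:D-lambda-mu-pos}) of the coefficients $D^N_{\lambda,\mu}$ as numbers of non-intersecting lattice paths, aiming ultimately at an explicit non-negative formula for each $\gamma_{\alpha,\beta}$. One has the additional freedom of choosing a different peak ordering $s$ for each $\gamma$, but exploiting it requires first extending Proposition \ref{prop:pos-csm-Z} to an arbitrary Zelevinsky resolution, which is itself nontrivial since for general $s$ the bundles $\brW_i/\bU_i$ do not extend to the ambient product of Grassmannians.

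The crux, and the main obstacle, is precisely controlling the signs introduced by inverting $D$: that matrix is unipotent but not monomial, its inverse has entries of mixed sign, and effectivity of $\csm(\xa)$ does not follow formally from effectivity of the $(\pi_\gamma)_*\csm(Z_\gamma)$. In the infinite family treated in Section \ref{subsec:weak_conj} (Theorem \ref{thm:weak-pos}) the relevant Zelevinsky resolutions are combinatorially simple enough that this cancellation can be carried out by hand, and in small codimension the methods of Section \ref{subsec:codimension-one} give a direct combinatorial reading of $\gamma_{\alpha,\beta}$; making either idea work for all $\alpha$ is open. One appealing reformulation of the missing input would be to produce, on some Zelevinsky resolution $\pa\colon\za\to\xa$, a non-negative constructible function $g$ supported on smooth closed subvarieties (so that $c_*(g)$ is effective) with $(\pa)_*g=\one_{\xa}$; then $\csm(\xa)=(\pa)_*c_*(g)$ would be a pushforward of effective classes, hence effective. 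Constructing such a $g$ in general is exactly the difficulty that remains.
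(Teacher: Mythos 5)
The statement you are asked about is a \emph{conjecture}: the paper does not prove it for general $\alpha$, and neither do you, so there is no complete argument on either side to compare. What the paper actually establishes is Theorem \ref{thm:weak-pos} (the case where $\alpha$ or its conjugate has the form $(b+p,p^a)$) together with the small-codimension statement of \S\ref{subsec:codimension-one}, and your proposal correctly identifies these as the extent of what is known. Moreover, the framework you set up is exactly the one the paper uses for its partial results: write $\csm(\xa)$, via additivity and the inverse of the unipotent matrix $D$ of fiber Euler characteristics, as a signed integer combination of the effective classes $(\pi_\gamma)_*\csm(Z_\gamma)$, and then try to show that the negative contributions are absorbed by positive ones. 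Your identification of the obstruction --- $D^{-1}$ has entries of mixed sign, so effectivity of $\csm(\xa)$ does not follow formally from Proposition \ref{prop:pos-csm-Z} --- is precisely where the paper's general argument stops. In the proved family the combination collapses (Lemma \ref{lem:weak-pos-formula}) to $\pas\csm(\za)-b\cdot\csm(\xb)$ with $\xb$ the rectangular (hence smooth) singular locus, and the cancellation is done by isolating inside $\pas\csm(\za)$ the explicit term $(b+1)\,\csm(X_{\alpha'})$ and checking $(b+1)\,\csm(X_{\alpha'})-b\,\csm(\xb)>0$; your sketch is the natural generalization of this and is, as you say, open.

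Two small corrections. First, your closing reformulation overclaims: for a non-negative constructible function $g=\sum_i n_i\one_{W_i}$ with $n_i\ge 0$ and the $W_i$ smooth and closed, $c_*(g)$ is \emph{not} automatically effective, because $c(TW)\cap[W]$ need not be effective for a smooth projective $W$ (already for a curve of genus at least $2$ the degree-zero part is negative); one would need the $W_i$ to have effective total Chern class, e.g.\ to be towers of Grassmannian bundles over homogeneous bases as the $Z_\gamma$ are. Second, since the conjecture is open your text should not sit inside a proof environment; as a statement of strategy and a summary of the known partial results it is accurate and consistent with what the paper does.
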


\subsection{Weak Positivity Conjecture}
\label{subsec:weak_conj}

In this section we prove the weak positivity conjecture for Schubert
varieties whose Zelevinsky resolutions are simple in a certain
sense. This includes the Schubert varieties with isolated
singularities as well as the $\xa \subset \GR{k}{n}$ where $k \le
2$. We are interested in the class of Schubert varieties described below because they represent a simple special case of Schubert varieties having a ``one step'' Zelevinsky resolution.

Consider a partition $\alpha$ such that either $\alpha$, or its conjugate $\tilde{\alpha}$, 
is of the form $(b+p, p^a)$ where $a,b,p > 0$. The diagram of such a partition (or its conjugate) has one long row (of length $b+p$) and all other rows are smaller of the same length ($a$ rows of length $p$). Let $\pap : \zap \to \xa$ be a small
resolution. Proposition \ref{prop:zel-chi-fiber} implies that the
singular locus of $\xa$ is the union of $\xbc$ where $d_{\alpha,
  s}(\beta) > 1$.

\begin{prop} \label{prop:gr-singular-locus}
The following are equivalent:
\begin{enumerate}
\item $\xa \subset \GR{k}{n}$ is a Schubert variety whose singular locus 
is non-empty and coincides with a sub-Grassmannian of $\GR{k}{n}$. 
\item $\alpha$, or its conjugate, is of the form $( b+p, p^a)$ where 
$a,b,p > 0$.
\end{enumerate}
\end{prop}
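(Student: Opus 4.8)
The plan is to prove the equivalence by analyzing the combinatorial structure of the singular locus of a Schubert variety in the Grassmannian via the criterion of Proposition \ref{prop:zel-chi-fiber}. That proposition tells us that $U \in \xbc$ is a singular point of $\xa$ precisely when the relative depth vector $(c_0, \ldots, c_m)$ of the pair $(\alpha, \beta)$ is not identically zero. So the singular locus is the union of those $\xbc$ (equivalently the Schubert variety $\Cl{\xbc}$ for the \emph{maximal} such $\beta$, since the singular locus is closed and $B$-stable) whose relative depth vector has a positive entry. The statement then amounts to: the singular locus is a single sub-Grassmannian if and only if $\alpha$ or $\tilde\alpha$ has the form $(b+p, p^a)$.

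First I would recall (or make precise) what a sub-Grassmannian inside $\GR{k}{n}$ looks like as a Schubert variety: a Schubert variety $\Cl{X_\beta}$ is itself a Grassmannian precisely when its singular locus is empty \emph{and} it is defined by intersection conditions forcing $U$ to lie between two fixed subspaces, i.e. $\beta$ is a rectangular partition (a single peak in peak notation, $\alpha = [a; b]$). So the right-hand condition I ultimately need to check is: the maximal $\beta$ with nonzero relative depth vector is rectangular, \emph{and} that singular locus is non-empty. For the direction (2) $\Rightarrow$ (1): when $\alpha = (b+p, p^a)$, the partition has exactly two peaks, $\alpha = [a, 1; p, b]$ (in peak notation, with the conventions $a_0 = b_m = \infty$ handled appropriately), and Zelevinsky's construction gives a one-step resolution $\zap \to \xa$; I would compute directly, using Theorem \ref{thm:zel-formula} and the binomial coefficients there, that $d_{\alpha,s}(\beta) > 1$ exactly for those $\beta$ whose relative depth vector is $(0, c_1, 0)$ with $0 < c_1 < \min(a+1, b+1)$ or similar — and that the maximal such $\beta$ is rectangular, so the singular locus is a sub-Grassmannian; non-emptiness follows since $a, b, p > 0$ forces at least one valid positive $c_1$. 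The conjugate case follows because conjugation of partitions is induced by the canonical isomorphism $\GR{k}{n} \cong \GR{n-k}{n}$, which carries Schubert varieties to Schubert varieties and preserves singular loci.

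For the harder direction (1) $\Rightarrow$ (2), I would argue by contrapositive: if $\alpha$ is neither rectangular nor of the form $(b+p, p^a)$ up to conjugation, then $\alpha$ (in peak notation) either has three or more peaks, or has two peaks $[a_1, a_2; b_0, b_1]$ with $a_1 > 1$ and $a_2 > 1$ (i.e. is genuinely "fat" at both peaks, so is not a $(b+p,p^a)$-shape or its conjugate). In either case I want to exhibit \emph{two} distinct maximal singular strata, or a single maximal singular stratum that is not rectangular, so that the singular locus cannot be a single sub-Grassmannian. Concretely: with $\geq 3$ peaks, one gets independent "depressions" each of which can be perturbed separately, yielding singular strata $\xbc$ for two incomparable $\beta$'s (e.g. relative depth vectors $(\ldots, 1, 0, \ldots)$ supported at different indices), so the singular locus is reducible. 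With two peaks both of multiplicity $> 1$, the maximal singular $\beta$ turns out to have a relative depth vector forcing two distinct inclusion conditions, hence $\Cl{X_\beta}$ is not a Grassmannian (it is singular itself, or at least not defined by a single pair of fixed flag subspaces).

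The main obstacle I anticipate is the bookkeeping in the (1) $\Rightarrow$ (2) direction: translating "the singular locus is a sub-Grassmannian" into a clean statement about the maximal $\beta$ and then controlling, via the recursion in Theorem \ref{thm:zel-formula}, exactly which relative depth vectors produce $d_{\alpha,s}(\beta) > 1$ for general $\alpha$. It may be cleaner to bypass Zelevinsky's formula here and instead invoke the known classical description of the singular locus of a Grassmannian Schubert variety (its irreducible components are indexed by the "inner corners" of the Young diagram of $\alpha$, and each component is itself a Schubert variety determined by a box-removal), reducing the problem to the purely combinatorial assertion that $\alpha$ has exactly one inner corner and the associated component is rectangular iff $\alpha$ or $\tilde\alpha$ equals $(b+p, p^a)$ — which is an elementary, if slightly fiddly, check on Young diagrams.
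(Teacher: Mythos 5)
Your overall strategy is the same as the paper's: identify singular points via Proposition \ref{prop:zel-chi-fiber} (nonzero relative depth vector), use $B$-stability to see that a singular locus equal to a sub-Grassmannian must be a Schubert variety $\xb$ with $\beta$ rectangular, and then reduce to combinatorics of peak diagrams; your (2)$\Rightarrow$(1) computation and the reduction of the conjugate case via $\GR{k}{n}\cong\GR{n-k}{n}$ are fine. The genuine gap is in the case analysis for (1)$\Rightarrow$(2). In the paper's peak notation $[a_1,a_2;b_0,b_1]$, so $\alpha=((b_0+b_1)^{a_2},\,b_0^{a_1})$, the unique singular component is $\xb$ with $\beta=((b_0+b_1)^{a_2-1},(b_0-1)^{a_1+1})$, and the complement of condition (2) among two-peak partitions is $a_2\ge 2$ \emph{and} $b_0\ge 2$ (largest part repeated \emph{and} smallest part at least $2$), not ``$a_1>1$ and $a_2>1$.'' Your dichotomy therefore fails in both directions: for $\alpha=(2,2,1,1)$ one has $a_1=a_2=2$, so you would try to prove its singular locus is not a sub-Grassmannian, but the singular locus is the rectangular $X_{(2)}$ and indeed $\tilde{\alpha}=(4,2)$ is of the form $(b+p,p^a)$; conversely $\alpha=(3,3,2)$ (with $a_1=1$, $a_2=2$, $b_0=2$) falls into neither of your two cases, yet it is self-conjugate, hence not in family (2), and must be handled --- there $\beta=(3,1,1)$ is non-rectangular, exactly as your argument needs. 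Restating the split in terms of $a_2$ and $b_0$ repairs the proof, and then it coincides with the paper's, which compresses the same point into: the maximal $\beta$ with nonzero depth vector must be rectangular with depth vector $(0,1,0)$, forcing a single interior depression of height one.

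A smaller inaccuracy: in your fallback via the classical description of the singular locus, the component attached to an inner corner is obtained by removing the full hook at that corner (equivalently, it is the maximal $\beta$ with depth $1$ at that depression), not by removing a single box; with that reading, your proposed reduction to a Young-diagram check is correct and is essentially the paper's argument in different language.
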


\begin{proof}
  The singular locus of $\xa$ is $B$-stable so the hypothesis implies
  that the singular locus is equal to $\xb$ where $\beta$ is a
  rectangular sub-partition of $\alpha$. In light of Proposition
  \ref{prop:zel-chi-fiber}, the depth vector of $\beta$ relative to
  $\alpha$ must be $(0,1,0)$ (i.e. $\alpha$ must have a single
  interior depression whose height above $\beta$ is $1$). Thus $\alpha$ has the form stated in (2). 
  
  To see that (2) implies (1), use Proposition \ref{prop:zel-chi-fiber} and the fact that the largest sub-partition $\beta$ of $\alpha$ with non-zero depth vector is the rectangle $\beta = ((p-1)^{a})$.
\end{proof}

\begin{rmk} The Schubert varieties described in the proposition
  include those with isolated singularities. This is the case when the
  sub-Grassmannian is a point and $\alpha = (b+1, 1, \ldots, 1)$ for
  some $b > 0$. Also included are the singular Schubert varieties of
  $\GR{2}{n}$.
\end{rmk}

The following theorem provides evidence for the weak positivity conjecture. We
note that the special case included here of Schubert varieties in $\GR{2}{n}$ is implied by the corresponding Schubert cell positivity result of Aluffi-Mihalcea (see \cite{AM}, \S4.3-4.4).

\begin{thm} \label{thm:weak-pos}
Let $\alpha$ (or its conjugate) be of the form $(b+p, p^a)$ for integers $b,p,a > 0$. Then, $\csm(\xa)$ is positive.
\end{thm}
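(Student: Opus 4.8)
The plan is to prove positivity of $\csm(\xa)$ by exploiting the extremely simple geometry of the Zelevinsky resolution of $\xa$ when $\alpha$ (or $\tilde\alpha$) has the shape $(b+p,p^a)$. By Proposition \ref{prop:gr-singular-locus} such an $\alpha$ has exactly one interior depression, so a suitable permutation $s$ gives a ``one step'' resolution: the peak diagram has only two peaks, and removing one peak yields a partition with a single peak, i.e. a sub-Grassmannian. Concretely, $\zap$ is then of the form described in \S\ref{subsec:direct-pushforward} with $m=2$, so $Z = \zap \subset \fl(k_2,k_1;V)$ is cut out by two incidence relations $V^{2-i}\subset U_i \subset U_{i-1}$. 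First I would write down $\csm(Z) = c(TZ)\cap[Z]$ explicitly using Theorem \ref{thm:c-tangent-bundle} (equivalently Corollary \ref{cor:alt-c-tangent-bundle}) for this two-factor case, and the fundamental class $[Z]\in A_*\fl$ using Proposition \ref{prop:za-fundamental-class}. All the relevant bundles are built from $\bU_1,\bU_2$ and the trivial bundles $\bV, \bV^1$, so everything reduces to Schur polynomials in $\bU_1^\vee$ and in the quotients $\bU_0/\bU_1 = \bV/\bU_1$ and $\bU_1/\bU_2$.

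The next step is to push this class forward to $\GRb{k}{V}=\GRb{k_2}{V}$ along $\psi = \pr_2|_\fl$ and show that every coefficient in the Schubert basis is non-negative. Here I would use Proposition \ref{prop:pos-csm-Z}, which already asserts that $i_*\csm(Z)$ is a positive class in $A_*\fl$ with each term of the Corollary \ref{cor:alt-c-tangent-bundle} decomposition non-negative. What remains is to see that pushing forward along $\psi$ preserves positivity. Since $\psi$ is a proper map and the classes $s_\nu(\bU_{i-1}/\bU_i)$ and $s_\nu(\bU_i^\vee)$ occurring here are dual to effective cycles (Schubert classes, resp. Chern classes of globally generated bundles), and $\fl$ carries a transitive group action so that proper pushforward of an effective class is effective, the pushforward $\psi_*(i_*\csm(Z)) = \pas\csm(\za)$ is again effective, i.e. a non-negative combination of Schubert classes in $\GRb{k}{V}$. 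Because the resolution $\pap$ is small and $\IC{\xa}$ has irreducible characteristic cycle (Theorems \ref{thm:bfl} and \ref{thm:zel-thm}), Theorem \ref{thm:cm-xa} identifies $\pas\csm(\za)$ with $\CM(\xa)$; however for the present statement I need $\csm(\xa)$ rather than $\CM(\xa)$. I would instead go back to formula \eqref{eq:csm-formula-grassmannian}: $\csm(\xa) = \sum_{\beta\le\alpha}\csm(\xbc)$, and each $\csm(\xbc)$ is expressed via \eqref{eq:csm-formula-general} in terms of the pushforwards $\pas\csm(\zb)$ with coefficients $\eab$. For the partitions $\beta\le\alpha$ with $\alpha$ of this special shape, the matrix $(\dab)$ has a particularly simple form — indeed Proposition \ref{prop:zel-chi-fiber} shows the only $\beta$ with $\dab>1$ are those whose relative depth vector is $(0,1,0)$, and Zelevinsky's formula (Theorem \ref{thm:zel-formula}) then evaluates $\dab$ as a single binomial-type number. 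The plan is to compute the inverse matrix $(\eab)$ explicitly in this case and verify directly that $\csm(\xa) = \sum_{\beta\le\alpha}\sum_{\gamma\le\beta}\eab\,\pbs\csm(\zb)$ has non-negative Schubert coefficients. An alternative, cleaner route: since $\csm(\xa)=\sum_{\beta}\csm(\xbc)$ is a telescoping sum and $\csm(\xa)-\csm(\xb)=\csm(\xac)+(\text{lower})$, one can reorganize the sum so that it becomes a single pushforward $\pas\csm(\za)$ plus corrections coming from the (finitely many, rectangular) singular strata, and show each correction is itself a non-negative combination by an induction on $\dim\alpha$, the base cases being the smooth Schubert varieties where $\csm = \pas\csm(Z)$ is manifestly effective by Proposition \ref{prop:pos-csm-Z}.

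The main obstacle I anticipate is the bookkeeping in the last step: controlling the signs of the coefficients $\eab$ (the inverse of the $d$-matrix) and ensuring that when the telescoping sum $\csm(\xa)=\sum_{\beta\le\alpha}\csm(\xbc)$ is reassembled into pushforwards of resolution classes, the net combination is effective rather than merely $\CM(\xa)$ being effective. The key simplifying features I would lean on are: (i) $\alpha$ has only one interior depression, of height one, so the singular locus is a single sub-Grassmannian and the combinatorics of the $\beta\le\alpha$ with $\dab>1$ is essentially one-dimensional; (ii) Zelevinsky's recursion collapses to one step, making $d_{\alpha,s}(\beta)$ a product of at most two binomial coefficients times a $d$-value for a Grassmannian (where it is $1$); and (iii) Proposition \ref{prop:pos-csm-Z} already does the hard positivity work at the level of $Z$, so only the behavior of positivity under the explicit Gysin pushforward of Proposition \ref{prop:direct-pushforward} and under the linear-algebra recombination needs to be checked. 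I expect the cleanest writeup handles the two cases $\csm(\xa)$ with $\alpha$ smooth and $\alpha$ singular separately, using Proposition \ref{prop:zel-chi-fiber} to pin down exactly which $\dab$ exceed $1$.
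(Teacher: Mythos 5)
Your setup matches the paper's framework: the two--peak (``one step'') Zelevinsky resolution, the explicit class $\csm(\za)=c(T\za)\cap[\za]$ via Theorem \ref{thm:c-tangent-bundle} and Proposition \ref{prop:za-fundamental-class}, and positivity of $i_*\csm(\za)$ together with effectivity of its pushforward (Proposition \ref{prop:pos-csm-Z}). You also correctly observe that this only gives positivity of $\pas\csm(\za)$ (essentially the Chern--Mather class), not of $\csm(\xa)$. The gap is in how you propose to bridge that difference. For this $\alpha$ the resolution is an isomorphism off $\xb$, $\beta=((p-1)^{a+1})$, and has fiber $\CP^b$ over $\xbc$, so functoriality gives exactly $\csm(\xa)=\pas\csm(\za)-b\,\csm(\xb)$ (the paper's Lemma \ref{lem:weak-pos-formula}); your inverse-matrix or telescoping reorganization would land on this same identity. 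But the correction enters with a \emph{negative} sign, so your stated strategy --- ``show each correction is itself a non-negative combination'' --- points the wrong way: non-negativity of $b\,\csm(\xb)$ only makes the subtraction more threatening, and an induction on dimension does not help because the problem is a cancellation inside one identity, not a reduction to smaller cases. What is needed, and what the proposal does not supply, is a mechanism forcing $\pas\csm(\za)$ to dominate $b\,\csm(\xb)$ coefficient by coefficient; your fallback ``verify directly that the coefficients are non-negative'' is a restatement of the theorem, not a proof.

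The paper's mechanism is the content of Lemma \ref{lem:weak-pos-pushforward} and the multiplicity count in the proof of Theorem \ref{thm:weak-pos}: in the factorization \eqref{eq:weak-pos-ctza} of $c(T\za)$, the class $s_{\tilde{(b)}}(\bV/\bU_1)$ occurs in the expansion of the second factor with multiplicity exactly $m=b+1$ (one occurrence for each pair $\mu=\lambda\subset 1\times b$), and the resulting term $T$, multiplied by $[\za]$ and pushed forward by the Fulton--Pragacz formula, equals $\csm(X_{\alpha'})$ where $\alpha'$ is the rectangle obtained by removing the rightmost peak. One then checks $\csm(X_{\alpha'})-\csm(\xb)>0$ (both varieties are smooth sub-Grassmannians, so this is a direct comparison of $D^N_{\lambda,\mu}$ coefficients). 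Since the remaining part $T'$ of $i_*\csm(\za)$ is non-negative by Proposition \ref{prop:pos-csm-Z}, the inequality $m>b$ gives $\csm(\xa)=m\,\pi_*T+\pi_*T'-b\,\csm(\xb)>0$. Without an analogue of this ``multiplicity $b+1$'' identification and the comparison $\csm(X_{\alpha'})\ge\csm(\xb)$, your outline stalls exactly at the subtraction step, so as written it does not yet constitute a proof.
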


The remainder of this section is devoted to proving the theorem.

By \emph{mutatis-mutandis} (see \cite{AM}, \S1), the set of coefficients occuring
in $\csm(\xa)$ is the same as the set for
$\csm(X_{\tilde{\alpha}})$. Hence positivity of $\csm(\xa)$ implies
positivity of $\csm(X_{\tilde{\alpha}})$ and vice-versa. Without loss
of generality, let $\alpha = (b+p, p^a)$ for some $a,b,p > 0$. Let
$X_\beta$ be the singular locus of $\xa$, i.e.  $\beta = ((p-1)^{a+1})$.

As in \S\ref{subsec:direct-pushforward} we consider Zelevinsky resolutions which
correspond to the order reversing permutation. Let $\pa : \za \to \xa$
denote this resolution of $\xa$. 

\begin{lem}
\label{lem:weak-pos-formula}
$$\csm(\xa) = \pas \csm(\za) - b \cdot \csm(X_{\beta}) .$$
\end{lem}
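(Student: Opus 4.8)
The plan is to compute $\pas\csm(\za)$ directly from the pushforward identity \eqref{eq:pistar-CSM-Z}, which in the Grassmannian reads
\[ \pas\csm(\za)=\sum_{\beta'\le\alpha}d_{\alpha,s}(\beta')\cdot\csm(X^\circ_{\beta'}), \]
where $s$ is the order-reversing permutation and $d_{\alpha,s}(\beta')=\Chi(\pa^{-1}(U))$ for $U\in X^\circ_{\beta'}$. As in the surrounding discussion we take $\alpha=(b+p,p^a)$, so that $\xa=\{U:\dim(U\cap V^1)\ge a,\ U\subset V^2\}$ for the fixed flag steps $V^1\subset V^2$ occurring in its definition, and $\za$ is the order-reversing Zelevinsky resolution described explicitly in \S\ref{subsec:direct-pushforward}. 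The heart of the matter is that $d_{\alpha,s}(\beta')$ takes only the values $1$ and $b+1$, with $b+1$ occurring exactly over the cells of the singular locus $\xb$; granting this, splitting the sum and using additivity of the \tCSM class finishes the argument.

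To establish the dichotomy, observe first that for any $U\in\xa$ the relative depth vector of the pair $(\alpha,\beta')$ has the form $(0,c,0)$ with $c=\dim(U\cap V^1)-a$, and since $\dim U=a+1$ we have $0\le c\le1$: namely $c=0$ when $U\not\subset V^1$ and $c=1$ when $U\subset V^1$. By Proposition \ref{prop:gr-singular-locus} (together with the identification $\xb=\{U\in\xa:U\subset V^1\}$, which is immediate from the flag conditions) the case $c=1$ is precisely the singular locus $\xb$ and $c=0$ the smooth locus. Next one computes $d_{\alpha,s}(\beta')$: applying Zelevinsky's formula (Theorem \ref{thm:zel-formula}) with $j=s(1)$ the last peak, the single peak removal turns $\alpha$ into the rectangle $\alpha'=(p^{a+1})$ — whose Schubert variety is smooth, so $d_{\alpha',s'}(\gamma(0))=1$ — and only the $d=0$ summand survives, yielding
\[ d_{\alpha,s}(\beta')=\binom{b+c}{c}=\begin{cases}1&\text{if }c=0,\\ b+1&\text{if }c=1.\end{cases} \]
Equivalently, from the explicit description of $\za$ in \S\ref{subsec:direct-pushforward} the fibre of $\pa$ over $U$ is the Grassmannian of intermediate subspaces $\langle V^1,U\rangle\subset W\subset V^2$ of the appropriate dimension, which is $\GR{c}{b+c}$, of Euler characteristic $\binom{b+c}{c}$.

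Assembling, split \eqref{eq:pistar-CSM-Z} according to whether $X^\circ_{\beta'}\subset\xb$ or $X^\circ_{\beta'}$ lies in the smooth locus of $\xa$; since $d_{\alpha,s}(\beta')=b+1$ in the first case and $1$ in the second, and using additivity $\csm(\xa)=\sum_{\beta'\le\alpha}\csm(X^\circ_{\beta'})$ and $\csm(\xb)=\sum_{\beta'\le\beta}\csm(X^\circ_{\beta'})$, we obtain
\[ \pas\csm(\za)=\sum_{\beta'\le\alpha}\csm(X^\circ_{\beta'})+b\sum_{\beta'\le\beta}\csm(X^\circ_{\beta'})=\csm(\xa)+b\cdot\csm(\xb), \]
and rearranging is the claimed identity. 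The one genuine obstacle is the middle paragraph: verifying that the relative depth never exceeds $1$ and is attained precisely on the singular sub-Grassmannian $\xb$, so that the correction term collapses to the single class $b\cdot\csm(\xb)$; once that is in hand the rest is the formal combination of \eqref{eq:pistar-CSM-Z} with additivity.
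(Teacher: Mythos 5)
Your proof is correct and is essentially the paper's argument: both rest on functoriality of $c_*$ together with the observation that $\pa$ is one-to-one off $\xb$ and has fibers with Euler characteristic $b+1$ (indeed $\CP^b$, or $\GR{1}{b+1}$ in your description) over $\xb$. The only difference is presentational — the paper records this at once as $\pas \one_{\za} = \one_{\xa} + b\cdot\one_{\xb}$, while you spell out the same constructible-function pushforward cell by cell via \eqref{eq:pistar-CSM-Z} and Zelevinsky's formula and then re-sum by additivity.
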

\begin{proof}
This is a straightforward application of functoriality of
$c_*$. Indeed, by Theorem \ref{prop:zel-chi-fiber}, $\pa$ is a bijection over
$\xa \backslash \xb$ and has fiber isomorphic to $\CP^b$ over
$\xb$. Thus 
$$\pas \one_{\za} = \one_{\xa} + (\Chi(\CP^b)-1) \cdot \one_{\xb} = 
\one_{\xa} + b \cdot \one_{\xb}$$
and hence by functoriality,
$$\csm(\xa) = \pas \csm(\za) - b \cdot \csm(\xb) .$$
\end{proof}

Let
$\alpha'$ denote the partition obtained by removing the rightmost peak
from $\alpha$. In peak notation we have $\alpha = [a, 1; p,
b]$ and $\alpha' = [a+1; p]$.

Recall the setup from \S~\ref{subsec:direct-pushforward}. We have $i :
\za \into \fl$, where $\fl$ is a certain partial flag variety. Let
$\pi$ denote the projection $\pi : \fl \to \GR{k}{n}$. Also recall 
the expression of $\csm(\za)$ from Theorem
\ref{thm:c-tangent-bundle}:
\begin{equation} \label{eq:weak-pos-ctza} 
\csm(\za) = \left( c( \bU_2^\vee \otimes (\bU_1/\bU_2) ) \cdot c(
  (\bU_1/\bV^{a+p+1})^\vee \otimes (\bV/\bU_1) ) \right) \cap [\za]
\end{equation}

The following lemma shows that the pushforward of a certain term
occurring in $i_* \csm(\za)$ to $A_* \GR{k}{n}$ is the CSM class of
$X_{\alpha'}$.

\begin{lem} \label{lem:weak-pos-pushforward} 
The pushforward 
\begin{equation}
  \label{eq:weak-pos-pushf-term} \pi_* \left( c( \bU_2^\vee \otimes
    (\bU_1/\bU_2)) \cdot s_{\tilde{(b)}}( \bV/\bU_1 ) \cdot s_{b
      \times (a+p)}( \bV/\bU_1 ) \cap [\fl] \right) 
\end{equation}
equals $\csm(X_{\alpha'})$ in $A_* \GR{k}{n}$.
\end{lem}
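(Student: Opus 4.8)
The plan is to recognize the pushforward term \eqref{eq:weak-pos-pushf-term} as a pushforward of the CSM class of a \emph{smaller} Zelevinsky resolution, namely the one-step resolution of $X_{\alpha'}$, and then invoke functoriality to conclude. The partition $\alpha' = [a+1;p]$ is rectangular, so $X_{\alpha'}$ is smooth (it is itself a sub-Grassmannian), and its ``Zelevinsky resolution'' for the order-reversing permutation is the identity map on $X_{\alpha'} = \GRb{k}{V}$ cut out by a single incidence relation $V^{a+p} \subset U_1$. Concretely, I would first identify the factor $c(\bU_2^\vee \otimes (\bU_1/\bU_2))$ as the Chern class of the relative tangent bundle of the Grassmannian bundle $Z_\alpha \to Z_\alpha^{(1)}$ (cf.\ the proof of Theorem~\ref{thm:c-tangent-bundle}), so that $c(\bU_2^\vee \otimes (\bU_1/\bU_2)) \cap [Z_\alpha]$ pushed forward along $Z_\alpha \to Z_\alpha^{(1)}$ recovers $c(TZ_\alpha^{(1)})\cap[Z_\alpha^{(1)}]$ by the projection formula and the multiplicativity of Chern classes in the exact sequence $0 \to \phi^{-1}TZ_\alpha^{(1)} \to TZ_\alpha \to T_{Z_\alpha/Z_\alpha^{(1)}} \to 0$ — but this needs the fact that the Euler characteristic of the Grassmannian fiber is $1$ only after we also account for the $\CP^b$ fibers, so more care is needed (see below).

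The cleaner route is to match \eqref{eq:weak-pos-pushf-term} directly against formula \eqref{eq:csm-formula-grassmannian} applied to $X_{\alpha'}$. I would argue: the class $s_{\widetilde{(b)}}(\bV/\bU_1)\cdot s_{b\times(a+p)}(\bV/\bU_1)$ is, up to the combinatorial reduction of products of Schur classes, exactly the fundamental-class term that expresses $[Z_\alpha^{(1)}]$ inside the flag bundle $\fl(k_1;V) = \GRb{k_1}{V}$ via Proposition~\ref{prop:za-fundamental-class} (here $Z_\alpha^{(1)}$ is the sub-Grassmannian $\{U_1 : V^{a+p+1} \subset U_1\}$, which is precisely the resolution $Z_{\alpha',w_0}$ of $X_{\alpha'}$). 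With that identification, \eqref{eq:weak-pos-pushf-term} becomes $\pi_*\bigl( c(TZ_{\alpha'})\cap[Z_{\alpha'}]\bigr) = \pas[\alpha'] \csm(Z_{\alpha'})$, and since $Z_{\alpha'} \to X_{\alpha'}$ is a birational morphism onto $X_{\alpha'}$ (indeed an isomorphism, as $X_{\alpha'}$ is smooth), functoriality of $c_*$ gives $\pas[\alpha']\csm(Z_{\alpha'}) = \csm(X_{\alpha'})$. The bookkeeping here is the Littlewood--Richardson reduction of $s_{\widetilde{(b)}}(\bV/\bU_1)\cdot s_{b\times(a+p)}(\bV/\bU_1)$ into a single Schur class (cf.\ Remark~\ref{rmk:pushforward}), together with checking that the resulting partition is the one predicted by Proposition~\ref{prop:direct-pushforward}.

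The main obstacle I expect is the combinatorial verification that the two Schur-class factors in $\bV/\bU_1$ — the one coming from the $c(TZ_\alpha)$ factor associated to the $\bU_1$-slot and the one $s_{b\times(a+p)}(\bV/\bU_1)$ coming from the rectangular correction $\leftclass{1}$ in $[Z_\alpha]$ — combine to exactly reproduce the Chern-class-of-tangent-bundle-times-fundamental-class data for the one-step resolution $Z_{\alpha'}$, with the correct rank bookkeeping for the dual/conjugate operations $\widetilde{\mathbb{D}}$ appearing in Corollary~\ref{cor:alt-c-tangent-bundle}. This is a finite but slightly intricate manipulation; once it is in place, the functoriality argument is immediate. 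An alternative way to sidestep part of the combinatorics is to work entirely at the level of constructible functions: the composite $Z_\alpha \to Z_\alpha^{(1)} = Z_{\alpha'} \to X_{\alpha'}$ pushes $\one_{Z_\alpha}$ to $\one_{X_{\alpha'}}$ plus boundary corrections supported on the small locus, and one then peels off the contribution of \eqref{eq:weak-pos-pushf-term} as the ``leading'' piece; but making ``leading piece'' precise still amounts to the same Schur-class identity, so I would present the direct computation.
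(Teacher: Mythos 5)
Your proposal has a genuine gap, and the identifications it rests on are incorrect. You assert that $s_{\tilde{(b)}}(\bV/\bU_1)\cdot s_{b\times(a+p)}(\bV/\bU_1)$ is ``the fundamental-class term that expresses $[Z^{(1)}]$ inside $\GRb{k_1}{V}$'' and that $Z^{(1)}$ ``is precisely the resolution $Z_{\alpha',w_0}$ of $X_{\alpha'}$''. Neither is true. For $\alpha=(b+p,p^a)$ and the order-reversing permutation one has $k_1=a+p+1$, $\lW_1=V_{a+p}$, $\rW_1=V$, so $Z^{(1)}=\{U_1:\,V_{a+p}\subset U_1\subset V\}\cong\CP^b$, whereas $Z_{\alpha',w_0}\cong X_{\alpha'}\cong\GR{a+1}{a+p+1}$ has dimension $(a+1)p$ and sits inside $\GRb{a+1}{V}$, not $\GRb{k_1}{V}$ (also, $X_{\alpha'}$ is cut out by $U\subset V_{a+p+1}$, not by $V^{a+p}\subset U_1$). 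Moreover, by Proposition \ref{prop:za-fundamental-class} the class $[Z^{(1)}]$ is given by the single factor $s_{b\times(a+p)}(\bV/\bU_1)$; the product you point to equals $s_{b\times(a+p+1)}(\bV/\bU_1)$ (since $\rk\,\bV/\bU_1=b$), which by Thom--Porteous is the class of the single point $U_1=V_{a+p+1}$ in $\GRb{k_1}{V}$ --- a locus of a completely different nature from $Z^{(1)}$. Because of this, the reduction of \eqref{eq:weak-pos-pushf-term} to $\pi_*\bigl(c(TZ_{\alpha'})\cap[Z_{\alpha'}]\bigr)$ is never established, and the step you postpone as ``a finite but slightly intricate manipulation'' is in fact the entire content of the lemma; deferring it leaves the proof empty. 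The paper proves the statement by exactly that computation: combine the two rectangles into $s_{b\times(a+p+1)}(\bV/\bU_1)$, expand $c(\bU_2^\vee\otimes(\bU_1/\bU_2))$ via Lemma \ref{lem:schur-ctz}, push forward along the Grassmannian bundle $\fl\to\GR{k}{n}$ (with $\bE=\bV/\bU_2$, $\bS=\bU_1/\bU_2$, $\bQ=\bV/\bU_1$) using Theorem \ref{thm:pushforward-formula}, and match the result term by term with the expansion of $c(TX_{\alpha'})\cap[X_{\alpha'}]$ coming from the smoothness of $X_{\alpha'}$ and Thom--Porteous for $[X_{\alpha'}]=s_{b\times(a+1)}(\bV/\bU)\cap[\GR{k}{n}]$.

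There is a correct geometric version of your idea, but it runs through $\fl$ rather than $\GRb{k_1}{V}$: pulled back to $\fl$, the class $s_{b\times(a+p+1)}(\bV/\bU_1)\cap[\fl]$ is the fundamental class of $W=\{(U_1,U_2)\in\fl:\,U_1=V_{a+p+1}\}$, which $\pi$ maps isomorphically onto $X_{\alpha'}$, and the remaining factor $c(\bU_2^\vee\otimes(\bU_1/\bU_2))$ restricts on $W$ to $c(TX_{\alpha'})$; the projection formula then yields $\csm(X_{\alpha'})$ with no Schur-calculus matching at all. That would be a genuinely different (and shorter) route than the paper's, but it is not what your write-up says: as written, the key locus is misidentified and the decisive verification is missing.
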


Note that the first factor of \ref{eq:weak-pos-pushf-term} is one of
the factors in the decomposition of $c(T\za)$ given by Theorem
\ref{thm:c-tangent-bundle}. The second two factors come from the
other part of $c(T\za)$ and the class $i_* [\za]$, respectively.
\begin{proof}[Proof (of Lemma \ref{lem:weak-pos-pushforward})]
First note that since $\rk \bV/\bU_1 = b$ we may combine the last two
factors in the left hand side expression as follows:
$$  s_{\tilde{(b)}}(
  \bV/\bU_1 ) \cdot s_{b \times (a+p)}( \bV/\bU_1 ) = 
s_{b \times (a+p+1)}(\bV/\bU_1) .$$
Using Lemma  \ref{lem:schur-ctz} to expand the first factor of 
\eqref{eq:weak-pos-pushf-term} we have:
$$\pi_* \left[ \left( \sum_{\mu \le \lambda \le (a+1) \times p} 
D^{a+1}_{\lambda, \mu} s_\mu(\bU_2^\vee) \cdot
s_{\TDual{\lambda}}(\bU_1/\bU_2) \cdot
s_{b \times (a+p+1)}(\bV/\bU_1) \right) \cap [\fl] \right].$$

We now apply the pushforward formula of Theorem
\ref{thm:pushforward-formula}. Note that in the special case we are
working with, $\pi : \fl \to \GR{k}{n}$ is a Grassmannian bundle with
associated vector bundles $\bE = \bV/\bU_2$, $\bS = \bU_1/\bU_2$, and
$\bQ = \bV/\bU_1$. Applying the pushforward and identifying $\bU_2 =
\bU$, the universal sub-bundle on $\GR{k}{n}$,
\eqref{eq:weak-pos-pushf-term} becomes:
\begin{equation} \label{eq:weak-pos-pushf-result}
\left( \sum_{\mu \le \lambda \le (a+1) \times p} 
D^{a+1}_{\lambda, \mu} s_\mu(\bU^\vee) \cdot s_{(b \times (a+1),
  \TDual(\lambda))}(\bV/\bU) \right) \cap [\GR{k}{n}] .
\end{equation}
Observe that $\lambda \le (a+1) \times p$ implies $\TDual(\lambda) \le
p \times (a+1)$. Thus, the sequence $( b \times (a+1), \TDual(\lambda)
)$ is in fact a partition. 

Now we compare the expression \eqref{eq:weak-pos-pushf-result} to the
class $\csm(X_{\alpha'})$. To compute $\csm(X_{\alpha'})$ note that
$\alpha'$ is rectangular and thus $X_{\alpha'}$ is smooth and
isomorphic to a Grassmannian. Let $\bU$ denote the universal
sub-bundle on $\GR{k}{n}$. In the embedding $X_{\alpha'} \into
\GR{k}{n}$ we identify the tangent bundle of $X_{\alpha'}$ with
$\bU^\vee \otimes (\bV^{a+p+1}/\bU)$ and by the Thom-Porteous formula,
$[X_{\alpha'}] = s_{b \times (a+1)}(\bV/\bU) \cap
[\GR{k}{n}]$. Expanding $c(TX_{\alpha'})$ we have:
$$ \csm(X_{\alpha'}) = \left( \sum_{\nu \le \eta \le (a+1) \times p} D^{a+1}_{\eta,
  \nu} s_\nu(\bU^\vee) \cdot s_{\TDual(\eta)}(\bV^{a+p+1}/\bU) \cdot 
s_{b \times (a+1)}(\bV/\bU) \right) \cap [\GR{k}{n}] .$$
Combining the last two factors (both are equal to Schur determinants in
the Segre classes of $\bV/\bU$ on
$\GR{k}{n}$) as in the first paragraph, the result follows:
\end{proof}

Now we can prove Theorem \ref{thm:weak-pos}.

\begin{proof}[Proof (of Threorem \ref{thm:weak-pos})]

  As noted above, the term which is pushed forward in Lemma
  \ref{lem:weak-pos-pushforward} is (up to an integer coefficient) a
  term occurring in the decomposition of $\csm(\za)$. Let 
$i_* \csm(\za) = m \cdot T + T'$, where $m \in \mathbb{Z}_{\ge 0}$,
$T$ is the term from the lemma  and $T'$ makes up the difference.

Proposition \ref{prop:pos-csm-Z} implies that $T'$ is a positive class in 
$A_* \fl$. Hence it is positive when pushed forward to $A_*
  \GR{k}{n}$. Using Lemma \ref{lem:weak-pos-formula} we have:
\begin{align*}
\csm(\xa) & = \pas \csm(\za) - b \cdot \csm(\xb) \\
          & = \pi_* i_* \csm(\za) - b \cdot \csm(\xb) \\
          & = m \cdot \pi_* T + \pi_* T' - b \cdot \csm(\xb)
\end{align*}
The idea is to show that $m \cdot \pi_* T -  b \cdot \csm(\xb)$ is
in fact positive. Since $\pi_* T' > 0$, the result follows.

We claim that $\pi_* T - \csm(\xb) > 0$ and $m > b$. From Lemma
\ref{lem:weak-pos-pushforward}, $\pi_* T = \csm(X_{\alpha'})$. Note
that both $\alpha'$ and $\beta$ are rectangular, and that $\beta <
\alpha'$, the difference being a single column. Then the fact that 
$\csm(X_{\alpha'}) - \csm(\xb) > 0$ is a routine exercise in comparing
the $D^N_{\lambda, \mu}$ coefficients. This is left to the reader.

The coefficient $m$ comes from occurrences of the term $t =
s_{\tilde{(b)}}(\bV/\bU_1)$ in the second factor of
$c(T\za)$ (from \eqref{eq:weak-pos-ctza}) which we expand:
$$ c( (\bU_1/\bV^{a+p+1})^\vee \otimes (\bV/\bU_1)) = \sum_{\mu \le
  \lambda \le 1 \times b} D^1_{\lambda, \mu}
s_\mu((\bU_1/\bV^{a+p+1})^\vee) s_{\TDual(\lambda)}(\bV/\bU_1) .$$ Observe that
the term $t$ occurs in this expression precisely when $\mu = \lambda$. For each such pair $(\mu, \lambda)$, $D^1_{\lambda,
  \mu} = 1$, thus we have
$$m = \sum_{\mu = \lambda \le 1 \times b} D^1_{\lambda, \mu} =
\sum_{i=0}^b 1 = b+1 .$$
Hence $m > b$ and the proof is concluded.


\end{proof}

\subsection{Coefficients with Small Codimension}
\label{subsec:codimension-one}

The methods developed in this paper can be used to prove that the coefficient of $\xb$ in $\csm(\xac)$ is positive when $\xac$ is arbitrary and $\xb$ has small codimension in $\xa$. We sketch the idea  of this proof for the codimension one case and give a combinatorial interpretation of the coefficient. We remark that the positivity results for small codimension may also be obtained using the formulas of \cite{AM}. 

Consider an arbitrary partition $\alpha$ and the class $\csm(\xac) = [\xa] +\sum_{\beta < \alpha} n_\beta [\xb]$. We want to calculate the coefficients $n_\beta$ when $\beta < \alpha$ and $\abs{\beta} = \abs{\alpha} - 1$. The first step is to expand $n_\beta$ in terms of (coefficients of) CSM classes of resolutions. Let $\pa: \za \to \xa$ and $\pb : \zb \to \xb$ be Zelevinsky resolutions as in \S\ref{subsec:weak_conj}. Using \ref{eq:csm-formula-grassmannian} and the fact that $\xa$ is smooth at points in $\xbc$ (see last paragraph of \S\ref{subsec:csm-by-resolution}), 
we have 
\[ n_\beta = [ \pas \csm(\za) ]_\beta - [ \pbs \csm(\zb) ]_\beta =
[ \pas \csm(\za) ]_\beta - 1 .\]
It suffices then to show that $[ \pas \csm(\za) ]_\beta > 1$. For reasons of dimension, we need only consider the contribution of $\pas( c_1(T\za) \cap [\za] )$. Using Theorem \ref{thm:c-tangent-bundle} this is a straightforward calculation: $c_1(T\za)$ is a sum of terms: \[ c_1(
(\bU_j/\blW_j)^\vee \otimes (\bU_{j-1}/\bU_j) ) = r_j \cdot c_1(
(\bU_j/\blW_j)^\vee ) + l_j \cdot c_1( \bU_{j-1}/\bU_j ) \] where the coefficients $r_j, l_j$ are non-negative. One can then calculate the contribution to $n_\beta$ of $\pas (c_1(T\za) \cap [\za])$ explicitly using 
Theorem \ref{thm:pushforward-formula}. To state the result precisely we need some notation. Let $\alpha = [a_1, \ldots, a_m ; b_0, \ldots, b_{m-1}]$ in peak notation. For $i = 1, \ldots, m$ let $\beta(i)$ denote the partition obtained by removing the top-most box from the $i$-th peak of $\alpha$ (see Figure \ref{fig:peak-diagram}).

 \begin{thm} \label{thm:codim-1-formula} With notation as above,
   \[ [ \pas \csm(\za) ]_{\beta(i)} = ( a_i + a_{i+1} + \cdots + a_m )
   + (b_0 + b_1 + \cdots + b_{i-1} ) > 1.\] In particular,
   \begin{equation} \label{eq:codim-one-strict-pos} [ \csm(\xac)
     ]_{\beta(i)} > 0.
   \end{equation}	
 \end{thm}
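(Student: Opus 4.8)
The plan is to prove the codimension-one formula by an explicit computation of the class $\pas(c_1(T\za)\cap[\za])$, pushed all the way down to $\GR{k}{n}$, using the tools already assembled. I will use the Zelevinsky resolution $\za=Z_{\alpha,w_0}$ corresponding to the order-reversing permutation, since \S\ref{subsec:direct-pushforward} gives a clean pushforward calculus for that case: $\za$ sits inside a partial flag variety $\fl=\fl(k_m,\ldots,k_1;V)$ with $i_*[\za]$ given by Proposition \ref{prop:za-fundamental-class} and with $c(T\za)$ given by Theorem \ref{thm:c-tangent-bundle}. For degree reasons, $[\pas\csm(\za)]_{\beta(i)}$ only sees the part of $\pas(c(T\za)\cap[\za])$ living in codimension one below $[\xa]$, i.e. $\pas(c_1(T\za)\cap[\za])$. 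So the whole computation reduces to identifying the coefficient of $[X_{\beta(i)}]$ in $\pi_*\bigl(c_1(T\za)\cdot i_*[\za]\bigr)$.

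The key steps, in order. First, expand $c_1(T\za)=\suml_{j=1}^m\bigl(r_j\, c_1((\bU_j/\blW_j)^\vee)+l_j\, c_1(\bU_{j-1}/\bU_j)\bigr)$ using Theorem \ref{thm:c-tangent-bundle} and additivity of $c_1$ on tensor products; note $c_1((\bU_j/\blW_j)^\vee)=-c_1(\bU_j)+c_1(\blW_j)$, and $\blW_j$ is either trivial or another $\bU_l$. Second, multiply by $i_*[\za]=\bigl(\prodl_{j}\leftclass{j}\bigr)\cap[\fl]$ and push forward via $\psi:\fl\to\GR{k}{n}$ using the iterated Fulton--Pragacz formula, Theorem \ref{thm:pushforward-formula}, exactly as packaged in Proposition \ref{prop:direct-pushforward}: each $c_1$-insertion perturbs one block $\Lambda_j$ of the resulting partition $\Lambda$ by a single box, and one reads off which blocks contribute a nonzero Schur class of $\bV/\bU$ of the right codimension. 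Third, match the resulting codimension-one Schur classes $s_\nu(\bV/\bU)$ against the Schubert classes $[X_{\beta(i)}]$ — here $\beta(i)$ is exactly $\alpha$ with one box removed from the $i$-th peak, so $[X_{\beta(i)}]$ corresponds to a partition differing from $\alpha$ in a single coordinate. Carefully bookkeeping the $r_j,l_j$ coefficients and the surviving terms, the coefficient of $[X_{\beta(i)}]$ collects precisely the $r_j$'s for $j\ge i$ (from the ``$\bU_{j-1}/\bU_j$'' type terms that can lower the relevant intersection condition) and the $b$-contributions $b_0+\cdots+b_{i-1}$ (from the ``$(\bU_j/\blW_j)^\vee$'' terms and the interplay with $\blW_j$ for $j\le i$), yielding $(a_i+\cdots+a_m)+(b_0+\cdots+b_{i-1})$ after translating $\suml_{j\ge i} r_j$ into the $a$-language via $r_j=a_{j}$-type identities coming from the peak data. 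Finally, since $m\ge 1$, $a_i\ge1$ and $b_0\ge1$ (all $a$'s and $b$'s are positive by definition of peak notation), the sum is $\ge a_i+b_0\ge 2>1$, and then \eqref{eq:codim-one-strict-pos} follows from $[\csm(\xac)]_{\beta(i)}=[\pas\csm(\za)]_{\beta(i)}-[\pbs\csm(\zb)]_{\beta(i)}=[\pas\csm(\za)]_{\beta(i)}-1>0$, using that $\xa$ is smooth along $X^\circ_{\beta(i)}$ (last paragraph of \S\ref{subsec:csm-by-resolution}) so the fiber of $\pb$ there is a point.

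The main obstacle I anticipate is the bookkeeping in the third step: correctly tracking, through the iterated pushforward of Proposition \ref{prop:direct-pushforward}, exactly which single-box perturbations of the blocks $\Lambda_j$ land on the partition for $\beta(i)$ rather than on some other $\beta$ with $|\beta|=|\alpha|-1$, and confirming that the ``cross terms'' $c_1(\blW_j)$ (when $\blW_j=\bU_l$ is itself universal) get attributed to the correct peak index. This is a finite but delicate combinatorial matching between the peak-notation description of $\alpha$, the bundle filtration $\bV^{\,\bullet}\subset\bU_\bullet$ defining $\za=Z_{\alpha,w_0}$, and the $r_j,l_j$ ranks; once the dictionary is set up it is a routine (if tedious) verification, but it is where all the genuine content lies. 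Everything else — the reduction to $c_1$, the pushforward mechanics, and the final positivity estimate — is immediate from the results already established above.
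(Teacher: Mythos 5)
Your proposal takes essentially the same route as the paper's own argument, which is itself only a sketch: choose the order-reversing Zelevinsky resolution, note $n_{\beta(i)}=[\pas\csm(\za)]_{\beta(i)}-1$ because $\xa$ is smooth along $X^\circ_{\beta(i)}$ and $\pb$ is an isomorphism there, reduce for degree reasons to $\pas(c_1(T\za)\cap[\za])$, expand $c_1$ via Theorem \ref{thm:c-tangent-bundle}, and push forward with the Fulton--Pragacz formula (Theorem \ref{thm:pushforward-formula}, Propositions \ref{prop:za-fundamental-class} and \ref{prop:direct-pushforward}); this is exactly the paper's reduction.

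One concrete caution about the bookkeeping you defer, since it is the substance of the stated equality. For $Z_{\alpha,w_0}$ every left bound $\blW_j$ is a trivial bundle $\bV^{m-j}$ (so the ``cross terms'' $c_1(\blW_j)=c_1(\bU_l)$ you worry about never occur), and the rank dictionary is the opposite of what you assert: one finds $l_j=a_{m-j+1}+\cdots+a_m$ and $r_j=b_{m-j}$ for $j\ge 2$ (with $r_1=(n-k-\alpha_1)+b_{m-1}$). Consequently the $a$-part $(a_i+\cdots+a_m)$ of the formula enters as the single coefficient $l_{m-i+1}$ on the term $c_1(\bU_{m-i}/\bU_{m-i+1})$, while the $b$-part $(b_0+\cdots+b_{i-1})$ is assembled from the $r_j$'s attached to the $(\bU_j/\blW_j)^\vee$ factors; your claim that ``$\sum_{j\ge i} r_j$ translates into the $a$-language via $r_j=a_j$-type identities'' is backwards, and followed literally it would not reproduce the displayed formula. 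With the corrected dictionary the computation does collect into $(a_i+\cdots+a_m)+(b_0+\cdots+b_{i-1})$, and your final positivity step ($a_i\ge1$, $b_0\ge1$, then subtract $1$) is the same as the paper's.
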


 \begin{rmk}
   The formula of Theorem \ref{thm:codim-1-formula} has a combinatorial
   interpretation.  Consider the partition diagram for $\alpha$.  The
   integer \[ ( a_i + a_{i+1} + \cdots + a_m ) + (b_0 + b_1 + \cdots +
   b_{i-1} ) \] is 1 more than the number of boxes in the ``anti-hook''
   whose corner is the box in the partition diagram of $\alpha$
   which is removed to produce $\beta(i)$. By the theorem, the number of boxes in the anti-hook
   is precisely the coefficient $[\csm(\xac)]_\beta$. See Example
   \ref{ex:hook} below for illustration.
 \end{rmk}

 \begin{ex} \label{ex:hook} 
   Suppose $\alpha = (5,5,2,1) = [1,1,2; 1,1,3]$. Let $\beta = \beta(2)$. The
   coefficient of $[\xb]$ in $\csm(\xac)$ is $((1+2)+(1+1))-1 = 4$. The anti-hook is
   depicted below in Figure \ref{fig:hook}.
   \begin{figure}[hbtp]
     \centering
     \includegraphics[scale=0.75]{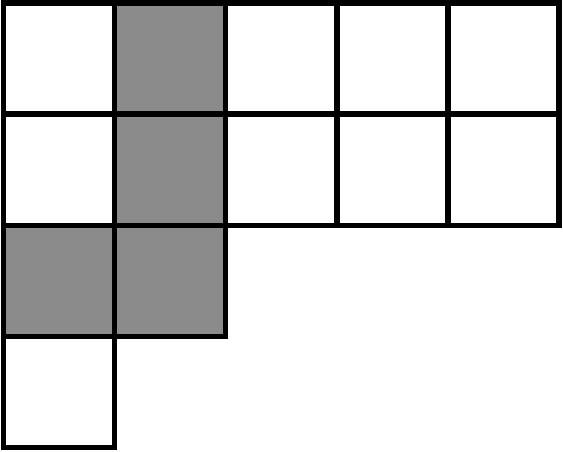}
     \caption{Anti-hook length counts codimension 1 coefficients}
     \label{fig:hook}
   \end{figure}
 \end{ex}


\newcommand{\bibnumfmt}[1]{[#1]} 
\bibliographystyle{bfjthesis}
\bibliography{thesis}          

\end{document}